\newtheorem{theorem}{Theorem}[section]
\newtheorem{proposition}[theorem]{Proposition}
\newtheorem{corollary}[theorem]{Corollary}
\newtheorem{lemma}[theorem]{Lemma}
\theoremstyle{definition}
\newtheorem{definition}[theorem]{Definition}
\newtheorem{example}[theorem]{Example}
\theoremstyle{remark}
\newtheorem{remark}[theorem]{Remark}
\numberwithin{equation}{section}
\DeclareMathOperator{\ga}{\textsl{g}}
\def\set#1{\left\{ #1 \right\}}
\def\rm#1{\mathrm{#1}}
\def\cal#1{\mathcal{#1}}
\tikzset{
  symbol/.style={
    draw=none,
    every to/.append style={
      edge node={node [sloped, allow upside down, auto=false]{$#1$}}}
  }
}
\begin{document}

\title[SRF and the prescribing scalar curvature problem]{Singular Riemannian Foliations and the prescribing scalar curvature problem}

	\author[Alexandrino]{Marcos M. Alexandrino}
	\address{ R. do Mat\~{a}o, 1010 - Butant\~{a}, S\~{a}o Paulo - SP, 05508-090.}
	\email{malex@ime.usp.br}
	
		\author[Cavenaghi]{Leonardo F. Cavenaghi}
		\address{University of Fribourg,
Departement of Mathematics,
Ch. du mus\' ee 23, CH-1700 Fribourg.}
	\email{leonardofcavenaghi@gmail.com, leonardo.cavenaghi@unifr.ch}

	%	\author[Ramos]{Gustavo P. Ramos}
	%\address{ R. do Matão, 1010 - Butantã, São Paulo - SP, 05508-090.}
	%\email{gustavopramos@gmail.com}
	
		\keywords{}
	
	\begin{abstract}
	
	An orbit-like foliation is a singular foliation on a complete  Riemannian manifold $M$
 whose leaves are locally equidistant (i.e., a singular Riemannian foliation) and (transversely) infinitesimally homogenous. This class of singular foliation
contains not only the classe of partion of the space into orbits of isometric actions, but also infinite many non homogenous examples and
in particular the partition of $M$ into orbits of a proper groupoid. 
	
In this paper we prove  a version of Kondrakov Embedding Theorem and an analogous Principle of Symmetric Criticality of Palais
for basic funcions of  orbit-like foliations. 	
As proof of concepts, we study not only the corresponding Yamabe problem in the setting, but also to the case of fiber bundles with homogeneous fibers, seeking for the existence of metrics with constant scalar curvature that respect the respective Riemannian Foliation decomposition. An application to the existence of positive constant scalar curvature on exotic spheres is presented. In an upcoming version we shall extend the results to the corresponding Kazdan--Warner problem.

%Via the construction of a basic projection operator on compact manifolds with orbit-like SRF of compact leaves, we recover the natural notion of a Sobolev space of basic distributions to: following the folkloric result of E. Hebey and M. Vaugon on the existence of better compact embedding for Sobolev spaces in the lines of Kondrakov Embedding Theorem, we furnish an analogous result in this scenario, as well as an analogous Principle of Symmetric Criticality due to Palais. As proof of concepts, we study not only the corresponding Yamabe problem in the setting, but also to the case of fiber bundles with homogeneous fibers, seeking for the existence of metrics with constant scalar curvature that respect the respective Riemannian Foliation decomposition. An application to the existence of positive constant scalar curvature on exotic spheres is presented. In an upcoming version we shall extend the results to the corresponding Kazdan--Warner problem.	
	\end{abstract}
\maketitle

%\iffalse
%\textcolor{blue}{18/3/2021 @ 10:00 - fiz uma introdução provisória e reestruturei a seção 2.}

%\textcolor{blue}{19/3/2021 @ 4:00 - tentei otimizar as hipóteses locais referentes às instâncias de "Lie group". Quando o resultado parece polêmico, há uma referência num comentário que explica a situação. Corrigi a referência ao artigo do Palais. Adicionei uma seção provisória "further developments and personal questions".}
%\fi

\section{Introduction}

%%%
%%MALEXWORDS
%%%MALEXNEWS tornei invisivel este paragrafo pois (pelo menos nesta versao) nao estamos na pratica lidando com alguma edp que venha da fisica
%%%% 
%%%
%Symmetries assumptions arise naturally when modelling nature phenomena. For instance, these are commonly obtained assuming the existence of some group of symmetry on %a certain Riemannian manifold. The relation then with physics, and hence, to the best of our current knowledge, to nature, is reinforced by the classical Noether's %Theorem (see \cite{Noether1918}).

%%%%
%%MALEXWORDS
%%%MALEXNEWS coloquei um paragrafo de folheacao DEPOIS da discussao dos trabalhos de vcs 
%%%
%On the other hand, when some Riemannian manifolds are considered with a particular decomposition in leaves, that are compatible in the sense of such leaves being %locally equidistant, it emerges a more general concept of symmetry assumption, that do need not come from the influence of a group action \cite{Radeschi2014}. These %are Riemannian manifolds with Singular Riemannian Foliations.

%Moreover, 

It is classical nowadays (see for instance \cite{hebey1,hebey2,Hebey_2000})) that geometric analytic problems modeled on manifolds equipped with symmetries coming from group actions are easier to deal given both, the existence of better compactness embeddings of Sobolev spaces in Lebesgue spaces \cite{hebey3}, and the classical Principle of Symmetric Criticality due to Palais \cite{palais1979}.

A recent proof of concept of the aforementioned discussion is presented in \cite{cavenaghi2021}, where the authors pose and solve the analogous Kazdan--Warner problem (\cite{kazdaninventiones,kazadanannals,kazdan1975}) in the setting of Riemannian manifolds with isometric group actions: \emph{which invariant functions are the scalar curvature of Riemannian metrics?}

%%%
%%MALEXWORDS
%%% sobre folheacao
%%%

On the other hand, a partition of the Riemannian manifolds 
into  orbits of isometric actions are particular examples of \emph{singular Riemannian
foliations} (SRF for short), i.e., singular foliations
 locally equidistant (see Definition \ref{MALEX-definition-SRF}). This more general class of singular foliations,
appear naturally in different context.  
In particular, the partition of orbits of a proper groupoid (recall Definition \ref{MALEXdefinition-Lie-groupoids}
 and  discussion  in Section \ref{MALEXsec-Sasaki metrics-SRF-holonomy-groupoid}) 
are examples of the so called \emph{orbit-like foliation}, i.e.,
 SRF whose restriction to each slice is homogenous, i.e., coming from an action of a compact group (see Definition \ref{MALEX-definition-orbit-like}). 
As far as we know, even the  most classical examples of non homogenous orbit-like foliations of codimention one in spheres (see  \cite{FKM})
and the more recent examples of orbit-like foliations of  codimension greater than one in spheres  (see \cite{radeschi2014clifford}) 
may not come from proper (global)  groupoids.
Thus, at least until to this date, orbit-like foliations are a broader class of singular foliations than those whose leaves are orbits of proper grupoids.

% also contain the classe of 
%singular foliations whose leaves are  
%orbits of  proper Lie groupoids. 

%%%
%%%MALEXWORDS 
%%% em particular synthesize to disseminate não encontrei no google.. e sim AND..
%%
Pursuing to establish an analytic framework to consider both physical and geometric problems modeled on Riemannian manifolds with singular Riemannian foliations, 
and in particular orbit-like foliations, in this paper we synthesize %to 
and disseminate the concept of the \emph{Sobolev space of basic distributions}, which consists in the Banach space of distributions that are constant along the leaves of a Singular Riemannian Foliation. In particular we prove  a Kondrakov type result. 

\begin{theorem}[Kondrakov-type theorem]\label{thm:kondrakovintro}
Suppose that $M$ is a connected compact Riemannian manifold and $\mathcal{F}$ is a SRF on $M$ whose leaves are closed.
% and which contains an $L\in \mathcal{F}$ such that $\dim L \geq 1$. 
Then there exists $p_0 > p^* := np/(n-p)$ such that given $1<q<p_0$, the canonical embedding $W^{1,p}(M)^\mathcal{F} \hookrightarrow L^q(M)$ is compact.
\end{theorem}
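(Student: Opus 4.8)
Let $k:=\min_{x\in M}\dim L_x$ denote the least dimension occurring among the (closed) leaves of $\mathcal F$; the hypotheses are to be understood as forcing $k\ge 1$. Since an elementary manipulation gives $(n-k)p/(n-k-p)>np/(n-p)=p^\ast$ whenever $k\ge 1$, we may fix a finite exponent $p_0$ with $p^\ast<p_0\le (n-k)p/(n-k-p)$ (the right-hand side read as $+\infty$ when $n-k\le p$). The plan is to reduce the statement to the \emph{improved (``symmetric'') Sobolev inequality}
\begin{equation*}
\|u\|_{L^{p_0}(M)}\ \le\ C\bigl(\|\nabla u\|_{L^p(M)}+\|u\|_{L^p(M)}\bigr),\qquad u\in W^{1,p}(M)^{\mathcal F},\tag{$\star$}
\end{equation*}
for some $C=C(M,\mathcal F,p,p_0)>0$. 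Granting $(\star)$, the theorem is immediate: if $(u_j)$ is bounded in $W^{1,p}(M)^{\mathcal F}$, the classical Rellich--Kondrakov theorem on the compact manifold $M$ gives a subsequence with $u_j\to u$ in $L^1(M)$, whereas $(\star)$ keeps $(u_j)$ bounded in $L^{p_0}(M)$; hence, for any $1\le q<p_0$, interpolation $\|u_j-u\|_{L^q}\le\|u_j-u\|_{L^1}^{1-\theta}\,\|u_j-u\|_{L^{p_0}}^{\theta}$ with $\theta=(1-1/q)/(1-1/p_0)\in[0,1)$ yields $u_j\to u$ in $L^q(M)$. So everything comes down to $(\star)$.

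To prove $(\star)$ I would localize along the leaves. Using compactness, cover $M$ by finitely many saturated tubular neighbourhoods $U_1=\mathrm{Tub}_{\epsilon_1}(L_{x_1}),\dots,U_N=\mathrm{Tub}_{\epsilon_N}(L_{x_N})$ with the $\epsilon_j$ small (each point lies in the tube of its own leaf). Because $\mathcal F$ is a singular Riemannian foliation with closed leaves, the tube/slice structure applies on each $U_j$: the normal exponential map identifies $U_j$ with the $\epsilon_j$-disk bundle of $\nu L_{x_j}$, the restriction $\mathcal F|_{U_j}$ is the ``suspension'' of an induced foliation $\mathcal F_{S_j}$ on a normal slice $S_j\cong D^{\,n-m_j}_{\epsilon_j}$ ($m_j:=\dim L_{x_j}\ge k$), and a basic function on $U_j$ corresponds exactly to an $\mathcal F_{S_j}$-basic, holonomy-invariant function on $S_j$. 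Since $L_{x_j}$ is compact and of positive dimension, and the holonomy transport acts through a bounded family of uniformly bi-Lipschitz maps of $S_j$, for $u\in W^{1,p}(M)^{\mathcal F}$ supported in $\mathrm{Tub}_{\epsilon_j/2}(L_{x_j})$ the norms $\|u\|_{L^{p_0}(U_j)}$ and $\|\nabla u\|_{L^p(U_j)}$ are comparable, up to constants depending only on $U_j$, to $\|\widetilde u\|_{L^{p_0}(S_j)}$ and $\|\nabla\widetilde u\|_{L^p(S_j)}$ for the corresponding $\widetilde u$ on the slice. Feeding this into the Euclidean Gagliardo--Nirenberg--Sobolev inequality on $D^{\,n-m_j}_{\epsilon_j/2}\subset\mathbb{R}^{\,n-m_j}$ gives
\begin{equation*}
\|u\|_{L^{p_0}(U_j)}\ \lesssim\ \|\widetilde u\|_{L^{p_0}(S_j)}\ \lesssim\ \|\widetilde u\|_{L^{(n-m_j)p/(n-m_j-p)}(S_j)}\ \lesssim\ \|\nabla\widetilde u\|_{L^p(S_j)}\ \lesssim\ \|\nabla u\|_{L^p(U_j)},
\end{equation*}
where the second inequality uses that $n-m_j\le n-k$ forces $(n-m_j)p/(n-m_j-p)\ge p_0$ together with $|S_j|<\infty$ (in the borderline range $n-m_j\le p$ one instead uses $W^{1,p}(S_j)\hookrightarrow L^{p_0}(S_j)$, absorbing a lower-order term). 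Finally, pick a \emph{basic} partition of unity $\{\eta_j\}$ subordinate to $\{U_j\}$ --- e.g.\ by normalizing the Lipschitz basic functions $x\mapsto\chi_j\bigl(d(L_x,L_{x_j})\bigr)$ --- so that each $\eta_j u$ is basic, belongs to $W^{1,p}(M)$, and is supported in $\mathrm{Tub}_{\epsilon_j/2}(L_{x_j})$; applying the local estimate to $\eta_j u$, using $\|\nabla(\eta_j u)\|_{L^p}\le\|\nabla u\|_{L^p(U_j)}+\|\nabla\eta_j\|_{\infty}\|u\|_{L^p(U_j)}$, and summing over $j$ proves $(\star)$.

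The Euclidean Sobolev inequality, the Rellich--Kondrakov theorem and the interpolation step are routine; the crux --- and the step I expect to be the main obstacle --- is the local reduction of the second paragraph, i.e.\ making rigorous that near each leaf $L$ the basic $W^{1,p}$ functions are, up to uniformly bi-Lipschitz identifications, functions on a slice of dimension $n-\dim L$, with the relevant transverse densities pinched away from $0$. This rests on the local structure theory of singular Riemannian foliations (tube and slice theorems, holonomy), and the lower pinching of the densities is exactly what fails when $\mathcal F$ possesses a $0$-dimensional leaf --- which is precisely why $p_0$ can be chosen strictly above $p^\ast$ if and only if $k\ge 1$.
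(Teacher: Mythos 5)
Your overall architecture --- a local transverse Sobolev inequality giving boundedness in $L^{p_0}$, combined with $L^1$-compactness from the classical Rellich--Kondrakov theorem and interpolation --- is sound, and your compactness step is in fact cleaner than the one in the paper. The genuine gap lies in where you extract the dimensional gain. You base everything on $k=\min_{x}\dim L_x$ and explicitly concede that your argument requires $k\geq 1$, i.e.\ that \emph{every} leaf has positive dimension. The paper only needs the foliation to be nontrivial (one leaf of positive dimension) and obtains the gain from $d^*$, the dimension of the \emph{regular} (principal) leaves, which is the maximal leaf dimension. The observation you are missing is that the singular stratum has measure zero, so $\int_M|f|^q\,d\mu=\int_{M^{\mathrm{reg}}}|f|^q\,d\mu$: no transverse control whatsoever is needed near singular leaves, because the $L^q$-norm does not see them. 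The paper accordingly covers only $M^{\mathrm{reg}}$ by finitely many product charts $\varphi(\Omega)=U\times V\subset\mathbb{R}^{d^*}\times\mathbb{R}^{n-d^*}$ in which basic functions depend only on the $V$-variable, and runs Fubini plus the Euclidean Sobolev embedding in the $(n-d^*)$-dimensional factor; since every regular leaf has dimension exactly $d^*\geq 1$, the exponent $(n-d^*)p/(n-d^*-p)>p^*$ comes out with no hypothesis on the singular leaves.

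Your tube-and-slice localization, by contrast, pins the admissible exponent in each tube to the \emph{worst} leaf it contains, so it degenerates exactly in the situations the paper is built to handle --- orbit-like foliations and isometric actions with fixed points or low-dimensional singular orbits, where $k=0$ or $k<d^*$. Your closing claim that $p_0$ can be chosen above $p^*$ ``if and only if $k\geq 1$'' is therefore incorrect: for the rotation foliation of $S^2$ with its two fixed points one has $k=0$ yet $d^*=1$, and the improved embedding still holds. If one does grant $\min\dim L_x\geq 1$, your slice reduction (tube/slice theorem, holonomy acting by uniformly bi-Lipschitz maps, leaf volumes pinched within a fixed tube) can plausibly be made rigorous, but it then proves a strictly weaker statement, with a worse exponent, under a strictly stronger hypothesis than the theorem requires.
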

The well definition of Sobolev space of basic distribution  is possible given the existence of a natural \emph{basic projection operator}, which plays the analogous role of the classical \emph{average operator} in Riemannian manifolds with group actions (see \cite{brebook}). This operator
also allow us to prove \emph{principle of Basic criticality for orbit-like foliations}, see Lemma \ref{MALEX-lemma-palais-l-orbit-like-version}. 
%and the basic main group actions analogous results to better handle analytical problems: 
%a corresponding \emph{Principle of Basic Criticality} for orbit-like foliations  \textbf{(see Lemma?? )}.
%and the  Kondrakov type result  below. 

Since both Yamabe (\cite{trudinger,aubin,schoen-yau1,schoen-yau2,schoen-yau3}) and Kazdan--Warner problems have re-gaining huge interest 
%%MALEXWORDS
%%(\textcolor{red}{fornecer mais referencias}), 
%%
one naturally uses the developed machinery to approach these in our scenario. Namely, we prove:

	\begin{theorem}\label{thm:orbitlikeintro}
		Let $ M^n,~n\geq 3$, be a closed Riemannian manifold endowed with an orbit-like foliation $\cal F$ with closed leaves. 
		Then $ M$ has a Riemannian metric of constant scalar curvature for which $\cal F$ is an orbit-like foliation.
	\end{theorem}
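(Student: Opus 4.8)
The plan is to solve the Yamabe problem within the class of \emph{basic} conformal factors: Theorem~\ref{thm:kondrakovintro} removes the usual loss of compactness at the critical exponent, so that no comparison with the round sphere is needed, and the Principle of Basic Criticality, Lemma~\ref{MALEX-lemma-palais-l-orbit-like-version}, will upgrade a constrained minimizer to an honest solution of the Yamabe equation.

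First I would fix a Riemannian metric $g$ on $M$ for which $\mathcal{F}$ is orbit-like and record two preliminary points. First, if $u \in C^\infty(M)$ is positive and basic, then $\mathcal{F}$ is again orbit-like for $g_u := u^{4/(n-2)} g$, since scaling by a function constant along the leaves preserves local equidistance and transverse infinitesimal homogeneity. Second, after replacing $g$ by a ``symmetrized'' orbit-like metric --- averaging over the local compact group actions that present the orbit-like structure, i.e.\ applying the basic projection operator at the level of the metric, exactly as one averages a metric over a compact group of isometries --- we may assume that $R_g$ is a basic function. Writing $a := \tfrac{4(n-1)}{n-2}$ and $2^\ast := \tfrac{2n}{n-2}$, set
\[
Q_g(u) = \frac{\displaystyle\int_M \bigl(a\,\abs{\nabla u}^2 + R_g\,u^2\bigr)\,dV_g}{\displaystyle\Bigl(\int_M \abs{u}^{2^\ast}\,dV_g\Bigr)^{2/2^\ast}}, \qquad \mu_{\mathcal{F}} := \inf\bigl\{\, Q_g(u) : 0 \neq u \in W^{1,2}(M)^{\mathcal{F}} \,\bigr\}.
\]
Testing with $u \equiv 1$ shows $\mu_{\mathcal{F}} < \infty$, and $R_g \in L^\infty$ together with $L^{2^\ast}(M) \hookrightarrow L^2(M)$ shows $\mu_{\mathcal{F}} > -\infty$. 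Take a minimizing sequence $(u_k)$ with $\norm{u_k}_{L^{2^\ast}} = 1$; the upper bound on the numerator and $R_g \in L^\infty$ yield a uniform $W^{1,2}$-bound, so after passing to a subsequence $u_k \rightharpoonup u$ in $W^{1,2}(M)^{\mathcal{F}}$ and, by Theorem~\ref{thm:kondrakovintro} with $p = 2$ (note $p^\ast = 2^\ast < p_0$), $u_k \to u$ strongly in $L^{2^\ast}(M)$ and in $L^2(M)$. Hence $\norm{u}_{L^{2^\ast}} = 1$, while weak lower semicontinuity of the Dirichlet integral and strong $L^2$-convergence of $\int_M R_g u_k^2$ give $Q_g(u) \le \mu_{\mathcal{F}}$, so $u$ is a minimizer; replacing $u$ by $\abs{u}$ we may take $u \ge 0$.

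It then remains to identify $g_u$ as a constant scalar curvature metric. As a critical point of $Q_g$ restricted to $W^{1,2}(M)^{\mathcal{F}}$, the function $u$ satisfies $\int_M \bigl( a\,\langle \nabla u, \nabla \phi\rangle + R_g u \phi - \mu_{\mathcal{F}}\,u^{2^\ast - 1}\phi \bigr)\,dV_g = 0$ for every basic $\phi$. Since $R_g$ is basic, $Q_g$ is invariant under the symmetries that cut out $W^{1,2}(M)^{\mathcal{F}}$ inside $W^{1,2}(M)$, so Lemma~\ref{MALEX-lemma-palais-l-orbit-like-version} extends this identity to \emph{all} $\phi \in C^\infty(M)$; equivalently, $u$ is a weak solution of $-a\,\Delta_g u + R_g u = \mu_{\mathcal{F}}\,u^{2^\ast - 1}$. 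Trudinger's $L^p$-iteration and Schauder bootstrapping (recall $R_g \in C^\infty$) make $u$ smooth, and the strong maximum principle forces $u > 0$ everywhere, since $u \not\equiv 0$. Dividing by $u^{2^\ast - 1}$ and using the conformal transformation law of the scalar curvature gives $R_{g_u} \equiv \mu_{\mathcal{F}}$, and by the first preliminary point $\mathcal{F}$ is orbit-like for $g_u$.

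The crux I expect is the second preliminary point together with the application of Lemma~\ref{MALEX-lemma-palais-l-orbit-like-version}: these are the two places where one must genuinely use that $\mathcal{F}$ is orbit-like and not merely a singular Riemannian foliation. Producing an orbit-like adapted metric with basic scalar curvature is automatic in the homogeneous case --- an isometric group makes $R_g$ invariant for free --- but in general it requires the averaging machinery developed in the paper; and Palais-type symmetric criticality for the Yamabe functional relies precisely on the local presentation of $\mathcal{F}$ by compact group actions, which a general SRF lacks. By contrast, once Theorem~\ref{thm:kondrakovintro} is available the minimization is soft: the improved embedding makes the critical exponent behave subcritically, so there is no bubbling and no need for the sharp estimate $\mu_{\mathcal{F}} < \mu(\mathbb{S}^n)$.
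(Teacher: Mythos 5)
Your overall strategy is the paper's: restrict to the basic Sobolev space, use the improved compactness of Theorem~\ref{thm:kondrakovintro} at the critical exponent so that no comparison with the round sphere is needed, extract a minimizer, upgrade it to a genuine critical point by symmetric criticality, and finish with elliptic regularity and the maximum principle. The variational setup differs: you minimize the Yamabe quotient $Q_g$ on the unit $L^{2^\ast}$-sphere of $W^{1,2}(M)^{\mathcal F}$, whereas the paper minimizes the free functional $J(u)=2b_n\int|\nabla u|^2+\tfrac12\int \mathrm{scal}_{\ga}u^2+\tfrac{c}{2^\ast}\int u^{2^\ast}$ over the constraint set $\mathbf M_{\mathcal F}=\{0\le u\le C,\ \tfrac{c}{2^\ast}\int u^{2^\ast}=\epsilon\}$ and then removes the Lagrange multiplier by rescaling the metric (the $L^\infty$ bound $u\le C$ is what lets them run the maximum-principle/regularity step and rule out boundary minimizers). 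Your route is the more classical one and is slightly cleaner once compactness is in hand; both are legitimate.

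The genuine gap is your ``second preliminary point.'' The symmetric criticality statement you invoke is not metric-agnostic: Lemma~\ref{MALEX-lemma-palais-l-orbit-like-version} is a local statement on $E^\delta$, and the global version actually needed here is Proposition~\ref{MALEX-proposition-simetria-J-M}, whose proof requires the background metric to be the \emph{specific} metric of Eq.~\eqref{MALEX-eq-metrica-basica-J-simetrica}, i.e.\ Sasaki metrics on tubular neighborhoods glued by a basic partition of unity. The verification that $dJ(b)$ is a symmetric functional (Lemma~\ref{MALEX-lemma-palais-J-symmetric-linear}) uses concretely that the flows of volume-preserving linearized vector fields are isometries of the fibers of $E$ and preserve the Sasaki volume, and that $\nabla b$ splits compatibly with the Sasaki structure; none of this is available for an arbitrary adapted metric. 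Your proposed fix --- ``apply the basic projection operator at the level of the metric'' --- is not an operation the paper constructs (the averaging operators act on functions, not on tensors), and even for a genuinely averaged metric it would remain to check both that $\mathrm{scal}_g$ is basic and that the criticality principle applies. The repair is simply to take $g$ to be the metric of Eq.~\eqref{MALEX-eq-metrica-basica-J-simetrica} from the outset and to cite Proposition~\ref{MALEX-proposition-simetria-J-M} for the passage from basic test functions to all test functions; note that the basicness of $\mathrm{scal}_g$ for this metric is itself asserted rather than proved in the paper, so if you want a fully self-contained argument this is the point that still needs to be checked.
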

	
	Once Riemannian submersions are a particular kind of manifold with Riemannian foliations, and metrics with curvature properties gain an additional constraint in this scenario given the isometry condition between horizontal space and the base Riemannian manifold, to stretch the range of the developed machinery, we also present a result concerning the existence of Riemannian submersion metrics of constant positive scalar curvature on fiber bundles with homogeneous fibers:

	\begin{theorem}\label{thm:bundlesintro}
		Let $ M^n,~n\geq 3$, be a closed Riemannian manifold endowed with a Foliation $\cal F$ induced by a fiber bundle such that: 
		\begin{enumerate}[$(i)$]
		    \item The structure group $G$ is compact and has non-abelian Lie algebra;
		    \item The fiber $L$ is an homogeneous space.
		\end{enumerate}
		 Then $M$ has a Riemannian metric of positive constant scalar curvature for which $\cal F$ is Riemannian.
	\end{theorem}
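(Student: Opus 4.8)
The plan is to first put on $M$ a Riemannian submersion metric whose scalar curvature is positive \emph{everywhere}, using hypotheses $(i)$ and $(ii)$ in an essential way, and then to run the basic Yamabe argument behind Theorem~\ref{thm:orbitlikeintro} to deform it to a metric of \emph{constant} positive scalar curvature without destroying the bundle structure. Write the bundle as $M = P\times_G L$ with $P\to B$ a principal $G$-bundle. Since the structure group $G$ acts on $L$ and, by $(ii)$, $L$ is $G$-homogeneous, one may write $L = G/K$; after replacing $G$ by its image in $\Diff(L)$ (which does not change the Lie algebra, so $(i)$ persists) the action is effective, and after replacing $G$ by its identity component one may assume $G$ connected, hence $L$ connected.

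\emph{Step 1 (the fiber).} I would fix a bi-invariant metric on $G$ and let $g_L$ be the induced normal homogeneous metric on $L=G/K$: it is $G$-invariant, has $\sec\ge 0$, and has constant scalar curvature $c_0$ by homogeneity. If $c_0=0$ then $g_L$ is Ricci-flat; a compact homogeneous Ricci-flat manifold is flat, so $L$ would be a flat torus, and a connected compact group acting effectively by isometries on a flat torus factors through the torus of translations and is therefore abelian, contradicting $(i)$. Hence $c_0>0$.

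\emph{Step 2 (the total space).} Choosing a principal connection on $P$ and a metric $g_B$ on $B$, for $\epsilon>0$ let $g_\epsilon$ be the associated connection metric on $M$ with vertical part $\epsilon^{2}g_L$. Then $\pi\colon(M,g_\epsilon)\to(B,g_B)$ is a Riemannian submersion whose fibers are isometric to $(L,\epsilon^{2}g_L)$ and are minimal (the transition maps act as isometries of $(L,g_L)$, so all fibers carry the same volume density), and $\cal F$ is exactly its fiber foliation, hence Riemannian. The O'Neill curvature formulas then give
\[
\mathrm{scal}(g_\epsilon) \;=\; \epsilon^{-2}c_0 \;+\; O(1)\qquad(\epsilon\downarrow 0),
\]
where the $O(1)$, uniformly bounded on the compact manifold $M$, absorbs $\pi^{*}\mathrm{scal}(g_B)$ and the O'Neill correction terms (the integrability tensor contributes $O(\epsilon^{2})$, the second fundamental form $O(1)$). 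So for $\epsilon$ small, $g_{\epsilon}$ has $\mathrm{scal}>0$ everywhere. Now $\cal F$ is orbit-like with closed leaves (locally $M\cong T\times L$ with $\cal F$ the partition into $G$-orbits on $L$), and I would apply the machinery of the paper: restrict the Yamabe functional of $(M,g_{\epsilon})$ to $W^{1,2}(M)^{\cal F}$; since $\mathrm{scal}(g_\epsilon)>0$ the conformal Laplacian is coercive, so the basic Yamabe invariant is positive, and by Theorem~\ref{thm:kondrakovintro} (with $p=2$, $q=p^{*}=2n/(n-2)<p_0$) the embedding $W^{1,2}(M)^{\cal F}\hookrightarrow L^{p^{*}}(M)$ is compact, so a minimizing sequence converges to a minimizer $u$, which may be taken strictly positive by regularity and the maximum principle. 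By the Principle of Basic Criticality (Lemma~\ref{MALEX-lemma-palais-l-orbit-like-version}), $u$ is a critical point of the unrestricted Yamabe functional, hence $\bar g:=u^{4/(n-2)}g_{\epsilon}$ has constant scalar curvature equal to the positive basic Yamabe invariant. Since $u$ is basic, $u=\pi^{*}v$ for some $v\in C^\infty(B)$, so $\bar g$ is again a connection metric for $\pi$ with base $v^{4/(n-2)}g_B$; in particular $\pi\colon(M,\bar g)\to(B,v^{4/(n-2)}g_B)$ is a Riemannian submersion and $\cal F$ is Riemannian for $\bar g$.

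I expect the main obstacle to be precisely the passage from the algebraic input $(i)$--$(ii)$ to honest positivity of scalar curvature, i.e.\ Step~1 combined with the curvature bookkeeping in Step~2: non-abelianity of $\mathfrak g$ and effectiveness of the $G$-action must conspire, through the rigidity of flat homogeneous spaces, to exclude the flat-fiber case, after which shrinking the fiber of a connection metric dominates all O'Neill error terms. Once a positive-scalar-curvature Riemannian submersion metric is available, Step~3 is essentially an invocation of the already-developed theory, the only delicate point being that the basic — hence leafwise-constant, hence base-pulled-back — conformal factor keeps the metric a Riemannian submersion.
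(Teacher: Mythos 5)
Your proposal is correct, and its skeleton coincides with the paper's: equip the homogeneous fiber with a $G$-invariant metric of positive (hence constant) scalar curvature, take the associated connection metric with totally geodesic fibers, shrink the fibers by a canonical variation so that the fiber term dominates the O'Neill corrections and the total space acquires positive basic scalar curvature, then run the basic Yamabe minimization (Kondrakov compactness at the critical exponent $2^{*}$ together with the basic criticality principle), and finally observe that a basic conformal factor is pulled back from the base, so the Riemannian-submersion structure survives. The one genuinely different ingredient is where hypothesis $(i)$ enters: the paper gets positive scalar curvature on the fiber by citing a Cheeger-deformation result (Theorem F of \cite{cavenaghi2018positive}), whereas you take the normal homogeneous metric induced by a bi-invariant metric on $G$ and exclude $\mathrm{scal}=0$ through the rigidity chain ``nonnegatively curved and scalar-flat $\Rightarrow$ Ricci-flat $\Rightarrow$ flat compact homogeneous $\Rightarrow$ torus $\Rightarrow$ abelian connected isometry group,'' contradicting $(i)$. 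Your route is more self-contained and makes the role of non-abelianity transparent, at the cost of invoking the Bochner/Alekseevskii--Kimelfeld rigidity of compact homogeneous Ricci-flat spaces. One caveat: your parenthetical claim that replacing $G$ by its image in $\Diff(L)$ \emph{does not change the Lie algebra} fails when the action has a positive-dimensional kernel (e.g.\ $G=SU(2)\times T^{k}$ acting on $L=T^{k}$ through translations only), and in that situation neither your argument nor the paper's produces a positively curved fiber --- indeed the conclusion itself can fail (take $M=T^{n}$). Both proofs therefore implicitly read hypothesis $(i)$ for the \emph{effective} structure group, i.e.\ for the image of $G$ in $\Diff(L)$; this should be stated rather than hidden in the parenthesis.
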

	
	Then a simple combination of the classical Eells--Kuiper invariant (\cite{eells}), which determine the number of diffeomorphism classes of exotic spheres that can be realized as the total sphere bundles; with Theorem \ref{thm:bundlesintro}, allow us to obtain the following:
	
	\begin{corollary}\label{thm:grourigasintro}
 16 (resp. $4.096$) from the $28$ (resp. $16.256$) diffeomorphisms classes of the 
$7$-dimensional (resp $.15$)-exotic spheres admit metrics of positive constant scalar curvature. Moreover, these can be taken as Riemannian submersion metrics when such spaces are considered as the total space of sphere bundles.
\end{corollary}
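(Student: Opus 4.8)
The plan is to combine Theorem~\ref{thm:bundlesintro} with the classification results on exotic spheres realized as total spaces of linear sphere bundles. First I would recall the Milnor--Gromoll--Meyer type constructions: the $7$-dimensional exotic spheres arise as total spaces of $\sphere^3$-bundles over $\sphere^4$ (the Milnor bundles), classified by pairs of integers, and the Eells--Kuiper invariant \cite{eells} computes, as an explicit arithmetic expression in those integers, which of the $28$ oriented diffeomorphism classes in $\Theta_7 \cong \bb Z/28$ occur this way; the known count is that exactly $16$ classes are realized. Likewise, in dimension $15$ one uses $\sphere^7$-bundles over $\sphere^8$ and the Eells--Kuiper invariant to see that $4096$ of the $16256$ classes in $\Theta_{15}$ are realized as such total spaces. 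I would cite these enumerations rather than reprove them, since they are classical.

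Next I would verify that each of these total spaces falls within the hypotheses of Theorem~\ref{thm:bundlesintro}. The relevant bundles are fiber bundles with fiber $L = \sphere^3$ (resp.\ $\sphere^7$), which is homogeneous ($\sphere^3 = SU(2)$, $\sphere^7 = Spin(7)/G_2$, or more simply $\sphere^7 = Sp(2)/Sp(1)$), and with structure group a compact Lie group $G$ acting by isometries on the fiber: for the Milnor bundles $G$ can be taken to be $SO(4)$ or $Sp(1)\times Sp(1)$, and for the $15$-dimensional case $G = Spin(8)$ or an appropriate compact subgroup; in all cases $G$ is compact with non-abelian Lie algebra, so conditions $(i)$ and $(ii)$ are met. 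Theorem~\ref{thm:bundlesintro} then produces on each such total space a Riemannian metric of positive constant scalar curvature for which the bundle foliation is Riemannian, i.e.\ a Riemannian submersion metric; applying this to one representative of each realized diffeomorphism class yields the corollary.

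The main obstacle — and the point that requires care rather than difficulty — is the bookkeeping: making sure the structure group of the sphere bundle can genuinely be reduced to (or is) a compact \emph{non-abelian} Lie group, and that the fiber is presented as a homogeneous space compatibly with that structure group, so that Theorem~\ref{thm:bundlesintro} applies verbatim. For $\sphere^1$- or torus-bundles this would fail, but the exotic-sphere bundles in question have $3$- and $7$-sphere fibers with genuinely non-abelian structure groups, so the hypothesis is satisfied. A secondary point is simply to quote the Eells--Kuiper computation correctly so that the numbers $16$ out of $28$ and $4096$ out of $16256$ match the literature; I would state this as the input and not re-derive the invariant. With these two inputs in hand, the proof is a one-line application of Theorem~\ref{thm:bundlesintro}.
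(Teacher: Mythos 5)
Your proposal follows essentially the same route as the paper: quote the Eells--Kuiper enumeration of which diffeomorphism classes in $\Theta_7$ and $\Theta_{15}$ occur as total spaces of sphere bundles, verify that these bundles satisfy the hypotheses of Theorem~\ref{thm:bundlesintro} (homogeneous sphere fiber, compact non-abelian structure group), and apply that theorem. If anything, you are more careful than the paper's one-line argument, which sets the fiber to $S^n$ with structure group $O(n+1)$ for $n=7,15$, whereas the Milnor-type constructions are $S^3$-bundles over $S^4$ and $S^7$-bundles over $S^8$; your identification of the fibers and structure groups is the accurate one.
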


%%The technical results used in the proof of the aforementioned results are then stated:

%\begin{theorem}[Kondrakov-type theorem]\label{thm:kondrakovintro}
%Suppose that $M$ is a connected compact Riemannian manifold and $\mathcal{F}$ is a SRF on $M$ whose leaves are closed and which contains an $L\in \mathcal{F}$ such %that $\dim L \geq 1$. There exists $p_0 > p^* := np/(n-p)$ such that given $1<q<p_0$, the canonical embedding $W^{1,p}(M)^\mathcal{F} \hookrightarrow L^q(M)$ is %compact.
%\end{theorem}

This paper is organized as follows. 
In Section \ref{MALEX-section-Linear-Lie-groupoid} we review the linear Lie groupoid associated to the semi-local description
of an orbit-like foliation $\mathcal{F}$ near a closed leave $B$ of $\mathcal{F}$. We try to present this in a  self contained presentation,  
 hopping to make it accessible also to readers without previous training in groupoid or singular Riemannian foliations.
Then a \emph{type of principle of symmetric criticality of Palais} is proved in a neighborhood of a leaf $B\in \mathcal{F}$;
see Lemma \ref{MALEX-lemma-palais-l-orbit-like-version}. In Section \ref{MALEX-section-principle-symmetry-palais-M}
we check that the  operator $J$ associated to the Yamabe problem  satisfies  the basic criticality principle, once
one considers a special basic metric. Then in Section
\ref{MALEX-sec-sobolev-spaces} Sobolev spaces of basic functions of SRF $\mathcal{F}$  and the $\mathcal{F}$-avarage operator on $M$ are presented and Theorem \ref{thm:kondrakovintro}
is proved. Finally  Theorems  
\ref{thm:orbitlikeintro}  and \ref{thm:bundlesintro}
as well Corollay as \ref{thm:grourigasintro} are proved  in Section \ref{MALEX-section-yamabe-problem}. 

\section*{Acknowledgments}
The authors are thankful to Jo\~ao Marcos do \'{O},  Llohann Speran\c{c}a  and Gustavo P. Ramos for fruitful conversations.

The first named author was supported by grant \#2016/23746-6, S\~{a}o Paulo Research Foundation (FAPESP).
The second named author was supported by the SNSF-Project 200020E\_193062 and the DFG-Priority programme SPP 2026.

%%%%%%%%%%%%%%%%%%%%%%%%%%%%%%%%%%%%%%%55
%%%%%%%%%%%%%%55 malex MALEX 
%%%%%%%%%%%%%%%%%%%%%%%%%%%%%%%%%%%%%%%%

\section{Linear Lie groupoid and criticality }
\label{MALEX-section-Linear-Lie-groupoid}
\subsection{A few facts about singular Riemannian foliations, orbit-like foliations and the Linear groupoid}
\label{MALEX-subsection-Linear-afewfacts}

%In this paper we consider  only SRF $\mathcal{F}$ with compact leaves on a compact manifold $M$, but the general
%aspects about linearizations and semi-local structures can be found in \cite{Alexandrino-Inagaki-Struchiner,Alexandrino-Radeschi-Molino}
%and ?? where the reader can find the facts that we briefly review in this section. 

In this section we review serveral facts on singular Riemannian foliations $\mathcal{F}$, 
most of them can be found in \cite{alexandrino_radeschi_2017,alexandrino2021lie}.
We also stress that, along this paper, the leaves of $\mathcal{F}$ are closed on a compact manifold $M.$

\subsubsection{Singular Riemannian foliations}
\label{MALEX-Section-SRF}

\begin{definition}[SRF]
\label{MALEX-definition-SRF}
A \emph{singular Riemannian foliation} on a complete Riemannian manifold $M$ is a partition $\mathcal{F}=\{ L\}$ of $M$ into immersed 
submanifolds  without self-intersections (the \emph{leaves}) that satisfies the following properties:
\begin{enumerate}
\item[(a)] $\mathcal{F}$ is a \emph{singular foliation}, i.e., for each $v_p$ tangent to $L_p$ (i.e., the leaf through $p\in M$) then there exists a local vector field $\vec{X}$ so that $\vec{X}(p)=v_p$ and $\vec{X}$ is tangent to the leaves; 
\item[(b)] $\mathcal{F}$ is  \emph{Riemannian}, i.e, each geodesic $\gamma$ that starts orthogonal to a leaf $L_{\gamma(0)}$ remains
orthogonal to all leaves that it meets.
\end{enumerate}
\end{definition}
\begin{remark}
Item (a) is equivalent to saying  that given a point $q\in M$, 
there exists a neighborhood $U$ of $q$ in $M$, a simple foliation  $\mathcal{P}=\{P\}$ on $U$ (i.e, given by fibers of a submersion on $U$)
so that the leaf  $P_{q}\in \mathcal{P}$ (the plaque through $q$) is a  relative compact open set of the leaf $L_q$  and 
$\mathcal{P}$ is a subfoliation of $\mathcal{F}|_{U},$ i.e, 
for each $x\in U$ we have $P_{x}\subset L_{x}$. From now on $\mathcal{P}\subset \mathcal{F}|_{U}$ denotes to be a subfoliation.  
In particular item (a)
implies that $\mathcal{F}\cap S_{q}$ is a singular foliation for each transverse submanifold $S_q$, i.e., $T_{q}M=T_{q}S_{q}\oplus T_{q}L_{q}$. 
Roughly speaking item (b) says that the leaves are \emph{locally equidistant}. In other words, 
item (b) is equivalent to saying that there exists $\epsilon>0$ so that if $x\in \partial \mathrm{tub}_{\epsilon}(P_q)$ (the cilinder of radius $\epsilon$ of the 
plaque $P_q$)  then the connected component of $L_x\cap U$ containing $x$  is contained in $\partial \mathrm{tub}_{\epsilon}(P_q).$ 
\end{remark}

Typical examples of singular Riemannian foliations (SRF for short) are, among others,  
the partition of $M$ into orbits of isometric actions; 
infinite many examples of nonhomogenous SRF on Euclidean spheres constructed by Radeschi using Clifford system \cite{radeschi2014clifford};
the holonomy foliation in a Euclidean fiber bundle with a connection compatible with the metric of the fibers (see Example 
\ref{MALEX-ex-transformation-holonomy-groupoid}).

Several properties of SRF are natural generalizations of  classical properties  of the partition of $M$ into orbits of isometric actions, see \cite{alexandrino2015lie}.
Le us review a few of them. 

The first one is the generalization of  the so called   \emph{slice representation}. 
Let $\pi:U\to L_q$ be the
metric projection, and $S_q=\pi^{-1}(q)$ be the
\emph{slice} i.e., $S_{q}:=\exp_{q}(\nu_{q}L\cap B_{\epsilon}(0))$ where $\nu_{q}(L_q)$ is the normal space.  
Then the \emph{infinitesimal foliation}  $\mathcal{F}_{q}=\exp_{q}^{-1}\big(S_{q}\cap \mathcal{F}\big)$ 
turns to be a SRF on the open set of the  Euclidean space
$(\nu_{q}L_{q},g_q).$ 
The infinitesimal foliation  $\mathcal{F}_{q}$ on a neighborhood of  $\nu_{q}(L_q)$  can be extended  via the homothetic transformation 
$h^{0}_{\lambda}(v)=\lambda v$ to a SRF on $(\nu_{q}(L_q), g_q)$. 
The foliation $\mathcal{F}_{q}$ plays a role in the theory of SRF similar to the role played by the slice representation  in the theory of isometric  actions.

%From now on $\mathcal{F}_{q}$ can denote 
%this SRF on $\nu_{q}(L_q)$ with the metric $g_q$ or some thimes   $S_{q}\cap \mathcal{F}$ (with the induced metric via exponential map).  

Another general property of SRF that is analogous to the theory of isometric action, is that 
\emph{the partition of $M$ into the leaves of $\mathcal{F}$
with the same dimension is a stratification.} Recall that 
a \emph{stratification} of $M$ is a partition of
$M$ into embedded submanifolds $\{M_{i} \}_{i\in I}$ 
(called strata) such that: 
\begin{enumerate}
\item[(i)] the partition is locally finite, i.e.,
each compact subset of $M$ only intersects a finite
number of strata; 
\item[(ii)] for each $i\in I,$ there exists a subset
$I_{i}\subset I/\{i\}$ such that the closure of $M_i$
is $\overline{M}_{i}= M_{i}\cup \, \bigcup_{j\in I_{i}} M_{j}$;
\item[(iii)] $\dim M_{j}< \dim M_{i}$ for all $j\in I_{i}$
\end{enumerate}
The stratum with the leaves of  greatest dimension (the \emph{regular leaves}) \emph{is  
a open dense set, and its space of leaves  is connected.}

By successively blowing up along singular stratum we have a desingularization of SRF. More more precisely:
\begin{theorem}[\cite{Alexandrino-desingularization}]
\label{MALEX-theorem-desingularizacao}
Let $\mathcal{F}$ be a singular Riemannian foliations on a Riemannian
 compact manifold $(M,g)$ with compact leaves. 
For each small $\epsilon>0$ there exists 
a singular Riemannian foliation $\mathcal{F}_{\epsilon}=\{(L_{\epsilon})_{x} \}_{x\in M_{\epsilon}}$ 
on a Riemannian  compact manifold 
$(M_{\epsilon}, g_{\epsilon})$ and 
map $\pi_{\epsilon}: M_{\epsilon}\to M$ so that:
\begin{enumerate}
\item[(a)] $\pi_{\epsilon}$ projects leaves of 
$\mathcal{F}_{\epsilon}$ to leaves of $\mathcal{F}$; 
\item[(b)] let $\Sigma$ be the singular stratum and
$\Sigma_{\epsilon}=\pi_{\epsilon}^{-1}(\Sigma)$   
then 
$\pi_{\epsilon}:M_{\epsilon}-\Sigma_{\epsilon}\to
M-\Sigma$ is a foliated diffeomorphism;
\item[(c)] $|d\big(L_{\pi_{\epsilon}(x)}, L_{\pi_{\epsilon}(y)}\big)- 
d_{\epsilon}\big((L_{\epsilon})_{x}, (L_{\epsilon})_{y}\big) 
|<\epsilon $.
 \end{enumerate}  
In particular the metric space $M/\mathcal{F}$ is a Gromov-Hausdorf limit of a sequence of Riemannian
orbifolds $\{M_{n}/\mathcal{F}_{n}\}.$ 
\end{theorem}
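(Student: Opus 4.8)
The statement is the desingularization theorem for singular Riemannian foliations, and the plan is to prove it by \emph{iterated foliated blow-ups} along the minimal stratum, inducting on a complexity invariant of the stratification. The first reduction is to a one-step claim: if $\mathcal F$ is an SRF with closed leaves on a compact $(M,g)$ whose singular set $\Sigma\subsetneq M$ is nonempty, then for every small $\epsilon>0$ there are a compact $(\widehat M,\widehat g)$, an SRF $\widehat{\mathcal F}$ on it with closed leaves, and a foliated map $\widehat\rho\colon\widehat M\to M$ that restricts to a foliated diffeomorphism $\widehat M\setminus\widehat\rho^{-1}(\Sigma)\to M\setminus\Sigma$, distorts the leaf-space metric by at most $C\epsilon$, and for which a complexity invariant $c$ of the stratification strictly drops, $c(\widehat{\mathcal F})<c(\mathcal F)$. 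Taking $c$ to refine the depth of the stratification — e.g.\ $c(\mathcal F)=(\textrm{depth},\ \#\{\textrm{minimal strata sitting at the bottom of a deepest chain}\})$ with the lexicographic order, so that $c$ is minimal exactly for regular foliations — and iterating, one reaches after finitely many steps a regular Riemannian foliation $\mathcal F_\epsilon$ with closed leaves on a compact $M_\epsilon$; then $\pi_\epsilon$ is the composite of the maps $\widehat\rho$, $\Sigma_\epsilon:=\pi_\epsilon^{-1}(\Sigma)$, and properties (a), (b) are immediate. (Here $M_\epsilon$ is in general a compact manifold \emph{with boundary}: already for $M=\sphere^2$ with a rotation one gets $M_\epsilon=\sphere^1\times[0,1]$.)

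For the one-step blow-up, pick a connected component $\Sigma_0$ of a deepest minimal stratum; it is a closed embedded saturated submanifold. On a tubular neighborhood $\mathrm{Tub}_{r_0}(\Sigma_0)$ describe $\mathcal F$ through the normal exponential map and the infinitesimal foliations $\mathcal F_q$; their homothety-invariance, recalled above, means that in radial coordinates $\mathcal F$ on $\mathrm{Tub}_{r_0}(\Sigma_0)\setminus\Sigma_0$ is the product foliation $\mathcal F\big|_{\partial\mathrm{Tub}_\epsilon(\Sigma_0)}\times(0,r_0]$ for each $0<\epsilon\le r_0$. Set
\[
 \widehat M:=\bigl(M\setminus\mathrm{Tub}_\epsilon(\Sigma_0)\bigr)\ \cup_{\,\partial\mathrm{Tub}_\epsilon(\Sigma_0)}\ \bigl(\partial\mathrm{Tub}_\epsilon(\Sigma_0)\times[0,\epsilon]\bigr),
\]
gluing $\partial\mathrm{Tub}_\epsilon(\Sigma_0)\times\{\epsilon\}$ to $\partial\mathrm{Tub}_\epsilon(\Sigma_0)$; on the cylinder put the product foliation and a metric $g\big|_{\partial\mathrm{Tub}_\epsilon(\Sigma_0)}+dt^2$ (interpolated near $t=\epsilon$ to match $g$ smoothly), elsewhere keeping $\mathcal F$ and $g$, and let $\widehat\rho$ be the identity off the cylinder and $(x,t)\mapsto$ the point at distance $t$ from $\Sigma_0$ along the normal geodesic through $x$ on it. Then $\widehat{\mathcal F}$ is a singular foliation, it is Riemannian since on the cylinder the product metric keeps leaves locally equidistant and elsewhere nothing changed, its leaves are closed, and $\widehat\rho$ is foliated, a foliated diffeomorphism over $M\setminus\Sigma_0$, collapsing the exceptional divisor $E:=\partial\mathrm{Tub}_\epsilon(\Sigma_0)\times\{0\}$ onto $\Sigma_0$. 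The stratification of $\widehat{\mathcal F}$ is that of $\mathcal F$ with $\Sigma_0$ removed, together with the strata of the link foliation $\mathcal F\big|_{\partial\mathrm{Tub}_\epsilon(\Sigma_0)}$ inside $E$; the link has depth strictly smaller than $\mathcal F$ near $\Sigma_0$, so no deepest chain of $\mathcal F$ survives through $E$ and none is created, which gives $c(\widehat{\mathcal F})<c(\mathcal F)$.

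It remains to prove (c). The map $\pi_\epsilon$ descends to a surjection $\bar\pi_\epsilon\colon M_\epsilon/\mathcal F_\epsilon\to M/\mathcal F$ which is an isometry away from the images of the finitely many iterated exceptional divisors and near them collapses cylinders of length $\le\epsilon$; since each blow-up distorts leaf-space distances by at most $C\epsilon$ and there are finitely many, $d_{\mathrm{GH}}\!\bigl(M/\mathcal F,\,M_\epsilon/\mathcal F_\epsilon\bigr)\le C'\epsilon$. Finally, the leaf space of a regular Riemannian foliation with closed leaves on a compact Riemannian manifold is a Riemannian orbifold (Molino), so choosing $\epsilon=1/n$ exhibits $M/\mathcal F$ as the Gromov--Hausdorff limit of the Riemannian orbifolds $M_n/\mathcal F_n$.

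The main obstacle is the one-step construction: one must verify that the radial product identification is consistent across slices and coordinate patches, so that $\widehat M$ is a genuine smooth manifold and $\widehat{\mathcal F}$ a well-defined singular foliation, and that the interpolated metric keeps $\widehat{\mathcal F}$ Riemannian while staying $C\epsilon$-close to $\widehat\rho^{*}g$ at the leaf-space level (codimension-one strata, and strata whose infinitesimal foliation has low-dimensional singularities, need separate checking). Above all, one must establish that the chosen complexity $c$ — and not merely some coarser invariant — provably decreases under a single blow-up in \emph{all} cases, so that the induction terminates at a genuinely regular foliation rather than at a lower-depth singular one; and one must bookkeep the accumulated leaf-space distortion over the finitely many blow-ups uniformly as $\epsilon\to0$.
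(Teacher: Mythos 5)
First, a point of reference: the paper does not prove this statement at all --- it is imported verbatim from \cite{Alexandrino-desingularization}, so the comparison here is with the argument of that source, whose strategy (successive foliated blow-ups along minimal strata, induction on a complexity of the stratification, homothety invariance of $\mathcal{F}$ near a closed saturated stratum) your proposal correctly identifies. The difficulty is that the two items you relegate to the closing ``obstacles'' paragraph --- that the complexity provably drops at every step so the induction terminates at a \emph{regular} Riemannian foliation, and that the blown-up manifold carries an adapted metric for which the lifted foliation is still a singular \emph{Riemannian} foliation with leaf-space distance within $\epsilon$ of the original --- are not loose ends: they are essentially the entire content of the theorem. As written, your interpolation $(1-\chi(t))\,(g|_{\partial\mathrm{tub}_{\epsilon}}+dt^{2})+\chi(t)\,g$ is not known to keep the leaves locally equidistant (a convex combination of metrics for which a foliation is Riemannian need not be such a metric), and your lexicographic invariant $(\mathrm{depth},\#\{\text{deepest minimal strata}\})$ is never shown to decrease: you assert that ``no deepest chain survives through $E$ and none is created,'' but the stratification of the link foliation $\mathcal{F}|_{\partial\mathrm{tub}_{\epsilon}(\Sigma_0)}$ inside the exceptional set is exactly where new chains could appear, and controlling its depth is the nontrivial step. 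So the proposal is a correct plan with the proof of its two load-bearing lemmas missing.

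Two concrete repairs. For termination, replace your invariant by the \emph{dimension of the minimal leaves} and blow up the entire minimal stratum $\Sigma$ (all components at once): for $\xi\in\nu^{1}_{q}\Sigma$ the new leaf through $\xi$ fibers over $L_q$ with fiber the leaf of the infinitesimal foliation through $\xi$ in the unit normal sphere, and that fiber has dimension $\geq 1$ because $\exp_{q}(t\xi)\notin\Sigma$ for small $t>0$ while leaf dimension is lower semicontinuous; hence the minimal leaf dimension strictly increases at each step and the process stops after at most $\dim M$ steps at a regular Riemannian foliation with closed leaves, whose quotient is a Riemannian orbifold by Molino theory --- this also delivers the final sentence of the statement. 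Second, your spherical cut-and-glue produces manifolds with boundary already at the first step, after which subsequent blow-up centers may sit inside the boundary and the homothetic transformation lemma you invoke no longer applies as stated; the cited construction avoids this by using a blow-up that keeps $M_{\epsilon}$ closed, and it is there, not in the topological surgery, that the metric $g_{\epsilon}$ realizing item (c) is actually built.
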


In the next section, we will consider a particular type of SRF (the so called orbit-like foliation) that is fundamental
to understand the semi-local model of SRF; see Theorem \ref{MALEX-theorem-modelo-semi-local}

\subsubsection{Linearized foliations and orbit-like foliations }

Given a closed leaf $B=L_q$ we can always find a  
$\mathcal{F}$-saturated tubular neighborhood  $U=\mathrm{tub}_{\epsilon}(L_{q})$ of $L_q.$
The foliation restricted to $U$, i.e., $\mathcal{F}|_{U}$ (and in particular the partition by plaques) 
are invariant by the homothetic transformation 
$h_{\lambda}: U\to U$ defined as $h_{\lambda}(\exp(v))=\exp(\lambda v)$ for each $v\in\nu^{\epsilon}(B)$
where $\lambda\in(0,1]$. 

For each smooth vector field $\vec{X}$ in $U$ tangent to $\mathcal{F}$, we associate
a smooth vector field $\vec{X}^{\ell}$, called the \emph{linearization of
$\vec{X}$} with respect to $B$ as:

$$\vec{X}^{\ell}(q)=\lim_{\lambda\to 0} (h_{\lambda}^{-1})_{*} (\vec{X})\circ h_{\lambda}(q)$$
Since the restricted foliation $\mathcal{F}|_{U}$ is homothetic invariant, $\vec{X}^{\ell}$ is still tangent to $\mathcal{F}.$
It is possible to prove  that 

\begin{lemma}
\label{MALEX-lemma-fluxo-isometiras} 
The flows of these vector fields, once
identified with the normal exponential map, induce  isometries
on the fibers of the normal $\delta$-fiber bundle $E^{\delta}=\nu^{\delta}(B)=\{\xi\in\nu(B), \|\xi\|<\delta \}.$

\end{lemma}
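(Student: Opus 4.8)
The plan is to transport the whole picture to the normal bundle via the normal exponential map and to recognise $\vec X^{\ell}$ there as a \emph{linear} vector field over $B$ which, in addition, preserves the fibrewise norm induced by $g$. Since a linear norm‑preserving map between inner product spaces is a linear isometry, this is precisely the assertion. Concretely, let $E\colon \nu^{\epsilon}(B)\to U$, $E(\xi)=\exp_{\pi(\xi)}(\xi)$, be the normal exponential diffeomorphism, where $\pi\colon \nu(B)\to B$ is the projection; by definition of $h_{\lambda}$ one has $E\circ S_{\lambda}=h_{\lambda}\circ E$, where $S_{\lambda}(\xi)=\lambda\xi$ is fibrewise scaling. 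Writing $\widetilde X=(E^{-1})_{*}\vec X$, the same limit that defines $\vec X^{\ell}$ transports to $\widetilde X^{\ell}=\lim_{\lambda\to 0}(S_{1/\lambda})_{*}(\widetilde X)\circ S_{\lambda}=(E^{-1})_{*}\vec X^{\ell}$, so it is enough to treat $\widetilde X^{\ell}$ on $\nu^{\epsilon}(B)$.

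First I would show that $\vec X^{\ell}$ is an infinitesimal vector bundle automorphism of $\nu(B)$. In bundle coordinates $(x,v)$ adapted to $\nu(B)$ (so that $B$ is the zero section and $S_{\lambda}(x,v)=(x,\lambda v)$) write $\widetilde X=\sum_{i}a_{i}(x,v)\,\partial_{x_{i}}+\sum_{j}b_{j}(x,v)\,\partial_{v_{j}}$; since $B$ is a leaf and $\vec X$ is tangent to $\mathcal F$, the field $\widetilde X$ is tangent to the zero section along $B$, i.e.\ $b_{j}(x,0)\equiv 0$. A direct computation of the rescaled fields $(S_{1/\lambda})_{*}(\widetilde X)\circ S_{\lambda}$ then shows that the defining limit exists and
\[
\widetilde X^{\ell}(x,v)=\sum_{i}a_{i}(x,0)\,\partial_{x_{i}}+\sum_{j,l}\frac{\partial b_{j}}{\partial v_{l}}(x,0)\,v_{l}\,\partial_{v_{j}} .
\]
Only the $1$‑jet of $\widetilde X$ along $B$ enters, so $\widetilde X^{\ell}$ is a globally defined, $S_{\lambda}$‑invariant (hence linear) vector field on $\nu(B)$: it projects under $\pi$ to the field $\bar X(x)=\sum_{i}a_{i}(x,0)\,\partial_{x_{i}}$ on $B$, and its fibrewise part is linear in $v$. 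Consequently its flow $\Psi_{t}$ commutes with $S_{\lambda}$ and covers the flow $\bar\Psi_{t}$ of $\bar X$ on $B$; hence it is a vector bundle automorphism whose restriction $\Psi_{t}\colon\nu_{p}(B)\to\nu_{\bar\Psi_{t}(p)}(B)$ to each fibre is linear.

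Second, I would use the Riemannian hypothesis to show that $\Psi_{t}$ preserves the fibre norm. For $\epsilon$ small, axiom (b) applied to the closed leaf $B$ — equivalently the characterisation recalled in the Remark after Definition~\ref{MALEX-definition-SRF} — forces every leaf of $\mathcal F|_{U}$ to lie in a geodesic tube $\partial\mathrm{tub}_{c}(B)$ around $B$; hence $\vec X$ is tangent to the level sets of $f:=d(\cdot,B)^{2}$, i.e.\ $\vec X(f)\equiv 0$ on $U$. Since $f\circ h_{\lambda}=\lambda^{2}f$, the same one‑line computation gives $\vec X^{\ell}(f)=\lim_{\lambda\to 0}\lambda^{-2}\,(\vec X(f))\circ h_{\lambda}=0$, which in the model reads $\widetilde X^{\ell}\bigl(|\,\cdot\,|_{g}^{2}\bigr)\equiv 0$, where $|\,\cdot\,|_{g}$ is the norm $g$ induces on the fibres of $\nu(B)$. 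Therefore each linear map $\Psi_{t}\colon\nu_{p}(B)\to\nu_{\bar\Psi_{t}(p)}(B)$ preserves the Euclidean norm, hence is a linear isometry by polarisation, and it carries $\nu^{\delta}_{p}(B)$ onto $\nu^{\delta}_{\bar\Psi_{t}(p)}(B)$. Completeness is automatic: $\widetilde X^{\ell}$ is tangent to the compact sphere bundles $\{|\,\cdot\,|_{g}=c\}$ and $\bar X$ is complete on the compact $B$, so the flow is defined for all $t$ and preserves $E^{\delta}$.

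The main obstacle is the first step: making rigorous, and coordinate‑independent, the statement that linearisation turns $\vec X$ into an infinitesimal bundle automorphism of $\nu(B)$ — equivalently, that the flow of $\vec X^{\ell}$ sends fibres of $E^{\delta}$ to fibres, linearly. Everything else — existence of the linearisation limit, its homothety invariance, and the implication $\vec X(f)=0\Rightarrow\vec X^{\ell}(f)=0$ — follows from the same elementary computation with $h_{\lambda}$, and the sole geometric input is axiom (b), entering through the fact that the leaves near $B$ are contained in the geodesic tubes around $B$.
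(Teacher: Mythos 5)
Your argument is correct. The paper itself does not prove this lemma (it defers to the cited references \cite{alexandrino_radeschi_2017,alexandrino2021lie}), and your proof is the standard one: the coordinate computation showing that only the $1$-jet of $\widetilde X$ along the zero section survives the rescaling limit correctly identifies $\vec X^{\ell}$ as a projectable, fibrewise-linear field whose flow is a vector bundle automorphism, and the norm preservation comes from $\vec X(d(\cdot,B)^2)=0$ together with the homogeneity $f\circ h_{\lambda}=\lambda^{2}f$. The only point to state a touch more carefully is the passage from the local equidistance in the Remark (phrased for plaques) to the global statement that every leaf of $\mathcal F|_{U}$ lies in a single distance tube $\partial\mathrm{tub}_{c}(B)$: this uses that $U$ is a \emph{saturated} tubular neighborhood of the closed leaf $B$ (as the paper asserts) plus connectedness of the leaves, and it is exactly what makes $f=d(\cdot,B)^{2}$ a basic function so that $\vec X(f)\equiv 0$ on all of $U$. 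With that observation made explicit, the proof is complete.
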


\begin{example}
\label{MALEX-example-linearized-vector-field}
Given a SRF $\mathcal{F}$ with compacts leaves  on $\mathbb{R}^{m}$, and $B=0$
we have for $\vec{X}$ tangent to the leaves of $\mathcal{F}$
that 
$\vec{X}^{\ell}(v) =
\lim_{\lambda\to 0}\frac{1}{\lambda}\vec{X}(\lambda v)
=(\nabla_{v}\vec{X})_{0}$
i.e, $\vec{X}^{\ell}$ is in fact a linear vector field. In addition, 
one can check that it is also a Killing vector field. This can be proved
using the fact that the leaves are tangent to the spheres and hence
$0=\langle \vec{X}^{\ell}(v),v\rangle= \langle (\nabla_{v}\vec{X})_{0},v\rangle.$

Note that  the Killing vector fields $\vec{X}^{\ell}$ induce a Lie algebra
of a  connected Lie subgroup $K^{0}\subset O(n)$. Since by hypothesis $\mathcal{F}$ is compact, it is possible 
to check that $K^{0}$ is also compact. We have then in this example an homogenous subfoliation 
$\mathcal{F}^{\ell}=\{K^{0}(v)\}_{v\in\mathbb{R}^{m}}\subset \mathcal{F}$.
This turns to be the maximal homogenous subfoliation of $\mathcal{F}.$
\end{example}

%Note that  the Killing vector fields $\vec{X}^{\ell}$ induce a Lie algebra
%of a Lie subgroup $K\subset O(n)$. Since by hypothesis $\mathcal{F}$ is compact, it is possible 
%to check that $K^{0}$ is also compact. We have then in this example an homogenous subfoliation 
%$\mathcal{F}^{\ell}=\{K^{0}(v)\}_{v\in\mathbb{R}^{m}}\subset \mathcal{F}$. 
%%%%

The above example illustrates a  more general phenomenon.

\begin{definition} 
\label{MALEX-definition-linearized-foliation}
Given a SRF $\mathcal{F}|_{U}$ with compact leaves, 
the composition of linearized flows tangent to  $\mathcal{F}|_{U}$
induces a singular subfoliation  $\mathcal{F}^{\ell}\subset \mathcal{F}|_{U}$ on $U$, the so called 
\emph{linearized foliation}. 
\end{definition}

$\mathcal{F}^{\ell}$  can also been seen as  
the maximal infinitesimal homogenous subfoliation
of  $\mathcal{F}|_{U}.$ 
In other words, Let $\pi:U\to L_q$ be the
metric projection, and $S_{q}=\pi^{-1}(q)$ be a
slice, i.e., $S_{q}:=\exp_{q}(\nu_{q}L\cap B_{\epsilon}(0))$. 
Define  $\mathcal{F}_{q}^{\ell}$ as the   extension  
of $\exp^{-1}_{q}\big(S_{q}\cap \mathcal{F}^{\ell}\big)$ via the homothetic transformation $h_{\lambda}^{0}(v)=\lambda v$.
The foliation $\mathcal{F}_{q}^{\ell}$
is the maximal homogenous subfoliation of 
the infinitesimal foliation $\mathcal{F}_{q}.$

\begin{definition}[Orbit-like foliation]
\label{MALEX-definition-orbit-like}
A SRF  $(M,\mathcal{F})$ with compact leaves is called 
 \emph{orbit-like} if for each leaf $B=L_{q}$ we have $\mathcal{F}^{\ell}=\mathcal{F}|_{U}$. In other words
if for each $q\in M$ the infinitesimal foliation $\mathcal{F}_{q}$ is homogenous, and the leaves
are orbits of a compact (isometric) group $K_q^{0}$. 
%An infinitesimal homogenous foliation
%is called \textbf{orbit like} if the leaves of $\mathcal{F}_{p}$ are orbits of a compact (isometric) group.  
\end{definition}

\begin{remark}
\label{MALEX-remark-orbit-like-topological}
To be orbit-like  could be consider a topological property in the following sence:
Let $(M_i,\mathcal{F}_{i})$ be two SRF and $\psi:(M_{1},\mathcal{F}_{1})\to (M_{2},\mathcal{F}_{2})$ be a foliated diffeomorphism. Then
$(M_{2},\mathcal{F}_{2})$ is orbit-like if and only if $(M_{1},\mathcal{F}_{1})$ is 
 orbit-like, see \cite{alexandrino_radeschi_2017}.

\end{remark}

%%%%%%%%%%%%%%%%%%%%%%%%%%%%%%%%%%%%%%%%%%%%%%%%
\subsubsection{Sasaki metrics, SRF and holonomy groupoid}
\label{MALEXsec-Sasaki metrics-SRF-holonomy-groupoid}
Let $E=\nu(B)$ be the normal bundle of a compact leaf  $B=L$ of  
 $\mathcal{F}.$  Consider the Euclidean vector bundle $\mathbb{R}^{k}\to E\to B$
where the metric on each fiber $E_p$ 
is defined as the metric $g_p$ for $p\in B$. 

By pulling back via the normal exponential map, we can 
identify the foliation $\mathcal{F}^{\ell}$ and $\mathcal{F}|_{U}$
to singular foliations on open set $(\exp^{\nu})^{-1}(U)$ of $E$,
and from now
on we use the same notation for the foliations on $U$ or on 
$(\exp^{\nu})^{-1}(U).$
By homothetic transformation we can extend 
$\mathcal{F}^{\ell}$ and $\mathcal{F}$ to $E$.

We recall  that there exists a Sasaki
metric $\mathrm{g}^{0}$ on $E$ so that $\mathcal{F}^{\ell}$ and $\mathcal{F}$ are singular Riemannians foliations.
In fact, we can find a  distribution  $\mathcal{T}$ homothetic invariant
that is tangent to $\mathcal{F}^{\ell}$ and $\mathcal{F}$. 
This distribution can be constructed by finding a distribution $\widehat{\mathcal{T}}$ tangent 
to $\mathcal{F}$ and then linearizing the vector fields tangent to $\widehat{\mathcal{T}}$. In particular 
there may exist different  
ways to construct $\mathcal{T}$.  
 Let $\mathrm{g}^{0}$  be the \emph{associated Sasaki metric}, i.e., 
the metric so  that:
\begin{itemize}
\item $\mathcal{T}$ is orthogonal to the fibers $E$,  
\item the foot point projection,   
 $\pi:(E,\mathrm{g}^{0})\to (B,g)$ is a Riemannian submersion 
\item and the fibers $E_{p}$ have the flat metric $g_{p}$. 
\end{itemize}  
	
	Let us denote $\nabla^{\tau}$ the connection associated to the  distribution $\mathcal{T}.$  It is possible to check that  
	$\nabla^{\tau}$  is compatible with the Euclidean metric on the fibers of $E$. 

As we are going to recall in  Theorem \ref{MALEX-theorem-modelo-semi-local}, 
 we can use  the connection $\nabla^{\tau}$ 
 to describe  $\mathcal{F}$ and its linearization
$\mathcal{F}^{\ell}$. 
In order  to better understand Theorem \ref{MALEX-theorem-modelo-semi-local}, we need  to  recall  the definition of Lie groupoid. 
\begin{definition}
\label{MALEXdefinition-Lie-groupoids}
A \emph{Lie groupoid} $\mathcal{G}=\mathcal{G}_{1}\rightrightarrows \mathcal{G}_{0}$ consist of:
\begin{enumerate}
\item   a manifold $\mathcal{G}_{0}$ called the \emph{set of objects};
\item    a  (possible non Hausdorff) manifold
 $\mathcal{G}_{1}$ called   \emph{the set of arrows}  (between objects); 
\item  submersions $s,t:\mathcal{G}_{1}\to\mathcal{G}_{0}$
which associate to an arrow $g\in\mathcal{G}_{1}$ 
its \emph{source} (i.,e $s(g)$) and its \emph{target} 
(i.e, $t(g)$) respectively;
\item  a multiplication map $m: \mathcal{G}_{2}\to\mathcal{G}_{1},$ $m(g,h)=gh$ where 
$\mathcal{G}_{2}=\{(g,h)\in\mathcal{G}_{1}\times\mathcal{G}_{1} \, | s(g)=t(h) \},$ that satisfies $s(g h )=s(h)$ and $t(g h)=t(g)$;
\item  a global section $\textbf{1}:\mathcal{G}_{0}\to\mathcal{G}_{1}$ called \emph{unit} that satisfies  
$t(\textbf{1}(x))=x=s(\textbf{1}(x))$, $\mathbf{1}_{x}h=h$ and $g\mathbf{1}_{x}=g$ for all $h\in t^{-1}(x)$ and $g\in s^{-1}(x)$; 
\item  a diffeomorphism  $i:\mathcal{G}_{1}\to\mathcal{G}_{1}$, $i(g)=g^{-1}$ called \emph{inverse map} that satisfies $s(g^{-1})=t(g)$, $t(g^{-1})=s(g),$ $g g^{-1}=\textbf{1}_{t(g)},$
$g^{-1}g=\textbf{1}_{s(g)}.$ 
\end{enumerate}
\end{definition}
A Lie groupoid  $\mathcal{G}=\mathcal{G}_{1}\rightrightarrows \mathcal{G}_{0}$    
induces a singular foliation
on $\mathcal{G}_{0}$ whose leaves are the connected
components of the \emph{orbits} of $\mathcal{G}$, i.e.,
$\mathcal{G}(x)=\{t (s^{-1}(x) ) \}.$
A Lie groupoid $\mathcal{G}=\mathcal{G}_{1}\rightrightarrows \mathcal{G}_{0}$ is called \emph{proper Lie groupoid} if 
the map $\psi:\mathcal{G}_{1}\times\mathcal{G}_{1}\to\mathcal{G}_{0}\times \mathcal{G}_{0}$ defined as $\psi(g)=(s(g),t(g))$ is proper. 

\begin{example}[Holonomy groupoid]
\label{MALEX-ex-holonomy-groupoid}
Consider an Euclidean bundle $\mathbb{R}^{n}\to E\to B$ with a compatible connection $\nabla^{\tau}.$ 
Given a piece-wise smooth curve $\alpha:[0,1]\to B$, let $\|_{\alpha}$ be  the $\nabla^{\tau}$-parallel transport along $\alpha$.
In this case:
\begin{enumerate}
\item $\mathcal{G}_{0}=B$; 
\item $\mathcal{G}_{1}=\{\|_{\alpha}, \,  \, \forall \alpha:[0,1]\to B\, \, \mathrm{piece wise \, smooth} \}$;
%%\item $\mathcal{G}_{0}=B$ 
\item the source is $s(\|_{\alpha})$=$\alpha(0)$, the target is $t(\|_{\alpha})=\alpha(1);$
\item the multiplication is  produced with concatenation, i.e., $m\big(\varphi_{\beta},\varphi_{\alpha}\big)=\varphi_{\beta*\alpha}$;
\item the unit $\textbf{1}_{\alpha(0)}=Id$;
\item the inverse: $(\|_{\alpha})^{-1}=\|_{\alpha^{-1}}$. 
%\item the unit $\textbf{1}_{\alpha(0)}=Id$;
%\item the multiplication is  the composition, i.e., $m\big(\varphi_{\beta},\varphi_{\alpha}\big)=\varphi_{\beta*\alpha}$
\end{enumerate}
  \end{example}
The above Lie groupoid is not appropriate to describe  the parallel transport of vectors of $E$, since
$\mathcal{G}_0=B.$ In order to correct this problem, we need to construct a new Lie groupoid so that the set of objects  is $E$. 
%using the previous holonomy groupoid. 
At the same time, we briefly review (in a concrete example)  how to construct a transformation Lie groupoid of a representation  
$\mu: \mathcal{G}_{1}\times_{\mathcal{G}_0} E\to E$  of a Lie groupoid $\mathcal{G}_{1}\rightrightarrows \mathcal{G}_{0}.$ 

\begin{example}[Transformation holonomy groupoid]
\label{MALEX-ex-transformation-holonomy-groupoid}
Consider the notation of  Example  \ref{MALEX-ex-holonomy-groupoid}. 
Set $\mathcal{G}_{1}\times_{B} E=\{(g,v_x)\in \mathcal{G}_{1}\times E \, | s(g)=x=\pi(v_x) \}.$ We define 
\emph{the representation} $\mu: \mathcal{G}_{1}\times_{B} E\to E$ as  
$\mu\big(\|_{\alpha}, v_{\alpha(0)}\big)=\|_{\alpha}v_{\alpha(0)}$ and the transformation
holonomy groupoid  $\mathrm{Hol}^{\tau}=\mathcal{G}\ltimes E$  as:
\begin{enumerate}
\item $(\mathcal{G}\ltimes E)_{0}=E$;
\item $(\mathcal{G}\ltimes E)_{1}=\{(\|_{\alpha}, v_{\alpha(0)})\in \mathcal{G}_{1}\times_{B} E  \}$;
%\item $(\mathcal{G}_{1}\ltimes E)_{0}=E$,
\item for $g=\big(\|_{\alpha},v_{\alpha(0)}\big)$ we have $s(g)=v_{\alpha(0)}$ and $t(g)=\mu\big(\|_{\alpha},v_{\alpha(0)}\big)$;
%\item $\big(\|_{\alpha},v_{\alpha(0)}\big)^{-1}=\big(\|_{\alpha^{-1}},\|_{\alpha}v_{\alpha(0)}\big)$ and $\textbf{1}(v)=(\textbf{1}_{\pi(v)},v)$
\item  for $w=\|_{\alpha}v$, we set the multiplication as  $m\big( (\|_{\beta},w),(\|_{\alpha},v)\big)=(\|_{\beta*\alpha},v)$;
\item the unit $\textbf{1}(v)=(\textbf{1}_{\pi(v)},v)$;
\item the inverse $\big(\|_{\alpha},v_{\alpha(0)}\big)^{-1}=\big(\|_{\alpha^{-1}},\|_{\alpha}v_{\alpha(0)}\big)$.
\end{enumerate}
The orbits of $\mathrm{Hol}^{\tau}$ is a singular Riemannian foliation $\mathcal{F}^{\tau}$ on $E$  with respect to the Sasaki metric. 
\end{example}

%In fact consider the   \textbf{holonomy groupoid} $\mathrm{Hol}^{\tau}$, i.e., 
% $\mathrm{Hol}^{\tau}(v)=\{\|_{\alpha}v\in E, \alpha\in 
%C^{\infty}([0,1],B)$ $\pi(v)=\alpha(0)\}$ where 
%$\|_{\alpha}$ is the $\nabla^{\tau}$-parallel transport 
%along a curve $\alpha$ on $B$. 

\begin{theorem}[\cite{alexandrino2021lie}]
\label{MALEX-theorem-modelo-semi-local}
Consider the foliations $\mathcal{F}$ and $\mathcal{F}^{\ell}$ on the normal bundle $E=\nu(B)$ of a closed leaf $B$.
Let $\nabla^{\tau}$ be a Sasaki connection compatible with these foliations (i.e., so that the induced distribution $\mathcal{T}$ is tangent to them). 
Let  $K_{p}^{0}$ be the connected Lie group 
whose Lie algebra is associated to   the Killing vector fields of the linearization of infinitesimal foliation $\mathcal{F}_{p}$ on $E_{p}.$ 
   Then:
\begin{enumerate}
\item[(a)] $\mathcal{F}=\mathrm{Hol}^{\tau}(\mathcal{F}_{p})$
\item[(b)] $\mathcal{F}^{\ell}=\mathrm{Hol}^{\tau}(\{K_{p}^{0}(v) \}_{v\in E_{p}})$
\end{enumerate}
%where and $K_{p}^{0}$ is the connected Lie group 
%whose Lie algebra is associated to   the Killing
%vector fields of the linearization of $\mathcal{F}_{p}$ on $E_{p}$ 

\end{theorem}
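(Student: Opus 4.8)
My plan is to prove the two identities by comparing leaves directly, using that both sides are already known to be homothety-invariant singular foliations on $E$ (for the right-hand sides this is Example~\ref{MALEX-ex-transformation-holonomy-groupoid}) which restrict to the prescribed foliation on a single fibre $E_p$. The backbone of the argument is the following invariance statement, which I would establish first: \emph{$\nabla^\tau$-parallel transport $\|_\alpha$ along any piecewise smooth curve $\alpha$ in $B$ carries the leaf of $\mathcal{F}$ (resp.\ of $\mathcal{F}^\ell$) through $v\in E_{\alpha(0)}$ onto the leaf through $\|_\alpha v$.} For this I would use that, by the very construction of the Sasaki connection recalled before Definition~\ref{MALEXdefinition-Lie-groupoids}, the distribution $\mathcal{T}$ is generated near each point by linearizations $\vec{X}^\ell$ of vector fields $\vec{X}$ tangent to $\mathcal{F}$ together with fibrewise vector fields tangent to $\mathcal{F}_p$; all of these are tangent to $\mathcal{F}^\ell\subset\mathcal{F}$ and, being foliate, have flows permuting the leaves of $\mathcal{F}$ and of $\mathcal{F}^\ell$. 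Since the $\mathcal{T}$-horizontal lift of a vector field on $B$ is (locally) such a combination and its flow is exactly $\nabla^\tau$-parallel transport up to a fibrewise isometry (Lemma~\ref{MALEX-lemma-fluxo-isometiras}), the claimed invariance follows; homothety-invariance of $\mathcal{F}|_U$ and $\mathcal{F}^\ell$ is used to propagate it from a germ along $B$ to all of $E$.

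Granting this, I would prove (a) by a double inclusion of leaves through a fixed $v\in E$, with $p=\pi(v)$. For ``$\subseteq$'': if $L$ is the $\mathcal{F}$-leaf through $v$ then $\mathcal{T}_v\subset T_vL$ and $T_vL=\mathcal{T}_v\oplus\big(T_vL\cap T_vE_p\big)$, the second summand being tangent to the infinitesimal foliation $\mathcal{F}_p$; hence $L$ is locally generated by $\nabla^\tau$-parallel transport and by motion inside $\mathcal{F}_p$-plaques, and both operations stay inside the $\mathrm{Hol}^\tau$-orbit of the $\mathcal{F}_p$-plaque through $v$, i.e.\ $L\subseteq \mathrm{Hol}^\tau(\mathcal{F}_p)(v)$. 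For ``$\supseteq$'': a point of $\mathrm{Hol}^\tau(\mathcal{F}_p)(v)$ is reached from $v$ by finitely many steps, each either a move inside an $\mathcal{F}_p$-plaque (which stays in $L$ because $\mathcal{F}_p\subseteq \mathcal{F}|_{E_p}$) or a $\nabla^\tau$-parallel transport along a curve in $B$ (which stays in $L$ by the invariance above); thus $\mathrm{Hol}^\tau(\mathcal{F}_p)(v)\subseteq L$. This gives (a), and it also shows that $\mathrm{Hol}^\tau(\mathcal{F}_p)$ is independent of the chosen fibre, since $B$ is a connected leaf so $\mathrm{Hol}^\tau$ is transitive on $B$. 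Part (b) is the same argument run with $\mathcal{F}^\ell$, the inclusion $\mathcal{T}\subset T\mathcal{F}^\ell$, and the homogeneous fibre foliation in place of $\mathcal{F}$, $\mathcal{T}\subset T\mathcal{F}$ and $\mathcal{F}_p$; here one only has to note that, by Definition~\ref{MALEX-definition-linearized-foliation} and Example~\ref{MALEX-example-linearized-vector-field}, the restriction of $\mathcal{F}^\ell$ to $E_p$ is precisely $\{K_p^0(v)\}_{v\in E_p}$.

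The delicate point, and the one I would spend the most care on, is the invariance under $\nabla^\tau$-parallel transport: it is not enough that $\mathcal{T}$ be \emph{pointwise} contained in $T\mathcal{F}$, one genuinely needs the holonomy maps of $\nabla^\tau$ to permute the leaves of $\mathcal{F}$ and $\mathcal{F}^\ell$. This is exactly where the specific recipe for $\mathcal{T}$ — linearize a distribution tangent to $\mathcal{F}$, as in Example~\ref{MALEX-example-linearized-vector-field} and Definition~\ref{MALEX-definition-linearized-foliation} — is essential, because linearizations of foliate fields are again foliate, and where Lemma~\ref{MALEX-lemma-fluxo-isometiras} and the homothety-invariance of the model are needed to identify these flows with parallel transport and to extend the statement from a tube around $B$ to the whole normal bundle $E$.
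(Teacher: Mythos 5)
The paper does not prove Theorem~\ref{MALEX-theorem-modelo-semi-local}: it is recalled verbatim from \cite{alexandrino2021lie} as part of the background, so there is no in-text proof to compare yours against. On its own merits your sketch is sound and follows what is essentially the standard argument: it correctly identifies the crux as the invariance of the leaves of $\mathcal{F}$ and $\mathcal{F}^{\ell}$ under $\nabla^{\tau}$-parallel transport, derives that from the fact that $\mathcal{T}$-horizontal lifts are (up to a vertical correction tangent to $\mathcal{F}^{\ell}$) linearizations of foliate fields whose flows preserve each leaf, and then closes both inclusions by decomposing $T_vL=\mathcal{T}_v\oplus\bigl(T_vL\cap T_vE_{\pi(v)}\bigr)$ and chaining plaque moves with parallel transports; the identification $\mathcal{F}^{\ell}|_{E_p}=\{K_p^0(v)\}$ needed for (b) is exactly the content of Definition~\ref{MALEX-definition-linearized-foliation}. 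The only point worth making explicit in a full write-up is the one you gesture at: pointwise containment $\mathcal{T}\subset T\mathcal{F}$ alone does not give leaf-invariance of the holonomy maps; you need that a smooth field everywhere tangent to the (immersed, self-intersection-free) leaves has integral curves that stay in the leaf through their initial point, which holds here by uniqueness of integral curves and is what upgrades "tangent" to "foliate" in your step one.
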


The above result already suggest that the leaves of 
$\mathcal{F}^{\ell}$ may coincide with the orbits of a (Lie) groupoid
 $\mathcal{G}^{\ell}$.  %%This in fact true , as we are going to review next.  

%%%%%%%%%%%%%

\subsubsection{Linear Lie groupoid }
\label{MALEX-sec-Linear Lie groupoid }

Let us  recall how to construct  the linear groupoid 
$\mathcal{G}^{\ell}\rightrightarrows E=\nu(B)$ whose orbits
 are  leaves of the orbit-like foliation $\mathcal{F}^{\ell}$. 

We start by considering 
$O(E)$ the orthogonal fame bundle of $E=\nu(B)$. We recall that 
the leaves of the foliation $\mathcal{F}^{\ell}$ 
are orbits of flows of linearized vector fields, that induce   isometries between
the fibers on $E$ (see Lemma \ref{MALEX-lemma-fluxo-isometiras}),  and hence  can be lifted to flows on $O(E)$.  The 
orbits of these flows on the frame bundle are leaves of a regular foliation
$\widetilde{\mathcal{F}}$ on  $O(E).$

Note that the action of each isotropic group $K_{p}^{0}$
induces a free action on $O(E_{p})$ and the orbits 
of this action   are tangent to the leaves of 
$\widetilde{\mathcal{F}}$. 
 Also the connection $\nabla^{\tau}$ induces a linear
distribution $\widetilde{\mathcal{T}}$ tangent to the leaf 
of $\widetilde{\mathcal{F}}$. 
The tangent space of $\widetilde{\mathcal{F}}$ through $\xi\in O(E)$ 
can be described as 
$T_{\xi}\widetilde{L}_{\xi}=T K_{p}^{0}(\xi)\oplus 
\widetilde{\mathcal{T}}_{\xi}. $
We can induces a Riemannian metric $\tilde{g}$ on $T_{\xi}\widetilde{L}$ as follows: 
first define the  metric on $\widetilde{\mathcal{T}}$ so that $d\pi:\widetilde{\mathcal{T}}_{\xi}\to (TB,\mathrm{g})$
turns to be an isometry,  then we induces the metric on the orbits of  $K_{p}^{0}(\xi)$ using a bi-invariant metric.
It follows direct from this definition that the leaves of $\widetilde{\mathcal{F}}$ are locally isometric. 
By construction (and using the fact that $B$ is a leaf) one can check that the leaves of $\widetilde{\mathcal{F}}$ have trivial
holonomy and are in fact diffeomorphic (and hence isometric) to each other.  
%Let us denote $\tilde{\nu}$ the induces Riemannian volume on the fibers. 

 Set  
$\mathcal{G}:=\mathrm{Hol}(\widetilde{\mathcal{F}})/O(n)\rightrightarrows  O(E)/O(n)=B.$ 
%%%%%%%MALEX-novo
Here $\mathrm{Hol}(\widetilde{\mathcal{F}})$ denotes  the holonomy grupoid of the foliation  $\widetilde{\mathcal{F}},$
that is defined as follows: the set of objects is the ambient space of the foliation $\widetilde{\mathcal{F}}$, 
an arrow  $\tilde{g}\in \big(\mathrm{Hol}(\widetilde{\mathcal{F}}) \big)_{1}$ 
is a class of a path in a leaf of $\widetilde{L}\in\widetilde{\mathcal{F}}$ joining $\tilde{s}(\tilde{g})\in \widetilde{L}$ 
with $\tilde{t}(\tilde{g})\in \widetilde{L}$, where the equivalence relation identifies  
paths inducing the same germ of diffeomorphisms 
sliding transversals along the paths; the multiplication, unit and  inverse  maps 
are defined by  concatenations, homotopy of a constant paths and  the inverse paths.

%an arrow 
%$g\in \big(\mathrm{Hol}(\widetilde{\mathcal{F}}) \big)_{1}$ is a class of a path in a leaf of F, where the equivalence relation
%identifies leafwise homotopic paths and paths inducing the same germ of diffeomorphisms sliding transversals along the paths

% is a path in a leaf of $\mathcal{F}$, where the equivalence relation
%identifies leafwise homotopic paths and paths inducing the same germ of diffeomorphisms sliding transversals along the paths 
% is an homotopy class of curves joining two points $s(g)$ and $t(g)$ in the same leaf,
%and multiplication, inverse, unit maps are defined respectively as composition, inverse and identity of the homotopy class of curves contained in leaves.  

%%%%%%%%%%%%MALEX-novo
The Lie algebroid of $\mathcal{G}$ turns to be  $\mathcal{A}=T\widetilde{\mathcal{F}}/O(n)\to B$ and the metric $\tilde{\mathrm{g}}$ 
induces a Riemannian metric on $\mathcal{A}.$
% that has a natural Riemannian metric 
%(induced by the bi invariante metric of isotropy groups and the distribution  of a linear connection $\mathcal{T}$ on $O(E)$
% compatible with the Euclidean metric on  $E\to L$ and so that $\mathcal{T}\subset T\widetilde{F}$). 
Let us denote $\tilde{\nu}$ the density associated to the  Riemannian metric. 

%Let $\omega$ be the density associated to the Riemannian metric.    

Since $E=O(E)\times_{B}\mathbb{R}^{n},$ we can define the representation  
$\mu:\mathcal{G}_{1}\times_{\mathcal{G}_{0}} E\to E$ as $\mu(g,[\xi,e])=[\tilde{t}(\tilde{g}),e]$, where $\tilde{g}\in \mathrm{Hol}(\widetilde{\mathcal{F}})$ 
is the unique representative  of $g$ so that $\tilde{s}(\tilde{g})=\xi$. 
The linear Lie groupoid $\mathcal{G}^{\ell}$ is defined as the  transformation Lie groupoid of the representation $\mu$, i.e.,
$\mathcal{G}^{\ell}=\mathcal{G}\ltimes E.$  Recall that the target and source maps are  
$\mathrm{s}^{\ell}\big((g,v)\big)=v$ and $\mathrm{t}^{\ell}((g,v))=\mu(g,v).$ 
Note that $(\mathrm{s}^{\ell})^{-1}(v_x)=(\mathrm{s}^{-1}(x),v_x)$ where 
 $\mathrm{s}$  is the  source map of $\mathcal{G}_{1}\rightrightarrows B.$  
Let us sum up  the relation between the source fibers and leaves of $\widetilde{\mathcal{F}}.$  
\begin{lemma}
\label{lemma-facts-fiber-source-isometric}

\

\begin{enumerate}
\item The leaves of $\widetilde{\mathcal{F}}$ have trivial holonomy and are isometric to each other. 
\item The fiber of source map   $\mathrm{s}^{\ell}$ are isometric to (each)  leaf of $\widetilde{\mathcal{F}}.$
\end{enumerate}
\end{lemma}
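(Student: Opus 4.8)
The plan is to establish (1) by unwinding the construction of $\widetilde{\mathcal{F}}$ on $O(E)$, and then to deduce (2) from (1) essentially formally. For (1), the key observation is that the foot-point projection $\pi$ restricts on each leaf $\widetilde{L}_\xi$ to a locally trivial fibration over $B$ with fibre the free $K^0_p$-orbit $K^0_p\cdot\xi\cong K^0_p$ (where $p=\pi(\xi)$), whose horizontal distribution is $\widetilde{\mathcal{T}}$, i.e.\ a \emph{flat} Ehresmann connection, its parallel transport being that of $\nabla^\tau$ lifted to frames; local triviality uses that $B$, hence each $\widetilde{L}_\xi$, is closed. From this I would read off both assertions of (1). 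First, the right $O(n)$-action on $O(E)$ permutes the leaves, is free and transitive on each fibre $O(E_p)$, and preserves $\tilde{g}$: it is $\pi$-equivariant, so preserves the metric on $\widetilde{\mathcal{T}}$, and it commutes with the $K^0_p$-actions and leaves the bi-invariant metric on their orbits invariant; combining it with horizontal transport along paths in $B$ shows every leaf is an $O(n)$-translate of a fixed one, hence they are mutually isometric. Second, for the holonomy I would work on a transversal sitting inside $O(E_p)$ transverse to $K^0_p\cdot\xi$: a loop in $\widetilde{L}_\xi$ is, up to homotopy in the leaf, a product of loops in the fibre $K^0_p$ (which fix the transversal) and of horizontal lifts of loops $\gamma$ in $B$ that close up in the leaf; closing up forces the lifted $\nabla^\tau$-holonomy $\|_\gamma$ to send $\xi$ into $K^0_p\cdot\xi$, whence the induced return map $[\xi']\mapsto[\|_\gamma\cdot\xi']$ on $O(E_p)/K^0_p$ is the identity germ near $[\xi]$. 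This is the step where the hypothesis that $B$ be a leaf of $\mathcal{F}$ (and not merely a submanifold) enters, guaranteeing that the holonomy of $B$ preserves the infinitesimal-foliation data, and in particular the subgroups $K^0_p$.

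For (2) I would use the identity $(\mathrm{s}^\ell)^{-1}(v_x)=\mathrm{s}^{-1}(x)\times\{v_x\}$ recorded above, so the claim reduces to: the source fibre $\mathrm{s}^{-1}(x)$ of $\mathcal{G}=\mathrm{Hol}(\widetilde{\mathcal{F}})/O(n)$ is isometric to a leaf of $\widetilde{\mathcal{F}}$. Fix $\xi\in O(E_x)$. Since $O(n)$ acts freely on $O(E)$, every arrow of $\mathcal{G}$ with source $x$ has a unique representative $\tilde g\in\mathrm{Hol}(\widetilde{\mathcal{F}})$ with $\tilde s(\tilde g)=\xi$, so $\mathrm{s}^{-1}(x)\cong\tilde s^{-1}(\xi)$. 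By (1) the leaf $\widetilde{L}_\xi$ has trivial holonomy, hence an arrow of $\mathrm{Hol}(\widetilde{\mathcal{F}})$ issuing from $\xi$ is determined by its endpoint, and the target map restricts to a diffeomorphism $\tilde t\colon\tilde s^{-1}(\xi)\to\widetilde{L}_\xi$. Finally, the Riemannian metric on the source fibres induced by $\tilde{\mathrm{g}}$ through the Lie algebroid $\mathcal{A}=T\widetilde{\mathcal{F}}/O(n)$ is set up precisely so that $\tilde t$ is a Riemannian submersion onto the corresponding leaf; as $\tilde t$ is here a diffeomorphism this makes it an isometry, and this survives the identification $\mathrm{s}^{-1}(x)\cong(\mathrm{s}^\ell)^{-1}(v_x)$. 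Together with the mutual isometry of the leaves from (1), this yields (2).

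The step I expect to be the real work is the holonomy computation in (1): making rigorous that $\widetilde{L}_\xi\to B$ is a locally trivial $K^0_p$-bundle with flat connection $\widetilde{\mathcal{T}}$, and extracting from the assumption that $B$ is a leaf the required normalising property of the holonomy of $B$ on the subgroups $K^0_p$. Once that structural fact is in hand, both halves of (1), and then all of (2), are short — essentially bookkeeping with the $O(n)$-quotient.
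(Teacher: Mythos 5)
Your proof is correct in substance and, notably, supplies an actual argument where the paper does not: Lemma \ref{lemma-facts-fiber-source-isometric} appears in the source with no proof, only the preceding assertion that ``by construction (and using the fact that $B$ is a leaf) one can check'' the two claims. One slip: the Ehresmann connection $\widetilde{\mathcal{T}}$ on $\widetilde{L}_\xi\to B$ is \emph{not} flat in general --- the holonomy of $\nabla^{\tau}$ is precisely what generates the horizontal part of $\mathcal{F}^{\ell}$, so it is typically nontrivial of positive dimension; you never actually use flatness, only completeness of parallel transport (which is what yields local triviality of $\widetilde{L}_\xi\to B$), so this is cosmetic. Your holonomy computation does close up, the decisive point being that $\|_{\gamma}\xi\in K^{0}_{p}\cdot\xi$ forces $\|_{\gamma}\in K^{0}_{p}$ itself, so that $\xi'\mapsto\|_{\gamma}\xi'$ descends to the genuine identity of $K^{0}_{p}\backslash O(E_p)$ and not merely to an isometry fixing $[\xi]$; I would spell that one line out, since ``fixes the basepoint'' alone would not suffice. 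That said, there is a shorter route covering both halves of (1) simultaneously: the free right $O(n)$-action on $O(E)$ preserves $\widetilde{\mathcal{F}}$ and $\tilde{g}$ and is simply transitive on each fibre $O(E_p)$; since every leaf surjects onto $B$, the action is transitive on the leaf space, giving mutual isometry at once; and choosing the transversal at $\xi$ inside $O(E_p)$, every nearby transversal point is $\xi a$ with $a$ near the identity, and for any loop $c$ in $\widetilde{L}_{\xi}$ the translate $c\cdot a$ is a loop in $\widetilde{L}_{\xi a}$ shadowing $c$ and returning to $\xi a$, so the holonomy germ of $c$ is the identity. This is Molino's transverse-parallelism argument and avoids both the fibration bookkeeping and the $\pi_1$ analysis, while making the role of ``$B$ is a leaf'' (surjectivity of $\pi|_{\widetilde{L}_\xi}$ onto the connected $B$, and invariance of the $K^0_p$ under the linearized holonomy) transparent. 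Your part (2) is exactly the intended reduction: $(\mathrm{s}^{\ell})^{-1}(v_x)\cong\tilde{s}^{-1}(\xi)$, trivial holonomy makes $\tilde{t}\colon\tilde{s}^{-1}(\xi)\to\widetilde{L}_{\xi}$ a diffeomorphism, and the metric coming from $\mathcal{A}=T\widetilde{\mathcal{F}}/O(n)$ is set up so that this map is a local isometry.
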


%\begin{proposition}
%Let $\mathcal{G}^{\ell}\rightrightarrows \mathrm{Tub}(L)$ be the linear Lie groupoid that 
%generates the leaves of the orbit-like foliation $\mathcal{F}$ on $\mathrm{Tub}(L)$. 
%Let $l$ be a linear functional on $U$. Assume that
%\begin{enumerate}
%\item[(a)] $l(b)=0$, for all $\mathcal{F}$-basic function $b\in C^{\infty}(\mathrm{Tub}_{\delta/2}(L)$
%\item[(b)] $l(f\circ \varphi^{\mathcal{F}}))= l(f) $ for each $f\in C^{\infty}(\mathrm{Tub}_{\delta/2}(L)$ 
%and for  each $\varphi^{\mathcal{F}}$ that is a composition do the flows of vector fields target related to the right invariant  
%vector fields of $\mathcal{G}^{\ell}$. 
%\end{enumerate}
%Then $l=0$
%\end{proposition}
%\begin{proof}
%Fist note that condition (b) is equivalent to 
%\begin{equation}
%\label{eq-equivalencia-b-campos-target-relacionados}
%l(f\circ \mathrm{t}\circ \varphi^{l}\circ \mathrm{i})=l(f\circ \mathrm{t}\circ\mathrm{i}) 
%\end{equation}
%where $\varphi^{\ell}$ is  a composition do the flows of right invariant vector fields  of $\mathcal{G}^{\ell}.$
%Note that since the fibers of $\mathcal{G}^{\ell}$ are connected, each point $g\in \mathcal{s}^{-1}(x)$
%is $g=\varphi^{l}(\mathrm{i}(x))$ for some $\varphi^{\ell}$. 
%%%%%%%%%%%%%%%%%%%

%%%%%%%%

\subsection{Linearized vector fields and volume}

\begin{lemma} 
\label{MALEX-flows-preserve-volume}
Given a Sasaki metric on $\mathbb{R}^{k}\to E\to B$ (possibly changing the metric on $B$ when $\dim B=1$), 
there exists a  module of  linearized vector fields  that  preserve  the volume $\nu$ (induced by the Sasaki metric). The composition of their
flows is transitive on the leaves of $\mathcal{F}^{\ell}.$  
 \end{lemma}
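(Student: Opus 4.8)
The plan is to exploit the structure of the linear Lie groupoid $\mathcal{G}^\ell = \mathcal{G}\ltimes E$ described in Section \ref{MALEX-sec-Linear Lie groupoid }, together with the fact that $\mathcal{F}^\ell$ is an \emph{infinitesimally homogeneous} foliation whose leaves are, semi-locally, orbits of flows of linearized Killing vector fields. First I would recall that at each point $p\in B$ the infinitesimal foliation is the orbit partition $\{K_p^0(v)\}_{v\in E_p}$ of a \emph{compact} connected group $K_p^0\subset O(E_p)$, and that the linearized vector fields tangent to $\mathcal{F}^\ell$ split, along the description $T_\xi\widetilde{L}_\xi = TK_p^0(\xi)\oplus \widetilde{\mathcal{T}}_\xi$, into a ``vertical'' part generating the $K_p^0$-action on the fibers and a ``horizontal'' part coming from the $\nabla^\tau$-parallel transport encoded in the connection distribution $\mathcal{T}$. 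The strategy is to produce volume-preserving generators of each type separately and then argue that their joint flows are transitive on the leaves of $\mathcal{F}^\ell$, using Theorem \ref{MALEX-theorem-modelo-semi-local}(b) and Lemma \ref{lemma-facts-fiber-source-isometric}.

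For the vertical (fiberwise) directions: since $K_p^0$ is compact, its Lie algebra $\mathfrak{k}_p^0$ acts on each fiber $(E_p, g_p)$ by Killing fields of the flat fiber metric, hence linearly, and the Sasaki metric $\mathrm{g}^0$ is built so the fibers carry exactly the flat metric $g_p$; a linear isometry of a flat Euclidean space is volume-preserving, so every vertical linearized vector field preserves $\nu$. For the horizontal directions: here one uses that $\nabla^\tau$ is a metric connection (``$\nabla^\tau$ is compatible with the Euclidean metric on the fibers of $E$'', as stated after the construction of the Sasaki metric), so $\nabla^\tau$-parallel transport is a fiberwise isometry, i.e. Lemma \ref{MALEX-lemma-fluxo-isometiras} applies; the horizontal linearized vector field is the horizontal lift of a vector field on $B$, and since $\pi\colon(E,\mathrm{g}^0)\to(B,\mathrm g)$ is a Riemannian submersion with the fibers carrying a flat metric whose fiberwise volume is locally constant along parallel transport, the Sasaki volume $\nu$ is (up to the pullback of $\mathrm{vol}_{\mathrm g}$) a product, and the horizontal flow pushes fibers isometrically onto fibers. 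The only subtlety is that the horizontal lift of an \emph{arbitrary} vector field on $B$ need not be divergence-free for $\nu$ — its divergence picks up the divergence of the underlying field on $(B,\mathrm g)$. This is exactly why the statement allows one to \emph{change the metric on $B$ when $\dim B = 1$}: on a $1$-dimensional $B$ one rescales the metric so that the chosen generating field on $S^1$ (or $\real$) becomes unit-speed, hence Killing, hence volume-preserving; in higher dimensions one instead chooses the module of linearized fields to project to \emph{Killing} fields of $(B,\mathrm g)$ — or one simply notes that transitivity on each leaf of $\widetilde{\mathcal F}$ only needs finitely many local generators near each point, which can be chosen with the horizontal part being a Killing field of a suitable local product metric, and then the leaves of $\mathcal F^\ell$, being the $\mathcal G^\ell$-orbits, are swept out by these.

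I would then assemble the argument: let $\mathfrak{X}^\ell$ be the module generated over $C^\infty$ by (i) the vertical linearized fields coming from the fiberwise $K_p^0$-actions and (ii) horizontal linearized lifts of Killing (resp., in the $\dim B=1$ case, unit-speed) vector fields on $B$; each generator preserves $\nu$ by the two paragraphs above, and the flow of a $\nu$-preserving field preserves $\nu$, so the whole composition group of flows preserves $\nu$. Transitivity on the leaves of $\mathcal F^\ell$ follows because, by Theorem \ref{MALEX-theorem-modelo-semi-local}(b), a leaf of $\mathcal F^\ell$ is $\mathrm{Hol}^\tau$-saturation of a $K_p^0$-orbit: moving within a fiber is achieved by the vertical flows (here I use that $K_p^0$ is \emph{connected}, so its orbits are reached by flowing along $\mathfrak k_p^0$), and moving between fibers along the leaf is achieved by $\nabla^\tau$-parallel transport, which is the time-one flow of horizontal linearized fields along curves in $B$; since any curve in $B$ decomposes into pieces along which a chosen Killing/unit-speed generating field is available (covering $B$ by finitely many such charts and concatenating), parallel transport along any path is realized by a finite composition of such flows. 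Hence the composition of the flows in $\mathfrak{X}^\ell$ is transitive on each leaf of $\mathcal F^\ell$, completing the proof.

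The main obstacle I expect is precisely the bookkeeping in the horizontal direction: ensuring that one can choose enough horizontal linearized vector fields that are simultaneously (a) tangent to $\mathcal F^\ell$, (b) $\nu$-preserving, and (c) together with the vertical ones transitive on leaves — the tension being that divergence-free horizontal lifts correspond to Killing fields downstairs, of which there may be too few on a general $(B,\mathrm g)$, which is why the statement hedges with the ``change the metric on $B$ when $\dim B=1$'' clause and, implicitly, with working locally/semi-locally where product structures make volume-preservation automatic. Verifying that these local choices patch to a genuine module whose flows act transitively — rather than needing a global Killing field — is the delicate point.
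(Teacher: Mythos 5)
Your overall two-step strategy --- fiberwise generators coming from the connected isotropy groups $K_p^0$, plus horizontal generators whose flows move points along $B$, assembled via Theorem \ref{MALEX-theorem-modelo-semi-local} and the connectedness of the source fibers --- is the same as the paper's, and your vertical (Type-2) argument is essentially correct. The gap is in the horizontal step. You require the base components of your horizontal generators to be Killing fields of $(B,\mathrm{g})$ (or, locally, of ``a suitable local product metric''), but a generic closed $(B,\mathrm{g})$ with $\dim B\ge 2$ carries no nontrivial Killing fields at all, so your proposed module may contain no horizontal generators and transitivity on the leaves fails. Moreover the condition you actually need is strictly weaker than Killing: since the Sasaki volume is $\pi^{*}\nu_{B}\wedge\nu_{E}$ and a linearized flow already acts by isometries between the fibers (Lemma \ref{MALEX-lemma-fluxo-isometiras}), a horizontal linearized field preserves $\nu$ as soon as its projection to $B$ preserves $\nu_{B}$, i.e.\ is divergence-free. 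You conflate ``divergence-free'' with ``Killing'' when you say that divergence-free lifts ``correspond to Killing fields downstairs''; this is the wrong dichotomy, and your hedge about local product metrics does not repair it, because a cutoff of a Killing field of an auxiliary metric is in general not divergence-free for the true $\nu_{B}$.

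The missing construction, which is the heart of the paper's proof, is this: for $\dim B\ge 2$, around any $p\in B$ choose coordinates in which $\nu_{B}$ becomes $dx_{1}\wedge\cdots\wedge dx_{n}$ (Moser's theorem), take a Euclidean rotational Killing field $\widetilde{F}$ fixing $0$, and set $\vec{F}(x)=\rho(\|x\|)\widetilde{F}(x)$ for a radial cutoff $\rho$. Because $\widetilde{F}$ is tangent to the Euclidean spheres, the radial cutoff does not destroy divergence-freeness, so $\vec{F}$ is a compactly supported $\nu_{B}$-preserving field with prescribed nonzero value at $p$; extending it $\pi$-basically into the homothetic distribution $\mathcal{T}$ and linearizing produces the Type-1 generators, in abundance, with no global symmetry assumption on $(B,\mathrm{g})$. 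This also explains the $\dim B=1$ caveat correctly: on $S^{1}$ a nontrivial compactly supported divergence-free field does not exist, so one must change the metric to admit a global volume-preserving field, exactly as you guessed for that special case --- but the clause is not a license to demand Killing fields in higher dimensions.
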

\begin{proof}
%When $\dim B=1$, i.e, when $B$ is $S^{1}$ one can assume the existence of a vector field $G$ that preserve the metric. 
%So from now on let us 
%We are assuming here that $\dim B\geq 2$ 

It suffices to construct  two types  of linearized vector fields   
that preserve the  volume $\nu$:   
\begin{enumerate} 
\item[Type 1:]  $\vec{F}\in\mathfrak{X}(E)$  whose orbits  restricts to $B$ are transitive on $B$, 
\item[Type 2:]  $\vec{G}\in\mathfrak{X}(E)$  whose orbits fix the fibers of $E$ and are transitive on the infinitesimal foliation on the fibers.   
\end{enumerate}
    
\emph{Constructing type 1 vector fields} 

First we consider a vector field $\vec{F}\in \mathfrak{X}(B)$ that preserves the volume $\nu_{B}$ of $B$. 
If $\dim B=0$ there is nothing to do. If $\dim B=1$, i.e, if $B$ is the circle $S^{1}$, it is easy to see the existence of a global  vector field that 
preserves the volume (changing the metric of $B$ if necessarily). So let us assume that $\dim B\geq 2$ and let us review the construction that given  a point $p\in B$ 
and small neighborhood $W\subset B$ of $p$  there exists a vector field $\vec{F}\in \mathfrak{X}(B)$ with a support on  neighborhood $W$ that preserves the volume. 
Consider a coordinate system so that $p$ is identified with $0$ and $\nu_{B}$ with $d x_{1}\wedge \cdots \wedge d x_{n}.$ 
%In order to construct the vector field, given a point $q\in U\cap B$ we consider a coordinate system so that the volume form
%$\nu_{B}$ is $d x_{1}\wedge \cdots d x_{n}$. 
Let $\widetilde{F}$ be an (Euclidean)  Killing vector field that fixes $0$ and 
a smooth  non negative function $\rho:\mathbb{R}\to\mathbb{R}$, with small  compact support in $(-\epsilon,\epsilon)$ and $\rho(0)=1$.  
Since the flow of $\widetilde{F}$ preserves $\nu_{B}$, then the flow of  $\vec{F}(x)=\rho(\|x\|) \widetilde{F}(x)$ 
also preserves $\nu_{B}$ and  has  compact support. By pulling back $\vec{F}$ via the coordinate system,  
 we identify the vector field $\vec{F}$ with a vector field on $B$, that we are also denoting as $\vec{F}.$

Now we can extend  the vector field $\vec{F}$ on $B$ to a $\pi$-basic vector field on $E$ so that
  $ \vec{F}\in\mathfrak{X}(\mathcal{T}).$ Since  
	$\mathcal{T}\subset T\mathcal{F}^{\ell}$  is homothetic, the vector field $\vec{F}$ is homothetic $\mathcal{F}^{\ell}$-vector field. 
Therefore $\vec{F}$ is linearized vector fields and hence it flows induces isometries between the fibers.

Let  $\nu_{E}$ be the volume form on the fibers $E$, note that  
$\pi^{*}\nu_{B}\wedge \nu_{E} $ coincides with the volume $\nu$ of the Sasaki metric, because 
$\pi:E \to B$ is a Riemannian submersion. 
\footnote{This can be easily  checked using an adapted  orthonormal frame $\{\tau_{i}\}$ of $\tau$ and $\{e_{\alpha}\}$ tangent to the fibers of $E$
and see that the Sasaki volume and this volume form coincides. }
%%%
Since $\pi\circ\varphi_{t}^{F}=\varphi_{t}^{F}\circ\pi$ and $\varphi_{t}^{F}$ preserves $\nu_{B}$ we infer that
$$(\varphi_{t}^{F})^{*}\Big(\pi^{*}\nu_{B}\wedge \nu_{E}\Big)=\pi^{*}\nu_{B}\wedge \nu_{E}$$

\emph{Constructing type 2 vector fields:}

Consider the connected isotropy group $K_{p}^{0}$. By parallel transport with respect to $\nabla^{\tau}$ we can induce an action of
$\mu:K_{p}^{0}\times\pi^{-1}(W)\to \pi^{-1}(W)$ on a neighborhood  $W\subset B$ of $p$  that fixes the fibers, and act on each fiber isometrically. 
For $\xi$ in the Lie algebra of $K_{p}^{0}$, set $\vec{G}(x)=d\mu_{x}\big(f(\pi(x)) \xi\big)$ for some smooth non negative function $f$ with compact support on $W$ and so that $f(p)=1.$  
Since its flow acts  isometrically on each fiber and its projection on $B$ is 
the identity we have that: 
$$(\varphi_{t}^{G})^{*}\Big(\pi^{*}\nu_{B}\wedge \nu_{E}\Big)=\pi^{*}\nu_{B}\wedge \nu_{E}$$

\end{proof}

\begin{remark}
\label{MALEX-remark-groupoid-volume-preserve}
The construction of the Lie groupoid 
presented in Section \ref{MALEX-sec-Linear Lie groupoid }
can be done using the flows of linearized vector fields
that preserve the volume  of the Sasaki metric.
\end{remark}

%%%

\subsection{Avarage operator of $\mathcal{F}^{\ell}$ }

We now define the avarage operator $\mathrm{Av}:C^{\infty}_{c}\big(E^{\delta}\big)\to C^{\infty}_{c}\big(E^{\delta}\big)_{b} $
that projects (as we will se below) smooth functions with support on $E^{\delta}$ 
(identified via exponential map with  $\mathrm{Tub}_{\delta}(B)$)
onto basic functions with support on  $E^{\delta}$.

\begin{equation}
\label{MALEX-definition-AV-prova-malex}
\mathrm{Av}(f)(v_x)=\frac{1}{\mathcal{V}(x)}\int_{(\mathrm{s}^{\ell})^{-1}(v_x)} f\circ \mathrm{t}^{\ell} \, \tilde{\nu}=
\frac{1}{\mathcal{V}}\int_{(\mathrm{s})^{-1}(x)} f\circ \mu(g,v_x) \, \tilde{\nu}_{x}
\end{equation}
where $\mathcal{V}(x)=\int_{(\mathrm{s})^{-1}(x)} \, \nu_{x}.$
Here we used the fact that $(\mathrm{s}^{\ell})^{-1}(v_x)=(\mathrm{s}^{-1}(x),v_x)$, where $\mathrm{s}$ is the source
map of $\mathcal{G}_{1}\rightrightarrows B.$

\begin{lemma}
\label{MALEX-lemma-Av-eh-basico}
Consider $f\in C^{\infty}_{c}(E^{\delta})$. Then $\mathcal{V}$ is constant and  $Av(f)$ is a basic function.  
\end{lemma}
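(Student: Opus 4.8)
The plan is to establish the two assertions separately, beginning with the constancy of $\mathcal{V}(x)=\int_{(\mathrm{s})^{-1}(x)}\nu_x$. The key input is Lemma \ref{lemma-facts-fiber-source-isometric}, which tells us that all source fibers $(\mathrm{s})^{-1}(x)$ are isometric to a fixed leaf of $\widetilde{\mathcal{F}}$, hence isometric to each other; since $\mathcal{V}(x)$ is the total volume of $(\mathrm{s})^{-1}(x)$ computed with the metric $\tilde{\mathrm g}$ (equivalently, the integral of the density $\tilde\nu$ restricted to that fiber), isometric fibers have the same volume, so $\mathcal{V}(x)\equiv\mathcal{V}$ is constant on $B$. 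One should also note $B$ is connected (it is a leaf), so there is no issue about $\mathcal{V}$ being locally constant but globally multivalued. An alternative, more hands-on route — consistent with Remark \ref{MALEX-remark-groupoid-volume-preserve} and Lemma \ref{MALEX-flows-preserve-volume} — is to observe that the composition of flows of the volume-preserving linearized vector fields of Type 1 acts transitively on $B$ (their $B$-projections are transitive on $B$) and carries source fibers to source fibers while preserving $\tilde\nu$; transitivity plus invariance of the density again forces $\mathcal{V}$ to be constant. I would state the first route as the main argument since Lemma \ref{lemma-facts-fiber-source-isometric} is exactly tailored to it.

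Next I would show $\mathrm{Av}(f)$ is basic, i.e.\ constant along the leaves of $\mathcal{F}^\ell$ (equivalently, along orbits of $\mathcal{G}^\ell$). By Theorem \ref{MALEX-theorem-modelo-semi-local}(b) the leaves of $\mathcal{F}^\ell$ are the orbits of $\mathcal{G}^\ell=\mathcal{G}\ltimes E$, so it suffices to show that for any arrow $h$ of $\mathcal{G}^\ell$ with $\mathrm{s}^\ell(h)=v_x$ and $\mathrm{t}^\ell(h)=w_y$ one has $\mathrm{Av}(f)(v_x)=\mathrm{Av}(f)(w_y)$. The mechanism is the standard one for invariance of integrals over source fibers: right multiplication by $h$ gives a diffeomorphism $R_h:(\mathrm{s}^\ell)^{-1}(w_y)\to(\mathrm{s}^\ell)^{-1}(v_x)$ (precompose an arrow starting at $v_x$ with $h^{-1}$, or dually think of the $\mathcal{G}$-side $R_h:(\mathrm{s})^{-1}(y)\to(\mathrm{s})^{-1}(x)$), and one has the cocycle identity $\mathrm{t}^\ell\circ R_h=\mathrm{t}^\ell$ on source fibers because the target of a composite is the target of the left factor. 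Thus $\int_{(\mathrm{s}^\ell)^{-1}(v_x)} f\circ\mathrm{t}^\ell\,\tilde\nu = \int_{(\mathrm{s}^\ell)^{-1}(w_y)} (f\circ\mathrm{t}^\ell\circ R_h)\,R_h^*\tilde\nu = \int_{(\mathrm{s}^\ell)^{-1}(w_y)} (f\circ\mathrm{t}^\ell)\,R_h^*\tilde\nu$, and it remains to see $R_h^*\tilde\nu=\tilde\nu$ on the source fibers. This is the crucial point: one needs the density $\tilde\nu$ (coming from the metric $\tilde{\mathrm g}$ on the algebroid, lifted to $\widetilde{\mathcal{F}}$) to be right-invariant, i.e.\ a genuine Haar density for $\mathcal{G}$. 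This holds by construction — the metric $\tilde{\mathrm g}$ on $\widetilde{\mathcal{F}}$ makes all leaves locally isometric with trivial holonomy (Section \ref{MALEX-sec-Linear Lie groupoid }), so holonomy translations are isometries of the source fibers — or, again invoking Remark \ref{MALEX-remark-groupoid-volume-preserve}, because $\mathcal{G}$ can be built from volume-preserving flows. Combining, $\mathrm{Av}(f)(v_x)$ depends only on the $\mathcal{G}^\ell$-orbit of $v_x$, dividing by the constant $\mathcal{V}$ changes nothing, so $\mathrm{Av}(f)$ is basic.

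I expect the genuine obstacle to be the right-invariance of $\tilde\nu$ on source fibers, i.e.\ pinning down that the density built from $\tilde{\mathrm g}$ is a Haar system for $\mathcal{G}$; this is really where all the geometry in Section \ref{MALEX-sec-Linear Lie groupoid } (the bi-invariant metric on the $K_p^0$-orbit directions, the submersion metric on $\widetilde{\mathcal{T}}$, triviality of holonomy, mutual isometry of leaves of $\widetilde{\mathcal{F}}$) is being cashed in, and it must be argued carefully rather than asserted. The constancy of $\mathcal{V}$ and the cocycle bookkeeping with $\mathrm{t}^\ell$ are essentially formal once that is in hand. A minor technical point to dispatch is that $f$ has compact support in $E^\delta$ so that the fiber integrals converge and $\mathrm{Av}(f)$ again has support in $E^\delta$ (the orbits of $\mathcal{F}^\ell$ through points of $E^\delta$ stay in $E^\delta$ because $\mathrm{t}^\ell$ preserves the norm of normal vectors, the flows being fiberwise isometries).
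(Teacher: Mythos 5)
Your proposal is correct and follows essentially the same route as the paper: constancy of $\mathcal{V}$ from the mutual isometry of source fibers (Lemma \ref{lemma-facts-fiber-source-isometric}), and basicness via the change of variables $g_2\mapsto g_2g_1$ in the fiber integral using the representation identity $\mu(g_2,\mu(g_1,v_x))=\mu(g_2g_1,v_x)$. The only difference is one of emphasis: where you isolate the right-invariance $R_h^*\tilde\nu=\tilde\nu$ as the crucial point to be argued, the paper absorbs it into the identification $\mathrm{s}^{-1}(x)=\mathrm{s}^{-1}(y)=\widetilde{L}_{\xi}$ furnished by that same lemma.
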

\begin{proof}
The lemma  follows from  Lemma \ref{lemma-facts-fiber-source-isometric}.  %from the following claim:
%\begin{enumerate}
%\item The leaves of $\widetilde{\mathcal{F}}$ have trivial holonomy and are isometric to each other. 
%\item The fiber of source map are isometric to (each)  leaf of $\widetilde{\mathcal{F}}.$
%\end{enumerate}
In fact, Lemma \ref{lemma-facts-fiber-source-isometric}  implies  directly that $\mathcal{V}$ is constant. 
It  also allows us to check  that  $Av(f)$ is a basic function,
analogously to how this fact is demonstrated in the classic case where the groupoid comes from the action of a group on $M.$ 
In other words,  set $x=\mathrm{s}(g_1)$ and   $y=\mathrm{t}(g_1)$  since the groupoid is transitive, i.e., 
$\mathrm{s}^{-1}(x)=\mathrm{s}^{-1}(y)=\widetilde{L}_{\xi}.$   
\begin{eqnarray*}
Av(f) \big(\mu(g_{1},v_x)\big) & = &\frac{1}{\mathcal{V} }\int_{\mathrm{s}^{-1}(y)} f\big(\mu(g_{2},\mu(g_{1},v_x))\big) \tilde{\nu}\\
& = & \frac{1}{\mathcal{V} } \int_{\widetilde{L}_{\xi}} f\big(\mu(g_{2}g_{1},v_x)\big) \tilde{\nu}\\
  & = &\frac{1}{\mathcal{V} } \int_{\mathrm{s}^{-1}(x)} f\big(\mu(g_{2}g_{1},v_x)\big) \tilde{\nu}
\end{eqnarray*}

%\footnote{Acho que esta certo, pensando ainda na forma de apresenta-lo}
\end{proof}

%Consider a open cover $\{U_{i}\}$ of $L$ such that $U_i$ is difeomorphic to a $\dim L$-rectangle in Euclidean space, 
%$\{\rho_i \}$ the partion of unit subordinate to $\{U_{i}\}.$ 
\label{sec:average}

\begin{definition} 
\label{MALEX-definition-Av-orbitlike}
A  linear functional  $l: C^{\infty}_{c}(E^{\delta}) \to \mathbb{R}$ is called \emph{symmetric} with 
with respect to the foliation $\mathcal{F}^{\ell}$ if it fullfils the following property: 
$l(f\circ \varphi^{\mathcal{F}})= l(f) $ for each $f\in C^{\infty}_{c}(E^{\delta})$ 
and for  each $\varphi^{\mathcal{F}}$ that is a composition of flows of  linearized vector fields %% $\mathrm{t}^{\ell}$
that preserve the Sasaki metric.
%-related to the right invariant  
%vector fields of $\mathcal{G}^{\ell}$.
\end{definition}

\begin{lemma}
\label{MALEX-lemma-Av-l-orbitlike} 
Let $l$ be a linear functional on $C^{\infty}_{c}\big(E^{\delta}\big)$  symmetric with respect to the foliation $\mathcal{F}^{\ell}.$ 
 Then for each  $f\in C^{\infty}_{c}\big(E^{\delta}\big)$
 $$ l\Big( \mathrm{Av}(f)  \Big)=l\Big( f \Big)$$

% $$ l\Big(\int_{(\mathrm{s})^{-1}(x)} f_{\alpha}\circ \mu(g,v_x) \, \omega_{x} \Big)= 
%l\Big(f_{\alpha}  \int_{(\mathrm{s})^{-1}(x)}  \, \omega_{x} \Big) $$ 

%$$l\Big(\int_{\mathrm{s}^{-1}(x)} f_{\alpha}\, \omega_{x} \Big) = 
%l\Big(f_{\alpha}\circ \mathrm{i}\int_{\mathrm{s}^{-1}(x)}\omega_x  \Big) $$ 

\end{lemma}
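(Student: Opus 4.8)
The plan is to mimic the classical argument that the symmetric average of a function has the same value under a symmetric functional as the function itself, using the groupoid description of $\mathrm{Av}$ from \eqref{MALEX-definition-AV-prova-malex} together with Remark \ref{MALEX-remark-groupoid-volume-preserve} and Lemma \ref{MALEX-flows-preserve-volume}. First I would unwind the definition: writing $\mathrm{Av}(f)(v_x) = \frac{1}{\mathcal V}\int_{\mathrm s^{-1}(x)} f\circ\mu(g,v_x)\,\tilde\nu_x$, the integration over the source fibre is, by Lemma \ref{lemma-facts-fiber-source-isometric}, integration over (a copy of) a leaf $\widetilde L$ of $\widetilde{\mathcal F}$, all of which are isometric; and by Remark \ref{MALEX-remark-groupoid-volume-preserve} the groupoid $\mathcal G^\ell$ and its density $\tilde\nu$ may be taken to be generated by (compositions of flows of) linearized vector fields that preserve the Sasaki volume. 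The idea is to realize $\mathrm{Av}(f)$ as a "continuous average" of functions of the form $f\circ\varphi^{\mathcal F}$, so that applying the linear functional $l$ and using its symmetry $l(f\circ\varphi^{\mathcal F})=l(f)$ pushes $l$ inside the integral and collapses it.

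The key steps, in order, would be: (1) reduce, via a partition of unity on $B$ subordinate to the trivializing charts $W$ used in Lemma \ref{MALEX-flows-preserve-volume}, to the case where the fibrewise average is generated by finitely many one-parameter groups $\varphi_t^{G_i}$ of type-2 fields and one family $\varphi_t^{F_j}$ of type-1 fields, all volume-preserving; concretely express the integral over $\mathrm s^{-1}(x)\cong\widetilde L$ as an iterated integral in the flow-time parameters against the (constant, by Lemma \ref{MALEX-lemma-Av-eh-basico}) density. (2) Approximate each such iterated integral by Riemann sums $\frac{1}{N}\sum_k f\circ\varphi_{t_k}^{(\cdot)}$; because the flows in question preserve the Sasaki metric, each summand $f\circ\varphi_{t_k}^{(\cdot)}$ lies in $C^\infty_c(E^\delta)$ and, by the hypothesis on $l$, satisfies $l(f\circ\varphi_{t_k}^{(\cdot)})=l(f)$. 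Hence $l$ applied to the Riemann sum equals $l(f)$ for every $N$. (3) Pass to the limit: show the Riemann sums converge to $\mathrm{Av}(f)$ in a topology on $C^\infty_c(E^\delta)$ on which $l$ is continuous (or, more safely, observe that $l$ is only assumed linear, so instead argue that $\mathrm{Av}(f)$ itself can be written \emph{exactly} — not merely as a limit — as a finite iterate using the fact that integration against a smooth compactly supported density over a compact leaf is, after Fubini, a genuine average; alternatively invoke that $l$ extends to the relevant completion as in the classical Palais argument). Then $l(\mathrm{Av}(f)) = l(f)$.

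A cleaner route that avoids the continuity issue altogether is to imitate the classical proof verbatim: define, for $\varphi^{\mathcal F}$ ranging over the volume-preserving linearized flows, the family $\{f\circ\varphi^{\mathcal F}\}$, note that $\mathrm{Av}(f)$ is by construction the barycenter of this family with respect to the normalized invariant measure $\tilde\nu/\mathcal V$ on the source fibre (this is precisely what \eqref{MALEX-definition-AV-prova-malex} says once one uses that the fibre is a single leaf $\widetilde L$ and $\mu$ intertwines the flows), and then apply $l$: by linearity and the commutation of $l$ with integration against a fixed finite measure (which holds because the integrand $g\mapsto l(f\circ\mu(g,\cdot))$ is the constant $l(f)$ by symmetry), one gets $l(\mathrm{Av}(f)) = \frac{1}{\mathcal V}\int_{\mathrm s^{-1}(x)} l(f\circ\mu(g,\cdot))\,\tilde\nu = \frac{1}{\mathcal V}\int_{\mathrm s^{-1}(x)} l(f)\,\tilde\nu = l(f)$. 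The point is that the map $g\mapsto \mu(g,\cdot)$ realizes each groupoid element as (a composition of time-one maps of) volume-preserving linearized flows, which is exactly the class over which $l$ is assumed symmetric.

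The main obstacle I anticipate is step (2)/(3): justifying that $l$ — a priori only linear, with no continuity assumed — commutes with the fibre integral in \eqref{MALEX-definition-AV-prova-malex}. The honest fix is the one used in the group-action case: identify the source fibre with a single leaf $\widetilde L$ on which the groupoid acts through volume-preserving diffeomorphisms, decompose $\mathrm{Av}$ into finitely many genuine (finite-dimensional) integrals over flow parameters via Lemma \ref{MALEX-flows-preserve-volume}, and then use that for a fixed smooth compactly supported measure the barycenter is a well-defined element of $C^\infty_c(E^\delta)$ on which the constancy of $g\mapsto l(f\circ\mu(g,\cdot))$ can be exploited directly — i.e., one never needs to move $l$ past a limit, only past a finite convex combination in the exact identity $\mathrm{Av}(f)=\int (f\circ\varphi^{\mathcal F}_{(\cdot)})\,d(\text{prob.\ measure})$, which holds pointwise and is preserved by any linear functional because the integrand-of-$l$-values is literally constant. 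Everything else (constancy of $\mathcal V$, basicness of $\mathrm{Av}(f)$, volume-preservation of the flows, isometry of the source fibres) has already been supplied by Lemmas \ref{MALEX-flows-preserve-volume}, \ref{MALEX-lemma-Av-eh-basico} and \ref{lemma-facts-fiber-source-isometric}.
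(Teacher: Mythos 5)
Your first route --- writing the fibre integral as a Riemann sum of terms of the form $f\circ \mathrm{t}^{\ell}\circ\varphi^{\ell}\circ\mathrm{i}^{\ell}$, applying $l$ term by term via the symmetry hypothesis, and then controlling the error --- is essentially the paper's proof of this lemma. The paper localizes with a partition of unity on the leaf, identifies each piece $\widetilde U_i$ of the source fibre with $S\times K$, writes the integral in coordinates as a Riemann sum whose remainder $R$ is estimated in the $W^{k,p}$ norm uniformly in $(x,v)$, realizes each sample point as a volume-preserving linearized flow via Remark \ref{MALEX-remark-groupoid-volume-preserve}, and then bounds $|l(R)|\leq \|l\|\,\|R\|_{W^{k,p}}$. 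In other words, the continuity issue you flag is resolved there by implicitly assuming that $l$ is bounded with respect to a Sobolev norm (the symbol $\|l\|$ appears explicitly in the estimates), which is harmless because in the intended application (Lemma \ref{MALEX-lemma-palais-J-symmetric-linear}) $l=dJ(b)$ is such a functional.

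Your ``cleaner route,'' however, contains a genuine gap. The interchange of $l$ with the integral over $\mathrm{s}^{-1}(x)$ is not justified for a functional that is merely linear: that integral is taken over a positive-dimensional compact leaf, so $\mathrm{Av}(f)$ is a limit of convex combinations of the functions $f\circ\varphi^{\mathcal F}$, not a finite convex combination of them. The constancy of the scalar map $g\mapsto l(f\circ\mu(g,\cdot))$ tells you the value of $l$ on each finite Riemann sum, but says nothing about its value on the vector-valued barycenter $\mathrm{Av}(f)$ unless $l$ passes through the limit --- which is exactly the continuity you are trying to avoid. A discontinuous linear functional (which exists on any infinite-dimensional normed space) can satisfy $l(f\circ\varphi)=l(f)$ for every $\varphi$ in the given class and still take an unrelated value on $\mathrm{Av}(f)$; so the statement ``the integrand-of-$l$-values is literally constant, hence $l$ commutes with the integral'' is a non sequitur. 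The fix is not to avoid continuity but to use it: keep your Riemann-sum route, prove the remainder is small in $W^{k,p}$ (as the paper does, by uniform continuity of the integrand and its derivatives in the fibre variable), and invoke boundedness of $l$ in that norm.
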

\begin{proof}

Consider a open cover $\{U_{\alpha}\}$ of $L$ such that $U_{\alpha}$ is difeomorphic to a $\dim L$-rectangle in Euclidean space, 
$\{\rho_{\alpha} \}$ the partion of unit subordinate to $\{U_{\alpha}\}.$ Set $f_{\alpha}(v_x)=f(v_x)\rho_{\alpha}(x)$. 
In order to prove the lemma it suffices to prove
\begin{equation}
\label{eq-0lemma-simetria-l}
l\Big(\int_{(\mathrm{s})^{-1}(x)} f_{\alpha}\circ \mu(g,v_x) \, \tilde{\nu}_{x} \Big)=
l\Big( f_{\alpha}\int_{(\mathrm{s})^{-1}(x)}  \, \tilde{\nu}_{x} \Big)
\end{equation}
% for $f_{\alpha}(v_x)=f(v_x)\rho_{\alpha}\circ\pi(v_x)$. 
Let  $\{\widetilde{U}_{i}\}$ be the connected component of  
$\{(\mathrm{t}|_{\mathrm{s}^{-1}(x)})^{-1}(U_{\alpha})\}.$ Note that $\widetilde{U}_{i}$ is a neighborhood of $\mathrm{s}^{-1}(x)$ that can be isometric identified
to a neighborhood of the leaf $\widetilde{L}_{\xi}$ of the foliation $\widetilde{\mathcal{F}}$ (used in the construction of the groupoid $\mathcal{G}_{1}$), i.e., 
$\widetilde{U}_{i}$ does not depend on $x$ and hence, from now, 
the neighboorhood $\widetilde{U}_i$ and its isometric neighborhood on  $\widetilde{L}_{\xi}$ are going to be denoted by the same notation.

\begin{equation}
\label{eq-1lemma-simetria-l}
\int_{(\mathrm{s})^{-1}(x)} f_{\alpha}\circ \mu(g,v_x) \, \tilde{\nu}_{x}  = \sum_{i} \int_{\widetilde{U}_{i}} f_{\alpha}\circ \mu(g,v_x) \, \tilde{\nu}_{i} 
\end{equation}
where $\tilde{\nu}_{i}=\rho_{i}\circ\mathrm{t}\,\tilde{\nu}_{x}.$
Also note that each neighboorhood $\widetilde{U}_i$ is diffeomorphic to $S\times G_{x_i}$ where $G_{x_i}$ is the connected component of the 
isotropic group of a point $x_{i}\in U_{i}$
and $S$ is Euclidean rectangle on the Euclidian space with dimension of $L$. Since $G_{x_{i}}$ is diffeomorphic to a connected compact group $K$ 
we have that $\widetilde{U}_{i}$ is diffeomorphic to $S\times K$. 
Let $\psi_{i}:S\times K\to \widetilde{U}_{i}$ be a parametrization, $f_{\alpha i}(\cdot,\cdot)=f_{\alpha}\circ \mu(\psi(\cdot), \cdot)$
and $\tilde{\nu}_{i}\omega_{0}=\psi_{i}^{*}\tilde{\nu}_{i}$. Then 
 writting in coordenates we have:  
\begin{equation}
\label{eq-2lemma-simetria-l}
\int_{\widetilde{U}_{i}} f_{\alpha}\circ \mu(g,v_x) \, \tilde{\nu}_{i}=\int_{S\times K} f_{\alpha i}(s,k,x,v) \sigma_{i}(s,k) \tilde{\nu}_0  
\end{equation}
Given an $\epsilon$ we claim that there exists a  partition $\{P_{i j}\}$ of $\tilde{U}_{i}$ so that
\begin{equation}
\label{eq-3lemma-simetria-l}
\int_{S\times K} f_{\alpha i}(s,k,x,v) \sigma_{i}(s,k) \tilde{\nu}_{0}=\sum_{j} f_{\alpha i}(s_{i j},k_{ ij},x,v) V_{i j} +R_{i}(x,v) 
\end{equation}
where $\|R_{i}\|_{W^{k,p}}<\epsilon$. Here  $V_{i j}= \sigma_{i}(s_{i j},k_{i j})\int_{\psi^{-1}(P_{i j})}\tilde{\nu}_{0}$ and $(s_{i j},k_{i j})\in \psi^{-1}(P_{i j}).$ In fact, define $h:\Big(S\times K \Big)\times E_{i}\to\mathbb{R}$  as 
$h\big((s,k), e\big):=f_{\alpha i}\big((s,k),e\big)\cdot\sigma_{i}(s,k)$ where $e=(x,v)\in E_i=\pi^{-1}(U_{i})\cap E^{\delta/2}$
Since $h$ is uniformly continous on $\Big(S\times K \Big)\times E_{i}$, we can check (by the definition of Riemann integral)
that $|R(x,v)|<\epsilon$ independent of $e=(x,v)\in E_{i}.$ 
Now if we set $\hat{h}=\frac{\partial^{\beta} }{\partial e_{\beta} }f_{\alpha i }\big((s,k),e\big)\cdot\sigma_{i}(s,k)$
and replace $\hat{h}$ in eq. \eqref{eq-3lemma-simetria-l} we infer, by similiar argument that $|\hat{R}(x,v)|<\epsilon_1$ 
once we consider a refinement of $\{P_{i j} \}.$ This fact and the partial derivation 
$\frac{\partial^{\beta} }{\partial e_{\beta} }$ of eq. \eqref{eq-3lemma-simetria-l} (using our original $h$)  imply that
$ |\frac{\partial^{\beta} }{\partial e_{\beta} } R(x,v)|<\epsilon.$ These facts 
allow us to conclude  that   $\|R_{i}\|_{W^{k,p}}<\epsilon$ for a refinement of $\{P_{i j} \}$.

Note that since the fibers of the source map are connected, each point $g\in \mathrm{s}^{-1}(x)$
is $g=\varphi^{\ell}(\mathrm{i}(x))$ for some $\varphi^{\ell},$ that is induced  
by composition of  flows of linearized vector field $\varphi^{\mathcal{F}}$ 
that preserve volume of the Sasaki metric (see Remark   \ref{MALEX-remark-groupoid-volume-preserve}). 
Hence we can rewrite $f_{\alpha}$ intrinsically as follows: 
\begin{equation}
\label{eq-4lemma-simetria-l}
f_{\alpha i}(s_{i j},k_{i j},x,v)=f_{\alpha}\circ\mu(\varphi_{i j}^{\ell}\mathrm{i}(x),v_{x})=
f_{\alpha}\circ\mathrm{t}^{\ell}\circ \varphi_{i j}^{\ell}\circ\mathrm{i}^{\ell}(v_{x})
\end{equation}

Given $\epsilon$, it follows from
eq. \eqref{eq-1lemma-simetria-l},\eqref{eq-2lemma-simetria-l},
\eqref{eq-3lemma-simetria-l},\eqref{eq-4lemma-simetria-l} that 
there is a  partition $\{P_{i j}\}$ of $\widetilde{U}_{i}$ so that:
\begin{equation}
\label{eq-5lemma-simetria-l}
\int_{\mathrm{s}^{-1}(x)} f_{\alpha}\circ \mu(g,v_x) \, \tilde{\nu}_{x}=
\sum_{i j} f_{\alpha}\circ\mathrm{t}^{\ell}\circ \varphi_{i j}^{\ell}\circ\mathrm{i}^{\ell}(v_{x}) V_{i j} +R(v_x) 
\end{equation}
where $\| R \|_{W^{k p}}<\frac{\epsilon}{2 \|l \|}$. Note that $l$ to be symmetric  
is equivalent to 
\begin{equation}
\label{eq-6lemma-simetria-l}
l(f_{\alpha}\circ \mathrm{t}^{\ell}\circ \varphi^{\ell}\circ \mathrm{i})=l(f_{\alpha}\circ \mathrm{t}^{\ell}\circ\mathrm{i}^{\ell}) 
\end{equation}
Applying $l$ in the eq. \eqref{eq-5lemma-simetria-l}, and using \eqref{eq-6lemma-simetria-l}
we conclude:
\begin{equation}
\label{eq-7lemma-simetria-l}
l\Big(\int_{\mathrm{s}^{-1}(x)} f_{\alpha}\circ \mu(g,v_x) \, \tilde{\nu}_{x}\Big)=
\sum_{i j} l(f_{\alpha}\circ \mathrm{t}^{\ell}\circ\mathrm{i}^{\ell}) V_{i j} +l(R) 
\end{equation}

%we can find a $\delta$ so that 
%$d\Big(\big((s,k),e\big),\big((\tilde{s},\tilde{k}),e\big)   \Big)<\delta$ so that 
%$|h_{e}(s,k)-h_{e}(\tilde{s},\tilde{k})|<\frac{\epsilon_1}{\int_{S\times K}}$

By the same argument as in eq.\eqref{eq-3lemma-simetria-l}, and taking
in consideration that 
$f_{\alpha}(v_x)=f_{\alpha}\circ \mathrm{t}^{\ell}\circ\mathrm{i}^{\ell}(v_x)$
we have:
\begin{equation}
\label{eq-8lemma-simetria-l}
f_{\alpha}(v_x) \int_{\mathrm{s}^{-1}(x)}  \tilde{\nu}_{x}=
\sum_{i j} f_{\alpha}\circ \mathrm{t}^{\ell}\circ\mathrm{i}^{\ell}(v_x) V_{i j} +\widehat{R}(v_x) 
\end{equation}
where $\| \widehat{R} \|_{W^{k p}}<\frac{\epsilon}{2 \|l \|}$.
Applying $l$ on the previous equation we have:
\begin{equation}
\label{eq-9lemma-simetria-l}
l\Big(f_{\alpha} \int_{\mathrm{s}^{-1}(x)}  \tilde{\nu}_{x}\Big)=
\sum_{i j} l(f_{\alpha}\circ \mathrm{t}^{\ell}\circ\mathrm{i}^{\ell}) V_{i j} +l(\widehat{R}) 
\end{equation}

Arbitrary of the choise of $\epsilon$ and  eq. 
\eqref{eq-7lemma-simetria-l} and 
\eqref{eq-9lemma-simetria-l} conclude the proof of eq. 
\eqref{eq-0lemma-simetria-l} and hence the 
proof of the lemma. 

\end{proof}

%%%%%%%%%%%%%%%%%%%%%%55

\subsection{Linear version of Principle of Symmetric Criticality of Palais}
%%%%%%%%
\begin{lemma}
\label{MALEX-lemma-palais-l-orbit-like-version}
Let $\mathcal{G}^{\ell}\rightrightarrows E$ be the linear Lie groupoid whose orbits
are the leaves of the orbit-like foliation $\mathcal{F}^{\ell}$ on $E$. 
Let $l$ be a linear functional on $C^{\infty}_{c}(E^{\delta})$. Assume that
\begin{enumerate}
\item[(a)] $l(b)=0$, for all $\mathcal{F}^{\ell}$-basic function $b\in C^{\infty}_{c}(E^{\delta})$
\item[(b)]  $l$ is symmetric with respect to $\mathcal{F}^{\ell}$ i.e., fulfills Definition \ref{MALEX-definition-Av-orbitlike}
  %$l(f\circ \varphi^{\mathcal{F}}))= l(f) $ for each $f\in C^{\infty}(\mathrm{Tub}_{\delta/2}(L)$ 
%and for  each $\varphi^{\mathcal{F}}$ that is a composition do the flows of vector fields target related to the right invariant  
%vector fields of $\mathcal{G}^{\ell}$. 
\end{enumerate}
Then $l=0.$
\end{lemma}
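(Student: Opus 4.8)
The plan is to deduce Lemma~\ref{MALEX-lemma-palais-l-orbit-like-version} directly from Lemma~\ref{MALEX-lemma-Av-l-orbitlike} together with hypothesis (a), using that $\mathrm{Av}$ genuinely lands in basic functions (Lemma~\ref{MALEX-lemma-Av-eh-basico}). Concretely, let $f\in C^{\infty}_{c}(E^{\delta})$ be arbitrary. Since $l$ is symmetric with respect to $\mathcal{F}^{\ell}$ (hypothesis (b)), Lemma~\ref{MALEX-lemma-Av-l-orbitlike} gives
\[
l(f) = l\bigl(\mathrm{Av}(f)\bigr).
\]
By Lemma~\ref{MALEX-lemma-Av-eh-basico}, the function $\mathrm{Av}(f)$ is $\mathcal{F}^{\ell}$-basic and still compactly supported in $E^{\delta}$, so hypothesis (a) applies to it and yields $l(\mathrm{Av}(f)) = 0$. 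Combining the two displays gives $l(f)=0$. As $f$ was arbitrary, $l\equiv 0$, which is the claim.

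The only point that requires a line of care is the compatibility of the notion of ``symmetric'' used in the two statements. In Definition~\ref{MALEX-definition-Av-orbitlike} symmetry is phrased via invariance under compositions $\varphi^{\mathcal{F}}$ of flows of linearized vector fields preserving the Sasaki metric, and this is exactly the hypothesis invoked in the proof of Lemma~\ref{MALEX-lemma-Av-l-orbitlike} (see equation \eqref{eq-6lemma-simetria-l}); so hypothesis (b) here is literally the running hypothesis there, and nothing further is needed. One should also note that $\mathrm{Av}(f)$ has support inside $E^{\delta}$: indeed $\mathrm{Av}(f)(v_x)$ depends only on the values of $f$ along the source fiber through $v_x$, i.e.\ along the leaf of $\mathcal{F}^{\ell}$ through $v_x$, and these leaves stay within a fixed tube, so $\mathrm{supp}\,\mathrm{Av}(f)$ is the $\mathcal{F}^{\ell}$-saturation of $\mathrm{supp}\,f$, which remains compactly contained in $E^{\delta}$ after possibly shrinking $\delta$ as in the setup of the averaging operator. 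Thus $\mathrm{Av}(f)$ is a legitimate input for hypothesis (a).

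There is essentially no obstacle: the content has been front-loaded into Lemma~\ref{MALEX-lemma-Av-l-orbitlike} (the delicate Riemann-sum approximation in $W^{k,p}$-norm) and Lemma~\ref{MALEX-lemma-Av-eh-basico} (basicness of the average), so the present lemma is a two-line formal consequence, exactly parallel to how Palais' Principle of Symmetric Criticality is derived in the classical group-action setting from the averaging projection. If anything merited attention it would be the support bookkeeping mentioned above, but that is routine. I would therefore present the proof simply as: apply Lemma~\ref{MALEX-lemma-Av-l-orbitlike}, then Lemma~\ref{MALEX-lemma-Av-eh-basico} and hypothesis (a), and conclude.
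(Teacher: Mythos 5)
Your proposal is correct and is essentially identical to the paper's own proof: both combine Lemma~\ref{MALEX-lemma-Av-l-orbitlike} (giving $l(f)=l(\mathrm{Av}(f))$ from symmetry) with Lemma~\ref{MALEX-lemma-Av-eh-basico} and hypothesis (a) (giving $l(\mathrm{Av}(f))=0$), then conclude by arbitrariness of $f$. The extra remark on support bookkeeping is a reasonable, if optional, addition.
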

\begin{proof}
Consider $f\in C^{\infty}_{c}(E^{\delta}).$ 
\begin{eqnarray*}
0 & \stackrel{(*)}{=} & l\big(\mathrm{Av}(f)\big)\\
  & \stackrel{(**)}{=} & l\big(f \big)
\end{eqnarray*}
where (*) follows from item (a) and Lemma \ref{MALEX-lemma-Av-eh-basico} and (**) follows from 
Lemma \ref{MALEX-lemma-Av-l-orbitlike}. The arbitrariness of choice of  $f$ implies that $l=0$.
\end{proof}
Let us illustrate the principle described above.
Let  $J:C^{\infty}_{c}\big(E^{\delta}\big)\to\mathbb{R}$ be the functional:     
$$ J(f)= c_1\int_{E^{\delta}}  \langle \nabla f, \nabla f \rangle \nu + \frac{1}{2}\int_{E^{\delta}} k  f^{2} \nu+ c_2\int_{E^{\delta}}   f^{c_3} \nu $$
where $\langle \cdot, \cdot\rangle=\mathrm{g}^{^{0}}$ is the Sasaki metric, $\nu$ is the volume induced by the Sasaki metric,
 $k:E^{\delta}\to \mathbb{R}$ is a $\mathcal{F}^{\ell}$-smooth function and $c_i$ and $c_3>0$ are constant.    
\begin{lemma}
\label{MALEX-lemma-palais-J-symmetric-linear}
Let $f\in C^{\infty}_{c}\big(E^{\delta}\big)$ be a smooth function and $b\in C^{\infty}\big(E\big)$ a smooth  $\mathcal{F}^{\ell}$ basic function. 
Set 
$$l(f)=dJ (b)(f)=Q(b,f)= 2c_{1}\int_{E^{\delta}}  \langle \nabla b, \nabla f \rangle \nu + \int_{E^{\delta}} k b f \nu + c_{2} c_3\int_{E^{\delta}}   b^{c_3-1}f \nu  ,$$   
Then $l:C^{\infty}_{c}\big(E^{\delta}\big)\to\mathbb{R} $ is symmetric with respect to volume preserves linearized vector fields, 
i.e., fulfills Definition \ref{MALEX-definition-Av-orbitlike}
\end{lemma}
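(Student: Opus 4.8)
The plan is to verify Definition \ref{MALEX-definition-Av-orbitlike} by hand, treating the three terms of $Q(b,f)$ separately. Every $\varphi^{\mathcal F}$ occurring there is a composition of flows $\psi=\varphi^{\vec X}_{s}$ of (volume-preserving) linearized vector fields $\vec X$; since such $\vec X$ are tangent to $\mathcal F^{\ell}$, their flows fix every leaf of $\mathcal F^{\ell}$ and preserve the distance to $B$ (hence $E^{\delta}$), so it suffices to prove $l(f\circ\psi)=l(f)$ for one such $\psi$ and then iterate.

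First I would dispatch the two zeroth-order terms. Writing $h$ for either $kb$ or $c_{2}c_{3}\,b^{c_{3}-1}$, the function $h$ is $\mathcal F^{\ell}$-basic because $k$ and $b$ are, so $h\circ\psi=h$; hence $(f\circ\psi)\,h=(f\circ\psi)(h\circ\psi)=(fh)\circ\psi$, and since $\psi$ preserves the Sasaki volume $\nu$, $\int_{E^{\delta}}(f\circ\psi)\,h\;\nu=\int_{E^{\delta}}(fh)\circ\psi\;\nu=\int_{E^{\delta}}fh\;\nu$. So the second and third integrals defining $l$ are unchanged under $f\mapsto f\circ\psi$.

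For the Dirichlet term the plan is to integrate by parts: since $f$, hence $f\circ\psi$, is compactly supported in $E^{\delta}$ and $b$ is smooth, $\int_{E^{\delta}}\langle\nabla b,\nabla(f\circ\psi)\rangle\,\nu=-\int_{E^{\delta}}(\Delta_{\mathrm{g}^{0}}b)\,(f\circ\psi)\,\nu$, with $\Delta_{\mathrm{g}^{0}}$ the Laplacian of the Sasaki metric. The crucial point, which I expect to be the main obstacle, is that $\Delta_{\mathrm{g}^{0}}b$ is again $\mathcal F^{\ell}$-basic whenever $b$ is. I would argue this from the fact that $\mathrm{g}^{0}$ is bundle-like for the Riemannian foliation $\mathcal F^{\ell}$ and, by its construction together with Theorem \ref{MALEX-theorem-modelo-semi-local} (which realizes $\mathcal F^{\ell}$ as the orbit foliation of the linear groupoid $\mathcal G^{\ell}$ acting by fibrewise isometries), the mean-curvature form $\kappa$ of $\mathcal F^{\ell}$ is basic, so that in the splitting $\Delta_{\mathrm{g}^{0}}b=\Delta_{B}b+\kappa(\nabla b)$ (with $\Delta_{B}$ the basic Laplacian) both summands are basic; equivalently, $\Delta_{\mathrm{g}^{0}}$ commutes with the average operator, whence $\Delta_{\mathrm{g}^{0}}b=\Delta_{\mathrm{g}^{0}}(\mathrm{Av}\,b)=\mathrm{Av}(\Delta_{\mathrm{g}^{0}}b)$ is basic by Lemma \ref{MALEX-lemma-Av-eh-basico}. (When $\vec X$ is actually Killing none of this is needed: $\psi$ is then an isometry commuting with $\Delta_{\mathrm{g}^{0}}$ and fixing each leaf, so $(\Delta_{\mathrm{g}^{0}}b)\circ\psi=\Delta_{\mathrm{g}^{0}}(b\circ\psi)=\Delta_{\mathrm{g}^{0}}b$ directly.) Granting basicness of $\Delta_{\mathrm{g}^{0}}b$, the computation of the previous paragraph applies with $h=\Delta_{\mathrm{g}^{0}}b$, giving $\int_{E^{\delta}}(\Delta_{\mathrm{g}^{0}}b)(f\circ\psi)\,\nu=\int_{E^{\delta}}(\Delta_{\mathrm{g}^{0}}b)\,f\,\nu$, and integrating by parts back recovers $\int_{E^{\delta}}\langle\nabla b,\nabla(f\circ\psi)\rangle\,\nu=\int_{E^{\delta}}\langle\nabla b,\nabla f\rangle\,\nu$.

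Adding the three contributions yields $l(f\circ\psi)=Q(b,f\circ\psi)=Q(b,f)=l(f)$, and composing over the flows that make up $\varphi^{\mathcal F}$ gives $l(f\circ\varphi^{\mathcal F})=l(f)$ for all $f\in C^{\infty}_{c}(E^{\delta})$, i.e. $l$ fulfils Definition \ref{MALEX-definition-Av-orbitlike}. Apart from the basicness of $\Delta_{\mathrm{g}^{0}}b$, the argument is just the change-of-variables formula for a $\nu$-preserving diffeomorphism that fixes every leaf of $\mathcal F^{\ell}$, applied in turn to $kbf$, to $b^{c_{3}-1}f$, and to $(\Delta_{\mathrm{g}^{0}}b)f$.
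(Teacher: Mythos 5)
Your treatment of the two zeroth-order terms coincides with the paper's step $(I)$, but for the Dirichlet term you take a genuinely different route. The paper stays at first order: since $b$ is basic and the horizontal distribution $\mathcal{T}$ is tangent to $\mathcal{F}^{\ell}$, the gradient $\nabla b$ is vertical, so $\langle\nabla b,\nabla f\rangle=\langle\nabla^{E}b,\nabla^{E}f\rangle$ is a purely fiberwise quantity; one then uses that $\varphi_{t}$ restricts to isometries between the fibers of $E$ (Lemma \ref{MALEX-lemma-fluxo-isometiras}) to get $\langle\nabla^{E}(b\circ\varphi_t),\nabla^{E}(f\circ\varphi_t)\rangle=\langle\nabla^{E}b,\nabla^{E}f\rangle\circ\varphi_t$ and concludes by the same change of variables. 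You instead integrate by parts and reduce everything to the single claim that $\Delta_{\mathrm{g}^{0}}b$ is basic. That claim is in fact true, but it is second-order information and is the weak point of your write-up: the flows $\psi$ are \emph{not} isometries of the total Sasaki metric (only fiberwise isometries preserving $\nu$), so there is no reason for $\Delta_{\mathrm{g}^{0}}$ to commute with $\mathrm{Av}$, and the appeal to a basic mean-curvature form for the singular foliation $\mathcal{F}^{\ell}$ is asserted rather than proved. The honest way to close your gap is essentially to rediscover the paper's observation: because the fibers of $(E,\mathrm{g}^{0})\to B$ are totally geodesic, $X b=0$ for horizontal $X$, and $A_{X}X=0$, one gets $\Delta_{\mathrm{g}^{0}}b=\Delta^{E_{p}}(b|_{E_{p}})$ fiberwise; this is then basic because $b|_{E_{p}}$ is invariant under $K^{0}_{p}$ and under the holonomy identifications, both of which act by fiber isometries. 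So your argument can be completed, but only by importing the same fiberwise-isometry input the paper uses directly on gradients, at which point the integration by parts buys nothing; the paper's pointwise identity is the more economical (and more elementary) of the two.
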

\begin{proof}
\begin{eqnarray*}
Q(b\circ\varphi_t,f\circ\varphi_t) & = & 2 c_{1}\int_{E^{\delta}}  \langle \nabla \big(b\circ\varphi_{t}\big), \nabla \big(f\circ\varphi_{t}\big) \rangle \nu + 
\int_{E^{\delta}} k \big(b\circ\varphi_{t}\big) \big(f\circ\varphi_{t}\big) \nu \\
& +&   c_{2} c_{3}  \int_{E^{\delta}}  \big(b\circ\varphi_{t}\big)^{c_{3}-1} \big(f\circ\varphi_{t}\big) \nu\\
& \stackrel{(I)}{=} & 2 c_{1} \int_{E^{\delta}}  \langle \nabla \big(b\circ\varphi_{t}\big), \nabla \big(f\circ\varphi_{t}\big) \rangle \nu + 
\int_{E^{\delta}} k  b  f +   c_{2} c_{3}   b^{c_3-1}f  \nu\\
& \stackrel{(II)}{=} &  2 c_{1} \int_{E^{\delta}}  \langle \nabla^{E} \big(b\circ\varphi_{t}\big), \nabla^{E} \big(f\circ\varphi_{t}\big) \rangle \nu + 
\int_{E^{\delta}} k  b  f +   c_{2} c_{3}   b^{c_3-1}f  \nu\\
%& = & \int_{E^{\delta}}  \langle \nabla^{E} \big(b\circ\varphi_{t}\big), \nabla^{E} \big(f\circ\varphi_{t}\big) \rangle \nu + 
%\int_{E^{\delta}} k  b  f \nu\\
%& = & \int_{E^{\delta}}  \langle \nabla^{E}  b, \nabla^{E} f \rangle_{\varphi_{t}} \nu + 
%\int_{E^{\delta}} k  b  f \nu\\
&  \stackrel{(III)}{=} & 2 c_{1} \int_{E^{\delta}}  \langle \nabla^{E}  b, \nabla^{E} f \rangle_{\varphi_{t}} \nu + 
\int_{E^{\delta}} k  b  f +   c_{2} c_{3}   b^{c_3-1}f  \nu \nu\\
%& = & \int_{E^{\delta}}  \langle \nabla  b, \nabla^{E} f \rangle_{\varphi_{t}} \nu + 
%\int_{E^{\delta}} k  b  f \nu\\
&  \stackrel{(II)}{=} & 2 c_{1} \int_{E^{\delta}}  \langle \nabla  b, \nabla f \rangle_{\varphi_{t}} \nu + 
\int_{E^{\delta}} k  b  f +   c_{2} c_{3}   b^{c_3-1}f  \nu\\
%& = & \int_{E^{\delta}}  \langle \nabla^{E}  b, \nabla^{E} f \rangle \nu + 
%\int_{E^{\delta}} k  b  f \nu\\
&  \stackrel{(I)}{=} & 2 c_{1} \int_{E^{\delta}}  \langle \nabla  b, \nabla f \rangle \nu + 
\int_{E^{\delta}} k  b  f +   c_{2} c_{3}   b^{c_3-1}f  \nu\\
& = &Q(b,f), 
\end{eqnarray*}
 where $(I)$ follows from the fact that $\varphi_{t}$ preserves $\nu$, $(II)$ from the fact that $\nabla^{E}b=\nabla b$ (where $\nabla^{E}$ is the induced
Riemannian connection on the fibers) and   $(III)$ from the fact that $\varphi_{t}$ induces isometries between the fibres of $E$, 
see Lemma \ref{MALEX-lemma-fluxo-isometiras}. 
\end{proof}
%%%%%%%%%%%%%5
%%%%%%%%%%%%%%%%%%%%%%%%%%%%%
\subsection{A few words about the avarage of a SRF  $\mathcal{F}$  on $E$ }
\label{MALEX-subsection-words-about-general-avarage}
We end this section briefly presenting an avarage operator $\mathrm{Av}_{\mathcal{F}}$ 
for a general SRF $\mathcal{F}$ on $(E,\langle\cdot, \cdot \rangle)$  where $\langle\cdot, \cdot \rangle$ is a  Sasaki metric.
%and extending it to the manifold $M$ using  partition of unity.
Although we can not (at least until now)
use this operator to have the Palais principle, 
that will be  a fundamental ingredient to construct  the constant 
scalar curvature, the operator  $\mathrm{Av}_{\mathcal{F}}$  will be used  to define  $\mathcal{F}$-Sobolev spaces. 
 
%hopping that this could be useful for future applications.  
Since in this section, we are dealing with the two foliations $\mathcal{F}^{\ell}\subset\mathcal{F},$ we need two different notations for basic 
functions with respect to these  foliations. Let $C^{\infty}_{c}(E^{\delta})_{b}$ denote (as usual)  the space of basic functions 
with respect to $\mathcal{F}^{\ell}$
and $ C^{\infty}_{c}(E^{\delta})^{\mathcal{F}}$  denote  basic functions with respect to  $\mathcal{F}.$ 
As we saw in previous sections $\mathrm{Av}:C^{\infty}_{c}(E^{\delta})\to C^{\infty}_{c}(E^{\delta})_{b}.$
Note that for a fixed $x_0\in B$  the foliation $\mathcal{F}^{\ell}$ intersects the fiber $E_{x_{0}}.$ This allow us  to project
functions of  $C^{\infty}_{c}(E^{\delta})_{b}$ into functions of $C^{\infty}_{c}(E^{\delta}_{x_0})_{b}$ 
 i.e.,  into the space of basic functions of $(\mathcal{F}_{x_{0}}^{\ell}, E_{x_{0}})$ with compact support on the fiber $E^{\delta}_{x_0}.$
 Hence we can define the restriction operator 
$\mathcal{R}^{\ell}_{x_0}:C^{\infty}_{c}(E^{\delta})_{b}\to C^{\infty}_{c}(E^{\delta}_{x_0})_{b}.$
Since $E_{x_{0}}$ is a vector space, we  have the Mendes-Radeschi avarage on this space \cite[Lemma 21]{mendes2018slice}, i.e, 
$\mathrm{Av}_{\mathcal{F}_{x_0}}:  C^{\infty}_{c}(E^{\delta}_{x_0})_{b} \to C^{\infty}_{c}(E^{\delta}_{x_0})^{\mathcal{F}}.$ Note that
two different leaves  of $\mathcal{F}_{x_{0}}$ (let us call them plaques)
 may belong to the same leaf $L\in\mathcal{F}$, but in this case they are isometric to each other, because the linearized holonomy 
sends one plaque to other isometrically. This observation allow us to extend $\mathcal{F}_{x_{0}}$-basic function on $E_{x_{0}}$ to
$\mathcal{F}$-basic function  on $E$, i.e.,to define the   operator
$\mathcal{E}:  C^{\infty}_{c}(E^{\delta}_{x_0})^{\mathcal{F}}\to  C^{\infty}_{c}(E^{\delta})^{\mathcal{F}}.$ Finally we  define
the desired operator: $ \mathrm{Av}_{\mathcal{F}}:C^{\infty}_{c}(E^{\delta})\to C^{\infty}_{c}(E^{\delta})^{\mathcal{F}}$ as 
$ \mathrm{Av}_{\mathcal{F}}=  \mathcal{E}\circ \mathrm{Av}_{\mathcal{F}_{x_0}}\circ\mathcal{R}^{\ell}_{x_0}\circ\mathrm{Av}.$

%Now we consider the foliation $\mathcal{F}|_{\mathrm{Tub}_{\delta}(L)}$ (identified via the inverse normal exponential map)
%with a SRF foliation on $E^{\delta}$, and from now on, we denote both by the same notation. 

%Note that this foliation and $\mathcal{F}^{\ell}$  
%  can be homothetic extended to all  $E$. Also recall that $\mathcal{F}^{\ell}\subset\mathcal{F}.$   
%%%%%We intend to define an avarage map with respect to 	$\mathcal{F}.$ 
%%%%

¨%\begin{remark}
%\label{MALEX-remark-general-av-M}
%One can extend the above operator to the manifold $M$ using  partition of unity. More precisely, given a covering  
%of tubular neighborhoods  $\{\mathrm{tub}_{r_{\alpha}}(L_{q_{\alpha}})\},$
%we have a  partition of unity $\{\rho_{i}\}$ subordinate to it; see Lemma\ref{MALEX-lemma-particao-unidade-F-invariante} for notations.
%We  define $\mathrm{Av}_{i}:C^{\infty}(U_{i})\to C^{\infty}(U_{i})_{b,\mathcal{F}}$ 
%as the operator  $\mathrm{Av}_{\mathcal{F}}$ defined on $E^{\delta_{i}}$once we have identified,
%via normal exponential map,  the tubular neighborhood  $U_{i}=\mathrm{tub}_{\delta_{i}}(L_{p_{i}})$ 
% with normal space $E^{\delta_{i}}=\nu^{\delta_{i}}(L_{p_{i}})$ of $L_{p_{i}}$.
% We finally define $ \mathrm{Av}_{\mathcal{F}}:C^{\infty}(M)\to C^{\infty}(M)_{b,\mathcal{F}}$ as: 
%\begin{equation}
%\label{MALEX-eq-Av-SRF}
%\mathrm{Av}_{\mathcal{F}}(f):= \sum_{i} \mathrm{Av}_{i}(\rho_{i} f).
%\end{equation}
%%%%where $\mathrm{Av}_{i}$ is  the operator $\mathrm{Av}_{\mathcal{F}}$ defined on the 
%%%%refined tubular neighborhood $\mathrm{tub}_{\delta_{i}}(L_{p_{i}})$ (identifying the operator on $E^{\delta}$ via normal exponential map
%%%%%with ) 
%Following  Lemma 21, p13 ?? it is possible to check that $\mathrm{Av}_{i}:C^{\infty}(U_{i})\to C^{\infty}(U_{i})_{b,\mathcal{F}}$ is a continuous 
%operator with respect to the Sobolev metrics. 
%\end{remark}

%%%%%%%%%%%%%%%%%%%%%%%%%%

\section{Principle of Symmetric Criticality of Palais on $M$}
\label{MALEX-section-principle-symmetry-palais-M}

We start by presenting a metric constructed with Sasaki metrics on tubular neighborhoods and partition of unity, 
that is analogous to the proof of \cite[Proposition 1.29]{morita2001geometry}. 

\begin{lemma}
\label{MALEX-lemma-particao-unidade-F-invariante}
%%%%%%\footnote{mesma prova que Morita-page 72 prop 1.29}
Consider an open cover of $\mathcal{F}$-tubular neighborhoods $\{\mathrm{tub}_{r_{\alpha}}(L_{q_{\alpha}})\}$ of $M$. Then
\begin{enumerate}
\item[(a)] there exists a locally finite 
open cover of tubular neighborhoods $\mathrm{tub}_{\delta_{i}}(L_{p_{i}})$ that is a refinement of  $\{\mathrm{tub}_{r_{\alpha}}(L_{q_{\alpha}})\}.$ 
In addition $\mathrm{tub}_{\delta_{i}/3}(L_{p_{i}})$ is still an open covering of $M$. 
\item[(b)] There exists a $\mathcal{F}$-partition of unity $\{\rho_{i}\}$ 
with support on $\mathrm{tub}_{\delta_{i}}(L_{p_{i}})$ so that $\rho_{i}=1$ when restrict to $\mathrm{tub}_{\delta_{i}/3}(L_{p_{i}})$. 
The partition of unity $\{\rho_{i}\}$  is said to be subordinate to $\{\mathrm{tub}_{r_{\alpha}}(L_{q_{\alpha}})\}.$ 
\end{enumerate}
\end{lemma}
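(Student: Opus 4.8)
The plan is to imitate the classical construction of a subordinate partition of unity (as in \cite[Proposition 1.29]{morita2001geometry}), but to work equivariantly with respect to $\mathcal{F}$ by using $\mathcal{F}$-saturated tubular neighborhoods as the charts and by building the bump functions out of functions of the distance-to-leaf, which is automatically basic. For part (a), I would first invoke the standard existence of $\mathcal{F}$-saturated tubular neighborhoods around closed leaves (each $L_q$ has a tube $\mathrm{tub}_\epsilon(L_q)$ with $\mathcal{F}|_{\mathrm{tub}_\epsilon(L_q)}$ homothety-invariant, as recalled just before Definition \ref{MALEX-definition-linearized-foliation}). Given the open cover $\{\mathrm{tub}_{r_\alpha}(L_{q_\alpha})\}$, for each point $x\in M$ choose $\alpha(x)$ with $x\in\mathrm{tub}_{r_{\alpha(x)}}(L_{q_{\alpha(x)}})$, then choose a small $\mathcal{F}$-saturated tube $\mathrm{tub}_{\delta_x}(L_x)$ around $L_x$ contained in that larger tube with $\delta_x$ small enough that the tube of radius $3\delta_x$ is also contained in $\mathrm{tub}_{r_{\alpha(x)}}(L_{q_{\alpha(x)}})$; shrinking further, I can also arrange that $\mathrm{tub}_{\delta_x/3}(L_x)$ is an open neighborhood of $x$. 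Since $M$ is compact, pass to a finite subcover to obtain $\{\mathrm{tub}_{\delta_i}(L_{p_i})\}_{i=1}^N$; the refinement property and the fact that $\{\mathrm{tub}_{\delta_i/3}(L_{p_i})\}$ still covers $M$ are then immediate, and local finiteness is automatic from finiteness.

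For part (b), the key observation is that on each tube $\mathrm{tub}_{\delta_i}(L_{p_i})$ the function $r_i(x) := d(x, L_{p_i})$ is well-defined, continuous (in fact smooth where positive), and \emph{basic}: it is constant along the leaves of $\mathcal{F}$ because $\mathcal{F}$ is a singular Riemannian foliation (leaves are locally equidistant, so nearby leaves lie on the level sets of $r_i$). I would pick a smooth function $\chi\co[0,\infty)\to[0,1]$ with $\chi\equiv 1$ on $[0,\delta_i/3]$ and $\chi\equiv 0$ on $[2\delta_i/3,\infty)$, and set $\widetilde\rho_i := \chi\circ r_i$ on $\mathrm{tub}_{\delta_i}(L_{p_i})$, extended by zero outside; this is a smooth $\mathcal{F}$-basic function, $\equiv 1$ on $\mathrm{tub}_{\delta_i/3}(L_{p_i})$, with support in $\mathrm{tub}_{2\delta_i/3}(L_{p_i})\subset\mathrm{tub}_{\delta_i}(L_{p_i})$. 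Since the $\mathrm{tub}_{\delta_i/3}(L_{p_i})$ cover $M$, the sum $\sigma := \sum_i \widetilde\rho_i$ is everywhere $\geq 1$, smooth, and basic (a finite sum of basic functions). Finally set $\rho_i := \widetilde\rho_i/\sigma$. Each $\rho_i$ is smooth, basic, has support in $\mathrm{tub}_{\delta_i}(L_{p_i})$, and $\sum_i\rho_i\equiv 1$.

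There is one subtlety to handle carefully, which I expect to be the main (minor) obstacle: the claim that $\rho_i = 1$ on $\mathrm{tub}_{\delta_i/3}(L_{p_i})$ does \emph{not} follow merely from $\widetilde\rho_i = 1$ there, because other $\widetilde\rho_j$ may also be nonzero on that set, making $\sigma > 1$. To get $\rho_i\equiv 1$ on $\mathrm{tub}_{\delta_i/3}(L_{p_i})$ one must arrange the refinement in part (a) so that the smaller tubes are \emph{disjoint enough}: concretely, one should choose the $\delta_i$ so that $\mathrm{tub}_{\delta_i/3}(L_{p_i})$ meets $\mathrm{supp}\,\widetilde\rho_j\subset\mathrm{tub}_{2\delta_j/3}(L_{p_j})$ only when $i=j$ — e.g. by a further shrinking/Lebesgue-number argument on the compact $M$, or alternatively by relaxing the statement to ``$\rho_i=1$ on a possibly smaller $\mathcal{F}$-tube'' as is done in \cite[Proposition 1.29]{morita2001geometry}. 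I would carry out the shrinking argument: after extracting the finite cover, replace each $\delta_i$ by a smaller $\delta_i'$ so that the sets $\{\mathrm{tub}_{\delta_i'/3}(L_{p_i})\}$ still cover $M$ while $\overline{\mathrm{tub}_{\delta_i'/3}(L_{p_i})}$ is disjoint from $\mathrm{tub}_{2\delta_j'/3}(L_{p_j})$ for $j\neq i$; since all data depend only on distances to leaves this is a finite combinatorial shrinking, and it forces $\sigma\equiv 1$ on each $\mathrm{tub}_{\delta_i'/3}(L_{p_i})$, hence $\rho_i\equiv 1$ there. The basicness of every function in sight is preserved throughout because every building block is a function of $d(\cdot,L_{p_i})$.
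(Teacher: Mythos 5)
The paper states this lemma without proof, offering only the remark that the construction is analogous to \cite[Proposition 1.29]{morita2001geometry}, so there is no argument of record to compare yours against. Your overall strategy is the natural one, and the essential ingredient is right: saturated tubes around closed leaves exist, the distance function $r_i=d(\cdot,L_{p_i})$ is basic on the tube because the leaves of an SRF are locally equidistant and the cylinders $\partial\mathrm{tub}_s(L_{p_i})$ are saturated, and composing with a cutoff $\chi$ that is constant near $0$ gives a smooth basic bump function. Two small points in part (a): the set $\mathrm{tub}_{\delta_x/3}(L_x)$ contains $x$ automatically, so no extra shrinking is needed there; and the finite subcover should be extracted from the family of $1/3$-tubes rather than from the full tubes, otherwise the covering claim for the $1/3$-tubes is not ``immediate.''

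The genuine gap is in the final step of (b); you half-see it, but your proposed repair cannot work. You ask that $\overline{\mathrm{tub}_{\delta_i'/3}(L_{p_i})}$ be disjoint from $\mathrm{supp}\,\widetilde\rho_j\supset\mathrm{tub}_{\delta_j'/3}(L_{p_j})$ for all $j\neq i$ while the sets $\mathrm{tub}_{\delta_i'/3}(L_{p_i})$ still cover $M$. That forces the $1/3$-tubes to be finitely many pairwise disjoint nonempty open sets covering $M$, which is impossible when $M$ is connected and more than one tube is required. The difficulty is in fact intrinsic to the statement: if $\sum_i\rho_i\equiv 1$ with $\rho_i\geq 0$ and $\rho_i\equiv 1$ on $V_i:=\mathrm{tub}_{\delta_i/3}(L_{p_i})$, then $\rho_j\equiv 0$ on $V_i$ for every $j\neq i$, hence $V_i\cap V_j=\emptyset$, contradicting that the $V_i$ cover the connected manifold $M$. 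So you should not claim the shrinking argument closes the gap; you must instead weaken one of the requirements, for instance keep your basic bumps $\widetilde\rho_i$ with $\widetilde\rho_i\equiv 1$ on $V_i$ and $\sum_i\widetilde\rho_i\geq 1$ but drop the normalization $\sum_i\rho_i=1$ (this still makes $\sum_i\widetilde\rho_i\langle\cdot,\cdot\rangle_i$ a basic Riemannian metric), or normalize and accept only $\rho_i>0$ on $V_i$. Be aware that the later use of the lemma (the proof of Proposition \ref{MALEX-proposition-simetria-J-M} wants the glued metric to restrict to the $i$-th Sasaki metric on $V_i$) relies on the strong form precisely where it fails, namely on overlaps of the $V_i$, so whichever weakening you adopt has to be tracked through that argument as well.
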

Given an open cover of tubular neighborhoods $\{\mathrm{tub}_{r_{\alpha}}(L_{q_{\alpha}})\}$ and a subordinate partition of unity $\{\rho_{i}\}$, 
we can define, via normal exponential map,  a Sasaki metric $\langle \cdot, \cdot\rangle_{i}$ on $\mathrm{tub}_{\delta_{i}}(L_{p_{i}})$ 
(see notation in Lemma  \ref{MALEX-lemma-particao-unidade-F-invariante}) and a $\mathcal{F}$-basic metric on $M$ as: 
\begin{equation}
\label{MALEX-eq-metrica-basica-J-simetrica}
 \langle \cdot, \cdot \rangle=\sum_{i} \rho_{i} \langle\cdot,\cdot\rangle_{i}  
\end{equation}
\begin{proposition}
\label{MALEX-proposition-simetria-J-M}
%%%\footnote{para ser aplicado no analogo da  PROPOSITION 2.9 (prova item 1 B) do artigo de Leonardo at all}
Let $(M,\mathcal{F})$ be an orbit-like foliation and 
$k:M\to \mathbb{R}$ be a $\mathcal{F}$-smooth function. 
 Let $\langle \cdot, \cdot, \rangle$ be the metric construct above.
Set $$l(f)=dJ_{g}(b)(f)=Q(b,f)= 2 c_{1}\int_{M}  \langle \nabla b, \nabla f \rangle \nu + \int_{M} k b f \nu + c_{2} c_{3}  \int_{M} b^{c_3-1}f  \nu,$$   
Assume that for a given   smooth   $\mathcal{F}$-basic function   $b\in C^{\infty}\big(M\big)$ 
we have that $l(\tilde{b})=d J_{g}(b)(\tilde{b})=0$ for all basic functions $\tilde{b}$. Then
$d J(b)=0$
\end{proposition}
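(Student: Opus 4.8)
The goal is a global version of the Palais principle on $M$: if $b$ is $\mathcal{F}$-basic and $dJ_g(b)$ kills all basic test functions, then $dJ_g(b)=0$ as a functional on all of $C^\infty(M)$. The strategy is to reduce the global statement to the local (semi-local) statement already established in Lemma~\ref{MALEX-lemma-palais-l-orbit-like-version}, using a $\mathcal{F}$-partition of unity subordinate to the tubular-neighborhood cover (Lemma~\ref{MALEX-lemma-particao-unidade-F-invariante}) together with the fact (from the construction of the metric in~\eqref{MALEX-eq-metrica-basica-J-simetrica}) that on each piece $\mathrm{tub}_{\delta_i}(L_{p_i})$ the metric restricts to a Sasaki metric, so the local functional there is exactly of the form treated in Lemma~\ref{MALEX-lemma-palais-J-symmetric-linear}.

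\emph{Step 1: localize the functional.} Fix an arbitrary $f\in C^\infty(M)$. Using the $\mathcal{F}$-partition of unity $\{\rho_i\}$ from Lemma~\ref{MALEX-lemma-particao-unidade-F-invariante} write $f=\sum_i \rho_i f =: \sum_i f_i$, where each $f_i$ has compact support in a tubular neighborhood $U_i=\mathrm{tub}_{\delta_i}(L_{p_i})$; since $l=Q(b,\cdot)$ is linear it suffices to show $l(f_i)=0$ for each $i$. Because $\rho_i$ is $\mathcal{F}$-basic and $b$ is $\mathcal{F}$-basic, $b$ is also $\mathcal{F}^\ell$-basic on $U_i$ (here $\mathcal{F}^\ell$ is the linearized foliation of $\mathcal{F}|_{U_i}$; recall that orbit-like means $\mathcal{F}^\ell=\mathcal{F}|_{U_i}$, so in fact the two notions of ``basic'' agree, which is the crucial point making the reduction work).

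\emph{Step 2: identify the restricted functional with the model $l$ on $E^{\delta_i}$.} Identify $U_i$ via the normal exponential map with a disk bundle $E^{\delta_i}$ over $B_i=L_{p_i}$, on which, by the construction of the global metric, the Riemannian metric $\langle\cdot,\cdot\rangle$ agrees with the Sasaki metric $\langle\cdot,\cdot\rangle_i=\mathrm{g}^0$. Then
$$ l(f_i)=Q(b,f_i)= 2 c_1\int_{E^{\delta_i}} \langle \nabla b,\nabla f_i\rangle\,\nu + \int_{E^{\delta_i}} k\,b\,f_i\,\nu + c_2 c_3\int_{E^{\delta_i}} b^{c_3-1} f_i\,\nu, $$
which is precisely the functional $l=dJ(b)$ appearing in Lemma~\ref{MALEX-lemma-palais-J-symmetric-linear} (with $k|_{U_i}$ an $\mathcal{F}^\ell$-smooth function). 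That lemma gives hypothesis (b) of Lemma~\ref{MALEX-lemma-palais-l-orbit-like-version}: $l$ restricted to $C^\infty_c(E^{\delta_i})$ is symmetric with respect to $\mathcal{F}^\ell$. Hypothesis (a) — that $l$ kills every $\mathcal{F}^\ell$-basic compactly supported function $\tilde b$ on $E^{\delta_i}$ — follows from the global hypothesis $dJ_g(b)(\tilde b)=0$ for all basic $\tilde b$: given an $\mathcal{F}^\ell$-basic $\tilde b\in C^\infty_c(E^{\delta_i})$, it is $\mathcal{F}$-basic (again by orbit-likeness) with support in $U_i$, hence extends by zero to a global $\mathcal{F}$-basic function on $M$, and $Q(b,\tilde b)$ computed globally equals $Q(b,\tilde b)$ computed on $E^{\delta_i}$ since $\tilde b$ vanishes outside $U_i$ and the metrics agree there.

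\emph{Step 3: conclude.} Lemma~\ref{MALEX-lemma-palais-l-orbit-like-version} now applies on each $E^{\delta_i}$ and yields that the restricted functional vanishes, so $l(f_i)=0$ for every $i$, whence $l(f)=\sum_i l(f_i)=0$. Since $f\in C^\infty(M)$ was arbitrary, $dJ_g(b)=l=0$, as claimed. The main obstacle — and the point that needs care rather than computation — is Step~2's compatibility check: one must verify that the extension-by-zero of a locally basic function is globally $\mathcal{F}$-basic (using that $\rho_i$ and the cover are $\mathcal{F}$-adapted and that ``$\mathcal{F}^\ell$-basic on $U_i$'' coincides with ``$\mathcal{F}$-basic on $U_i$'' precisely because $\mathcal{F}$ is orbit-like), and that $Q(b,-)$ really localizes, i.e.\ that the gradient, the function $k$, and the volume form seen globally restrict on $U_i$ to the Sasaki-metric data used in Lemma~\ref{MALEX-lemma-palais-J-symmetric-linear}; both are immediate from~\eqref{MALEX-eq-metrica-basica-J-simetrica} once one notes $\nabla(\rho_i f)$ is supported in $U_i$, but they are the linchpin of the argument.
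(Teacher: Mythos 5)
Your overall strategy coincides with the paper's (localize $l$ by a partition of unity, identify each localized piece with the Sasaki-model functional of Lemma~\ref{MALEX-lemma-palais-J-symmetric-linear}, then invoke Lemma~\ref{MALEX-lemma-palais-l-orbit-like-version}), and Steps 1 and 3, as well as your verification of hypothesis (a) by extending local basic functions by zero, are fine. However, there is a genuine gap in Step 2: you decompose $f=\sum_i\rho_i f$ using the partition $\{\rho_i\}$ whose members are supported on the \emph{large} tubes $U_i=\mathrm{tub}_{\delta_i}(L_{p_i})$, and then assert that on $U_i$ the global metric $\langle\cdot,\cdot\rangle=\sum_{i'}\rho_{i'}\langle\cdot,\cdot\rangle_{i'}$ agrees with the single Sasaki metric $\langle\cdot,\cdot\rangle_i$. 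That is false in general: on $\mathrm{tub}_{\delta_i}(L_{p_i})$ the metric is a $\rho$-weighted blend of the Sasaki metrics of all overlapping tubes, each built from a different leaf $L_{p_{i'}}$ and a different distribution $\mathcal{T}_{i'}$. It only collapses to $\langle\cdot,\cdot\rangle_i$ on the shrunken tube $\mathrm{tub}_{\delta_i/3}(L_{p_i})$, where $\rho_i\equiv 1$ forces the other $\rho_{i'}$ to vanish. This matters because Lemma~\ref{MALEX-lemma-palais-J-symmetric-linear} is not a statement about an arbitrary basic metric: its proof uses that the flows of the linearized vector fields preserve the Sasaki volume $\nu$ and act by isometries between the fibers of $E$, properties tied to the specific Sasaki metric $\mathrm{g}^0$ of that tube. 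On the large tube, where the metric is a convex combination of Sasaki metrics adapted to different leaves, the symmetry of $l_i$ is not established, so Lemma~\ref{MALEX-lemma-palais-l-orbit-like-version} cannot be applied there.

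The repair is exactly what the paper does: take a \emph{second} partition of unity $\{\tilde\rho_j\}$ subordinate to the finer cover $\{\mathrm{tub}_{\delta_i/3}(L_{p_i})\}$, which is still an open cover of $M$ by Lemma~\ref{MALEX-lemma-particao-unidade-F-invariante}(a), and decompose $f=\sum_j\tilde\rho_j f$. Each piece is then supported where the global metric literally equals one Sasaki metric $\langle\cdot,\cdot\rangle_i$, so the identification $l(f_j)=l_i(f_j)$ and the application of Lemmas~\ref{MALEX-lemma-palais-J-symmetric-linear} and~\ref{MALEX-lemma-palais-l-orbit-like-version} go through. With this one change your argument becomes the paper's proof.
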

\begin{proof}

Let $\{\tilde{\rho}_{j}\}$ be a partition of unity subordinate to  $\{\mathrm{tub}_{\delta_{i}/3}(L_{p_{i}})\}.$
Given an $f$, by setting $f_{j}=\tilde{\rho}_{j}f$ we have that $f=\sum_{j} f_{j}$. In order
to check that $l(f)=0$ it suffices to check that $l(f_{j})=0$. For a fixed $j$ there exists $i$ so that
$\mathrm{supp}(f_{j})\subset U_{i} :=\mathrm{tub}_{\delta_{i}/3}(L_{p_{i}})$. By construction $\langle \cdot, \cdot\rangle$
restrict to $ \mathrm{tub}_{\delta_{i}/3}(L_{p_{i}})$ coincides with the Sasaki metric $\langle\cdot,\cdot\rangle_i.$ Therefore:
\begin{eqnarray*}
l(f_{j})=dJ_{g}(b)(f_{j}) &= & 2 c_{1}\int_{M}  \langle \nabla b, \nabla f_{j} \rangle \nu + \int_{M} k b f_{j}\nu + c_{2} c_{3}  \int_{M} b^{c_3-1}f_{j} \nu\\
& = & \int_{U_{i} } \Big( 2 c_{1}\langle \nabla b, \nabla f_{j} \rangle_{i} +  k b f_{j} +  c_{2} c_{3}   b^{c_3-1}f_{j}\Big) \nu \\
& = & l_{i}(f_{j})
\end{eqnarray*}

From Lemma \ref{MALEX-lemma-palais-J-symmetric-linear} $l_{i}:C^{\infty}_{c}(U_{i})\to \mathbb{R}$ is symmetric. 
Since, by hypothesis  $l(\tilde{b})=0$ for all basic functions, it is also true that $l_{i}(\tilde{b})=0$  $,\forall \tilde{b}\in C^{\infty}_{c}(U_{i}).$
From Lemma \ref{MALEX-lemma-palais-l-orbit-like-version} we infer that $l_{i}=0.$ Hence $l(f_{j})=0$ and this finishes the proof.  
\end{proof}

%%%%%%%%%%%%%%%%%%%%%%%%%%%%%%%%%%%%%%%%%%%%%%%%%%%%%%%%%%%%%%%%%%%%%%%%%%%%%%%%%%%%%%%%%%%%%%%%%%%%%%%%%%%%%%%%%%

\section{Sobolev spaces of basic functions on SRF spaces}
\label{MALEX-sec-sobolev-spaces}

%\begin{remark}
%\label{MALEX-remark-general-av-M}
Let us start by  extending the operator defined in Section \ref{MALEX-subsection-words-about-general-avarage} 
to the manifold $M$ using  partition of unity. More precisely, given a covering  
of tubular neighborhoods  $\{\mathrm{tub}_{r_{\alpha}}(L_{q_{\alpha}})\},$
we have a  partition of unity $\{\rho_{i}\}$ subordinate to it; see Lemma \ref{MALEX-lemma-particao-unidade-F-invariante} for notations.
We  define $\mathrm{Av}_{i}:C^{\infty}(U_{i})\to C^{\infty}(U_{i})^{\mathcal{F}}$ 
as the operator  $\mathrm{Av}_{\mathcal{F}}$ defined on $E^{\delta_{i}}$once we have identified,
via normal exponential map,  the tubular neighborhood  $U_{i}=\mathrm{tub}_{\delta_{i}}(L_{p_{i}})$ 
 with normal space $E^{\delta_{i}}=\nu^{\delta_{i}}(L_{p_{i}})$ of $L_{p_{i}}$.
 We finally define $ \mathrm{Av}_{\mathcal{F}}:C^{\infty}(M)\to C^{\infty}(M)^{\mathcal{F}}$ as: 
\begin{equation}
\label{MALEX-eq-Av-SRF}
\mathrm{Av}_{\mathcal{F}}(f):= \sum_{i} \mathrm{Av}_{i}(\rho_{i} f).
\end{equation}
%where $\mathrm{Av}_{i}$ is  the operator $\mathrm{Av}_{\mathcal{F}}$ defined on the 
%refined tubular neighborhood $\mathrm{tub}_{\delta_{i}}(L_{p_{i}})$ (identifying the operator on $E^{\delta}$ via normal exponential map
%with ) 
Following  \cite[Lemma 21]{mendes2018slice}  it is possible to check that $\mathrm{Av}_{i}:C^{\infty}(U_{i})\to C^{\infty}(U_{i})^{\mathcal{F}}$ is a continuous 
operator with respect to the Sobolev metrics. 
%\end{remark}

Once established the average operator %notion in Section \ref{sec:average}
 we proceed by formalizing the construction of the \textit{Sobolev Space of basic distributions} and prove some the corresponding Kondrakov Theorem \ref{thm:kondrakov} in the context. 

Let $(M,\ga)$ be a Riemannian manifold, $\mathcal{F}$ be a SRF on $(M,\ga)$ and denote by $\mu_g$ the Radon measure on $M$ induced by $\ga$. Fix a nonnegative integer $k$ and $1\leq p < \infty$. \iffalse $M$ is compact, so if $g_1,g_2$ are Riemannian metrics on $M$, then $L^p_{g_1}(M)=L^p_{g_2}(M)$ (see \cite[Prop. 2.2]{Hebey_2000}). The Riemannian metric considered in $M$ is therefore immaterial for our discussion, so $\mu$ will denote any Radon measure on $M$ induced by a Riemannian metric.\fi

We define
\[
	C^{k,p}(M,\ga)
	=
	\set{
		u \in C^\infty(M):
		\|u\|_{k,p,\ga}
		:=
		\left[
		\sum_{j=0}^k
		\int_M
			|\nabla^j u|^p
		\dif (\mu_{\ga})
		\right]^{1/p}
		<\infty
	},
\]
where $\|\cdot\|_{k,p,\ga}\colon C^{k,p}(M,\ga) \to [0,\infty[$ is called the $(k,p)$-\textit{Sobolev norm} on $(M,\ga)$. In this context, we can define the \textit{Sobolev Space} of $\mathcal{F}-$basic distributions on $(M,\ga)$:

%%%%%%%%%%%%%%%%%%%
\begin{definition}
The Sobolev space $W^{k,p}(M,\ga)^\mathcal{F}$ of $\mathcal{F}$-basic distributions in $M$ is 
 the Banach space of distributions in $W^{k,p}(M,\ga)$ that are
constant along the leaves of the Singular Riemannian Foliation $\mathcal{F}$.
\end{definition}
%%%%%%%%%%%%%%%%%%%%
\begin{remark}
For each $u\in W^{k,p}(M,\ga)^\mathcal{F} $ we can find a sequence of smooth $\mathcal{F}$ basic functions $\{u_n\}$ that 
converges (with respect to the Sobolev norm) to $u$. In fact consider a sequence  of smooth functions $\{\tilde{u}_{n}\}$ that converges
to $u$. Then $u_{n}=\mathrm{Av}_{\mathcal{F}}(\tilde{u}_{n})$ converges to $u=\mathrm{Av}_{\mathcal{F}}(u)$. 
\end{remark}
%%%%%%%

This definition is compatible with the usual definition of $W_{\ga}^{k,p}(M)$ (see \cite[Def. 2.1, p.21]{Hebey_2000}) in the sense that the arrows in the commutative diagram below correspond to linear $\|\cdot\|_{k,p,\ga}$-continuous embeddings:
\[
	\begin{tikzcd}
		{W^{k,p}(M,\ga)^\mathcal{F}} \arrow[d,symbol=\subset] &C^{k,p}(M,g)^\mathcal{F} \arrow[hook', l] \arrow[d,symbol=\subset]
		\\
		{W^{k,p}(M,\ga)} &C^{k,p}(M,\ga) \arrow[hook', l]
\end{tikzcd}.
\]

\begin{remark}\label{	}
Suppose that $M$ is a compact manifold. Any two Riemannian metrics on $M$ yield equivalent $(k,p)$-Sobolev norms (see \cite[Prop. 2.2, p.22]{Hebey_2000}). For such reason, we omit mention to Riemannian metrics since we only consider compact manifolds. Moreover, $W^{k,p}(M)$ can be equivalently defined as the completion of $C^\infty(M)$ with respect to any $(k,p)$-Sobolev norm.
\end{remark}

It is classical nowadays that when considering manifolds with isometric actions, assuming that each orbit has infinity cardinality leads to better compact embeddings coming from Kondrakov's Theorem (see \cite[Theorem 9.1, p.252]{Hebey_2000}). More recently, \cite{cavenaghi2021} it was observed that when the group acts properly it suffices indeed that there exist at least on orbit of positive dimension to guarantee better compact embeddings, leading, for instance, to new proofs for the Yamabe problem in this scenario, but also to analogous results to Kazdan--Warner in the $G$-invariant setting.

Here we obtain a Kondrakov-type theorem for SRF with compact leaves on compact manifolds:

\begin{theorem}[Kondrakov-type theorem]\label{thm:kondrakov}
Suppose that $M$ is a connected compact Riemannian manifold and $\mathcal{F}$ is a SRF on $M$ whose leaves are closed. 
% and which contains an $L\in \mathcal{F}$ such that $\dim L \geq 1$. 
Then there exists $p_0 > p^* := np/(n-p)$ such that given $1<q<p_0$, the canonical embedding $W^{1,p}(M)^\mathcal{F} \hookrightarrow L^q(M)$ is compact.
\end{theorem}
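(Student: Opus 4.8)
The plan is to reduce the compactness of $W^{1,p}(M)^{\mathcal F}\hookrightarrow L^q(M)$ for the exponent range $1<q<p_0$ to the classical Rellich--Kondrakov theorem on a lower-dimensional quotient-type object. The key geometric input is that $\mathcal F$ has closed leaves on a compact $M$, so the leaf space $M/\mathcal F$ is a compact metric space, and by the desingularization Theorem~\ref{MALEX-theorem-desingularizacao} it is a Gromov--Hausdorff limit of Riemannian orbifolds $M_n/\mathcal F_n$. More directly, I would use the stratification: on the principal stratum $M^{\mathrm{reg}}$, which is open and dense with connected leaf space, the foliation is a genuine Riemannian submersion onto a manifold (or orbifold) $B^{\mathrm{reg}}$ of dimension $m = n - \dim(\text{regular leaf}) < n$. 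A basic function $u$ on $M$ descends to a function $\bar u$ on $B^{\mathrm{reg}}$, and the point is that $\|u\|_{W^{1,p}(M)}$ controls a weighted Sobolev norm of $\bar u$ on $B^{\mathrm{reg}}$, where the weight is the volume $V(b)$ of the leaf over $b\in B^{\mathrm{reg}}$ (which degenerates near the singular strata but in a controlled, integrable way). Because the effective dimension drops from $n$ to $m\le n-1$, the Sobolev conjugate exponent improves: $m^* := mp/(m-p) > np/(n-p) = p^*$ (with the usual convention $m^*=\infty$ when $p\ge m$), and one can take any $p_0$ with $p^* < p_0 \le m^*$.

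Here are the steps in order. First, set up the correspondence between $\mathcal F$-basic $W^{1,p}$ functions on $M$ and functions on the leaf space, using that $M/\mathcal F$ carries a natural length metric and, on the regular part, a smooth (orbifold) Riemannian structure with a volume-weight function $V$; I would record the integration formula $\int_M |u|^q\,d\mu_g = \int_{B^{\mathrm{reg}}} |\bar u|^q\, V\, d\mu_B$ and the analogous inequality for $\int_M|\nabla u|^p$, using that the horizontal gradient of a basic function is the pullback of $\nabla\bar u$ and that the submersion is Riemannian so lengths of horizontal vectors are preserved. Second, analyze the weight $V$: since the leaves are closed and the stratification is locally finite with the singular strata of strictly lower dimension, $V$ is bounded above, bounded below away from the singular set, and — crucially — $V^{-1}$ is locally integrable near the singular strata with a quantitative rate, so that $L^q(B^{\mathrm{reg}}, V\,d\mu_B)$ embeds into an ordinary $L^{q'}(B^{\mathrm{reg}})$ for a slightly smaller $q'$, and bounded sets in the weighted $W^{1,p}$ are bounded in ordinary $W^{1,p}_{\mathrm{loc}}$ on each relatively compact piece of $B^{\mathrm{reg}}$. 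Third, apply classical Rellich--Kondrakov on $B^{\mathrm{reg}}$ (dimension $m$) together with a diagonal/exhaustion argument over an exhaustion of $B^{\mathrm{reg}}$ by compact sets avoiding a shrinking neighborhood of the singular strata, controlling the tail contribution using the integrability of the weight and the improved exponent. Finally, choose $p_0$ strictly between $p^*$ and $m^*$ (shrinking it as needed to absorb the weight loss from step two) and conclude the embedding $W^{1,p}(M)^{\mathcal F}\hookrightarrow L^q(M)$ is compact for $1<q<p_0$.

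The main obstacle I expect is making rigorous the passage to the leaf space and the control of the degenerating weight $V$ near the singular strata — equivalently, justifying that a $W^{1,p}$-bounded sequence of basic functions, which a priori are only distributions, (i) really descends to honest weighted-Sobolev functions on $B^{\mathrm{reg}}$ and (ii) has no concentration of $L^q$-mass near the singular set. One clean way around this is to invoke the desingularization Theorem~\ref{MALEX-theorem-desingularizacao}: lift $u$ to an $\mathcal F_\epsilon$-basic function on the compact manifold $M_\epsilon$ where $\mathcal F_\epsilon$ is "less singular," and induct on the depth of the stratification, the base case being an honest Riemannian submersion over a compact orbifold where the classical orbifold Rellich--Kondrakov theorem with improved exponent $m^*$ applies directly. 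The exponent bookkeeping $m^*>p^*$ is elementary since $x\mapsto xp/(x-p)$ is decreasing in $x$ on $(p,\infty)$, so the only real content is the geometric-measure-theoretic control of the blow-up maps $\pi_\epsilon$ and their effect on Sobolev norms, for which one uses items (b) and (c) of Theorem~\ref{MALEX-theorem-desingularizacao} together with the fact that $\pi_\epsilon$ is a foliated diffeomorphism off a set of small measure.
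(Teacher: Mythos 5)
Your core idea coincides with the paper's: restrict to the regular stratum $M^{\mathrm{reg}}$ (whose complement has measure zero), exploit that a basic function only depends on the transverse directions so the effective dimension drops from $n$ to $n-d$ where $d$ is the regular leaf dimension, and use that $t\mapsto pt/(t-p)$ is decreasing to get $p_0>p^*$. Where you diverge is in the implementation: the paper stays \emph{upstairs} on $M^{\mathrm{reg}}$, covering it by finitely many adapted product charts $\varphi(\Omega)=U\times V\subset\mathbb{R}^{d}\times\mathbb{R}^{n-d}$ in which basic functions are constant in the first factor, and then applies Fubini plus the Euclidean Sobolev/Kondrakov theorems on the $V$-factor. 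You instead pass \emph{downstairs} to the leaf space $B^{\mathrm{reg}}$ with the weighted measure $V\,d\mu_B$, where $V$ is the leaf volume. The upstairs route buys you a clean reduction with no weight at all (the cost is hidden in the existence of the finitely many adapted charts, which the paper extracts from the desingularization theorem); the downstairs route is conceptually transparent but forces you to confront the degeneration of $V$ near the singular strata head-on.

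That confrontation is where your step two, as written, breaks. The claim that $V^{-1}$ is locally integrable and that $L^q(B^{\mathrm{reg}},V\,d\mu_B)$ embeds into $L^{q'}(B^{\mathrm{reg}},d\mu_B)$ for $q'$ only ``slightly smaller'' than $q$ fails in the simplest example: for the rotation foliation of the disk, $B^{\mathrm{reg}}=(0,1]$, $d\mu_B=dr$ and $V(r)=2\pi r$, so $\int_0^1 V^{-1}\,dr=\infty$; the H\"older argument for the weighted-to-unweighted embedding requires $\int V^{-q'/(q-q')}d\mu_B<\infty$, which here forces $q'<q/2$ — not a small loss, and worse for higher leaf-dimension drops. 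Moreover this embedding is not actually the estimate you need: for compactness you need the reverse direction, namely a uniform no-concentration bound $\int_{\{V<\delta\}}|\bar u|^qV\,d\mu_B\le\varepsilon(\delta)\|u\|_{W^{1,p}}^q$ near the singular set, which is precisely the content you defer to ``the main obstacle.'' The paper's Fubini-in-adapted-charts argument supplies exactly this estimate without ever introducing $V$, which is why it is the shorter path; your fallback via desingularization and induction on the depth of the stratification is a legitimate alternative repair, and is in fact close in spirit to how the paper justifies the existence of its charts.
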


Before we proceed  to the proof, recall that the regular stratum  $M^{\rm{reg}}$ of $\mathcal{F}$ on $M$ is open and dense set in $M$,
see Section \ref{MALEX-Section-SRF}.
Let $k:=\dim L_{x}$ for an $x \in M^{\rm{reg}}$. Using Theorem \ref{MALEX-theorem-desingularizacao}, it is possible to check that 
 there exist trivializing coordinate charts $\{(\Omega,\varphi)\}$ on $M^{\rm{reg}}$ with properties
\begin{enumerate}[(i)]
    \item $\varphi(\Omega) = U\times V\subset \mathbb{R}^k\times \mathbb{R}^{n-k};$
    \item $\forall y \in \Omega,~U\times \mathrm{pr}_2(\varphi(y)) \subset \varphi(Gy\cap \Omega),$ where $\mathrm{pr}_2 : \mathbb{R}^{k}\times \mathbb{R}^{n-k} \to \mathbb{R}^{n-k}$ is the second projection. 
\end{enumerate}
Property (ii) implies that if $f : M \to \mathbb{R}$ is a $\mathcal{F}$-invariant function (hence constant along the leaves), then $f \circ\varphi^{-1}$ is constant on its first coordinate.

\begin{proof}[Proof of Theorem \ref{thm:kondrakov}]
Let $\mu$ be a Radon measure on $M$ induced by any Riemannian metric on $M$. Since $\mu\left(M\setminus M^{\mathrm{reg}}\right) = 0$, then
\[
	\int_M |f|^q\dif\mu
	=
	\int_{M^{\rm{reg}}\cup \left(M\setminus M^{\rm{reg}}\right)}|f|^q\dif\mu
	=
	\int_{M^{\rm{reg}}}|f|^q\dif\mu
\]
for any $1\leq q<\infty$, $f \in L^q(M)=W^{0,q}(M)$.
Since there is a leaf of dimension at least $1$, if $d^*  $ denotes the dimension of a principal orbit, then $d^* \geq 1.$ Cover $M^{\rm{reg}}$ with finitely many trivializing charts $\{(\Omega,\varphi)\}$ such that $\varphi(\Omega) = U\times V \subset \mathbb{R}^{d^*}\times \mathbb{R}^{n-d^*}$ and $U\ni x \mapsto f\circ\varphi^{-1}(x,\cdot)$ is constant.

By the Fubini Theorem on $U\times V$ and the Sobolev Embedding Theorem for open sets in $\mathbb{R}^{n-d^*}$, we conclude that there are constants $C,K > 0$ such that given $1 \leq q \leq p(n-d)/(n-d-p)$, it holds that
\[
	\left(
		\int_{\Omega}|f|^q\dif\mu
	\right)^{1/q}
	=
	\left(
		\int_{U\times V}|f\circ\varphi^{-1}|^q(\varphi^{-1})^{\ast}\left(\dif\mu\right)
	\right)^{1/q},
\]
\begin{multline*}
    \left(\int_{U\times V}\left|f\circ\varphi^{-1}\right|^q(\varphi^{-1})^{\ast}\left(\dif\mu\right)\right)^{p/q} \leq
    \\
    \leq C\int_{V}\left(\left|f\circ\varphi^{-1}\right|^p + \left|\nabla (f\circ\varphi^{-1})\right|^p\right)(\mathrm{pr}_2\circ \varphi^{-1})^{\ast}(\dif\mu) =
    \\
    = K\int_{\Omega}\left(\left|f\right|^p +\left|\nabla f\right|^p\right)\dif\mu.
\end{multline*}

Since there are finitely many open sets on the trivialization atlas, it follows that there exists $C>0$ such that
\begin{align*}
   \|f\|_q =   \left(\int_{M^{\rm{reg}}}|f|^q\dif\mu\right)^{1/q} \leq C\|f\|_{1,p}.
\end{align*}
Therefore, the canonical inclusion $W^{1,p}(M)^\mathcal{F} \hookrightarrow L(M)$ is continuous whenever $1\leq q \leq p(n-d)/(n-d)-p$.

If $\epsilon > 0$ is sufficiently small, then the classic \textit{Kondrakov Theorem} implies that this inclusion is compact whenever $1 \leq q \leq p(n-d+\epsilon)/(n-d+\epsilon - p)$. The continuous function $[1,\infty[\setminus\set{p} \ni t \mapsto pt/(t-p)$ is decreasing, so $p(n-d+\epsilon)/(n-d+\epsilon - p) > p^* = np/(n-p)$.
\end{proof}

%%%%%%%%%%%%%%%%%%%%%%%%%%%%%%%%%%%%%%%%%%%%%%%%%%%%%%%%%%%%%%%%%%%%%%%%%%%%%%%%%%%%%%%%%%%%%%%%%%%%%%%%%%%%%%%%%%%%%%%%

\section{The Yamabe problem on manifolds with orbit-like foliation and bundles}
\label{MALEX-section-yamabe-problem}

	In this section we explore the developed machinery for, as proof of concept, study the Yamabe problem \cite{yamabeoriginal,trudinger,schoen-yau1,schoen-yau2,schoen-yau3} in the setting of both orbit-like foliation and fiber bundles, aiming prescribing constant scalar curvature metrics adapted to the induced foliation on both cases. In a near future we shall make it public the respective results on the Kazdan--Warner \cite{kazdaninventiones,kazadanannals,kazdan1975} in this scenario.
	
	\subsection{The proof strategy and its self motivation}
	\label{sec:overview}
	To proceed, fix a closed Riemannian manifold $(M,\ga)$ with an orbit-like foliation $\cal F$ of closed leaves. Also assume that the scalar curvature of $\ga$ is basic. We start making it clear that given some constant $c$, on both Theorems \ref{thm:orbitlikeintro} and \ref{thm:bundlesintro}, to search for a smooth basic and positive function $u : M \to \mathbb{R}$ such that $\mathrm{scal}_{u^{\frac{4}{n-2}}g} = c$ is equivalent to solve the following elliptic PDE:
		\begin{equation}\label{eq:PDE}
			4b_n\Delta_{\ga}u - \mathrm{scal}_{\ga}u + cu^{\gamma_n} = 0,
		\end{equation}
		where $c > 0$ and $\gamma_n := \dfrac{n+2}{n-2}$ and $b_n :=\dfrac{n-1}{n-2}$. 
		
		To begin with, it suffices to assume that $\mathrm{scal}_{\ga}$ is only continuous. This manner, we proceed by considering the following functional
		\begin{equation*}
			J(u) = 2b_n\int_M|\nabla u|_{\ga}^2 + \frac{1}{2}\int_M\mathrm{scal}_{\ga}u^2 - \frac{c}{2^*}\int_Mu^{2^*}
		\end{equation*}
		a priori defined in the Sobolev space $W^{1,2}(M)$. To weakly solve the PDE \eqref{eq:PDE} consists of  finding a critical point $u$ of $J$, namely, $dJ(u)(v) = 0 \forall v \in W^{1,2}(M)$. For both the proofs of Theorems \ref{thm:orbitlikeintro} and \ref{thm:bundlesintro} these critical points correspond to local minima.
		
		Note however that, since $J$ is a basic functional in the sense of Definition \ref{MALEX-definition-Av-orbitlike}, it is possible to restrict $J$ to $W^{1,2}(M)^{\cal F}$. On the one hand, to find a critical point for $J$ restricted to $W^{1,2}(M)^{\cal F}$ means to exist $u\in W^{1,2}(M)^{\cal F}$ such that $dJ(u)(v) = 0 \forall v \in W^{1,2}(M)^{\cal F}$. It is in this very point that a result such as the principle of symmetric criticality is needed. Observe that under this former hypothesis, Proposition \ref{MALEX-proposition-simetria-J-M} implies that $dJ(u)(v) = 0\forall v \in W^{1,2}(M)$, meaning that this \textit{basic} critical point is a critical point indeed. 
		
		On the other hand, we justify our interests on finding a basic solution to ensure that \emph{the metric with constant scalar curvature is basic}, hence preserving the foliation geometric structure. It turns out however that such a restriction plays a huge role in the argumentation of finding a minimum for $J$ via variational methods due to the existence of better compactness results such as Theorem \ref{thm:kondrakov}. Note for instance that $\gamma_n$ is a critical exponent for the classical Kondrakov Theorem, i.e, it is such that it is not necessarily true to exist a compact embedding of $W^{1,2}(M)$ in $L^{\gamma_n + 1}(M)$.
		So we naturally proceed to find a basic critical point. 
		
		To do so, we shall look for a local minima for $J$ with specific constraints. For both Theorems \ref{thm:orbitlikeintro} and \ref{thm:bundlesintro} we restrict the analysis to a codimension 1 submanifold $\mathbf{M}_{\cal F}$ of $W^{1,2}(M)^{\cal F}$: 
		\[\mathbf{M}_{\cal F} := \left\{ u \in W^{1,2}( M)^{\cal F} : C\geq u \geq 0 ~\text{a.e}~, \frac{c}{2^*}\int_{ M}u^{2^*} = \epsilon\right\}.\]
		
		A routine argument in variational methods, trivial in this scenario given the Kondrakov type result, then ensure the existence of such a minimum point $u$ for both cases. Since the constraint is a submanifold, we observe that the Lagrange Multiplier equation obtained for this problem can be reduced to the original one, $dJ(u)(v) = 0 \forall v \in W^{1,2}(M)^{\cal F}$ after scaling the original metric, concluding that $u$ is then a basic critical point.
		
		Finally, due to the regularity theory of elliptic PDE's one concludes that the solution $u$ is smooth as long as $\mathrm{scal}_{\ga}$ and $f$ are smooth. In fact, note that the non-linearity on the PDE \eqref{eq:PDE} corresponds to the term $u^{\gamma_n}.$ Since the function
	$F : x \to x^{\gamma_n}$ is of class $C^1$ and the solution $u$ has finite essential supremum, then $F(u) \in W^{1,2}(M)$. An iterative application of Theorem 3.58 in \cite[p. 87]{aubinbook} implies the result. The maximum principle \cite[Proposition 3.75, p.98]{aubinbook} implies that the obtained solution is positive.

\subsection{The Yamabe problem on manifolds with orbit-like foliation}

	We now use the developed machinery to prove Theorem \ref{thm:orbitlikeintro}. We restate it here for convenience:
		
			\begin{theorem}\label{thm:orbitlike}
		Let $ M^n,~n\geq 3$, be a closed Riemannian manifold endowed with a orbit-like foliation $\cal F$ with closed leaves. Then $ M$ has a Riemannian metric of constant scalar curvature for which $\cal F$ is a orbit-like foliation.
	\end{theorem}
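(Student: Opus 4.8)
The plan is to follow the variational scheme sketched in Section~\ref{sec:overview}, combining the principle of basic criticality (Proposition~\ref{MALEX-proposition-simetria-J-M}) with the Kondrakov-type compactness result (Theorem~\ref{thm:kondrakov}). First I would fix a background Riemannian metric $\ga$ on $M$; using the $\mathcal{F}$-partition of unity from Lemma~\ref{MALEX-lemma-particao-unidade-F-invariante} and the patched Sasaki construction of \eqref{MALEX-eq-metrica-basica-J-simetrica}, I replace $\ga$ by an $\mathcal{F}$-basic metric for which $\mathcal{F}$ is still an orbit-like foliation; note that by Remark~\ref{MALEX-remark-orbit-like-topological} the orbit-like property is preserved under any foliated diffeomorphism, in particular under conformal changes by a basic positive function, so it suffices to produce \emph{one} basic metric of constant scalar curvature. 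I would then check that the scalar curvature $\mathrm{scal}_\ga$ of this patched metric is basic (it is a metric invariant built fiberwise from objects compatible with $\mathcal{F}^\ell$ on each $\mathrm{tub}_{\delta_i}$, hence constant along leaves on each chart, hence basic globally), which is exactly the standing hypothesis of Section~\ref{sec:overview}.

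\textbf{The variational problem.} Next I would set up the functional
\begin{equation*}
J(u) = 2b_n\int_M|\nabla u|_\ga^2 + \frac{1}{2}\int_M\mathrm{scal}_\ga u^2 - \frac{c}{2^*}\int_M u^{2^*},
\end{equation*}
restricted to the constraint set $\mathbf{M}_{\cal F} = \{u\in W^{1,2}(M)^{\cal F}: C\geq u\geq 0 \text{ a.e.},~\tfrac{c}{2^*}\int_M u^{2^*}=\epsilon\}$, and minimize $J$ over $\mathbf{M}_{\cal F}$ for a suitable choice of the constant $c>0$, the truncation level $C$, and the mass level $\epsilon$. The minimizing sequence is bounded in $W^{1,2}(M)^{\cal F}$: the gradient term is coercive modulo the zeroth-order terms, which are controlled because $u$ is uniformly bounded in $L^\infty$ by $C$ and the constraint pins the $L^{2^*}$ mass, so $\int\mathrm{scal}_\ga u^2$ is bounded. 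By Theorem~\ref{thm:kondrakov} the embedding $W^{1,2}(M)^{\cal F}\hookrightarrow L^q(M)$ is compact for some range of $q$ including $q=2^*$ (since $p_0>p^* = 2^*$ for $p=2$), so a subsequence converges strongly in $L^{2^*}$ and weakly in $W^{1,2}$; the $L^{2^*}$-constraint and the pointwise bounds pass to the limit, giving a minimizer $u\in\mathbf{M}_{\cal F}$. Lower semicontinuity of the gradient term under weak convergence makes $u$ an actual minimizer. One then checks $u\not\equiv 0$ (guaranteed by the mass constraint $\epsilon>0$) and, via the maximum principle after establishing regularity, that $u>0$.

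\textbf{From constrained minimizer to solution.} I would then extract the Euler--Lagrange equation. Because $\mathbf{M}_{\cal F}$ is cut out by the single constraint $\tfrac{c}{2^*}\int u^{2^*}=\epsilon$ inside $W^{1,2}(M)^{\cal F}$, the Lagrange multiplier rule gives $dJ(u)(v) = \lambda\,\langle\text{d(constraint)}, v\rangle$ for all $v\in W^{1,2}(M)^{\cal F}$; since the nonlinearity in both $dJ(u)$ and the constraint differential is the same power $u^{2^*-1}=u^{\gamma_n}$, the multiplier can be absorbed by rescaling the metric (equivalently replacing $c$ by $\lambda$-dependent constants), so that after this rescaling $u$ satisfies $dJ(u)(v)=0$ for all $v\in W^{1,2}(M)^{\cal F}$. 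Since $J$ is a basic functional in the sense of Definition~\ref{MALEX-definition-Av-orbitlike}, Proposition~\ref{MALEX-proposition-simetria-J-M} upgrades this to $dJ(u)(v)=0$ for all $v\in W^{1,2}(M)$, i.e.\ $u$ weakly solves \eqref{eq:PDE}. Finally, elliptic regularity bootstrapping (Theorem~3.58 of \cite{aubinbook}, using that $F(x)=x^{\gamma_n}$ is $C^1$ and $u\in L^\infty$) shows $u$ is smooth, and the maximum principle (\cite[Prop.~3.75]{aubinbook}) shows $u>0$; then $g' = u^{4/(n-2)}\ga$ is a smooth basic metric with $\mathrm{scal}_{g'}$ constant, and being a basic conformal change of a metric adapting $\mathcal{F}$, it still makes $\mathcal{F}$ orbit-like by Remark~\ref{MALEX-remark-orbit-like-topological}.

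\textbf{Main obstacle.} The delicate point is the first step: producing a basic metric whose scalar curvature is basic, and verifying that the orbit-like structure genuinely survives the patched-Sasaki construction. Globally gluing local Sasaki metrics via an $\mathcal{F}$-partition of unity need not yield a metric whose curvature is literally constant along leaves unless the gluing is done carefully; one has to argue that the metric \eqref{MALEX-eq-metrica-basica-J-simetrica} is $\mathcal{F}$-invariant in the strong sense that all its curvature invariants descend to the leaf space, which uses the semi-local model (Theorem~\ref{MALEX-theorem-modelo-semi-local}) and the homothetic invariance of the linearized foliation. The subsequent variational and regularity arguments are, as the authors note, routine in the presence of Theorem~\ref{thm:kondrakov}; the conceptual weight sits in having set the stage so that the basic criticality principle and the compact embedding both apply simultaneously.
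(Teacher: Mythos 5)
Your proposal follows essentially the same route as the paper: the basic patched-Sasaki metric of \eqref{MALEX-eq-metrica-basica-J-simetrica}, constrained minimization of $J$ over $\mathbf{M}_{\cal F}$ using the Kondrakov-type compactness of Theorem~\ref{thm:kondrakov}, a Lagrange-multiplier argument absorbed by rescaling, the basic criticality principle of Proposition~\ref{MALEX-proposition-simetria-J-M}, and elliptic regularity plus the maximum principle. Your closing remark correctly isolates the one point the paper glosses over (it simply \emph{assumes} the scalar curvature of the patched metric is basic), but the overall argument is the same.
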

	\begin{proof}
 Equip $M$ with the basic metric $\ga=\langle \cdot, \cdot \rangle $ defined in Eq.~\eqref{MALEX-eq-metrica-basica-J-simetrica}.   %constructed using Lemma \ref{MALEX-lemma-particao-unidade-F-invariante}. 
We claim that there is $c\geq 0$ such that for any $c' \geq c$ there exists a Riemannian metric $\tilde \ga$ with basic scalar curvature such that 
$\mathrm{scal}_{\tilde \ga} = - c'$. As we already pointed out in Section \ref{sec:overview}, such a $\tilde \ga$ comes from a conformal change. 
Also taking in count this section, we proceed finding a basic critical point, what shall finishes the proof. 
	
	Take $c\geq 0$ such that
  \[\left(\frac{2^*}{2}\right)\min_{M}\mathrm{scal}_{\ga}\mathrm{vol}(M)^{1-2^*/2} +c\geq  0,\]
  and consider the functional
  \[J(u) =  2b_n \int_M |\nabla u|_{\ga}^2 + \dfrac{1}{2}\int_M\mathrm{scal}_{\ga}u^2 + \dfrac{c}{2^*}\int_Mu^{2^*}\]
  defined in $W^{1,2}(M)^{\cal F}$. Let us show that $J$ is coercive.
  
		 To do so, note that the H\"older inequality implies that
			\begin{align}
		%	\nonumber\left(\int_M u^2\right)^{1/2}&\leq \mathrm{vol}(M)^{1/2 - 1/{2^*}}\left(\int_Mu^{2^*}\right)^{1/{2^*}}\\
				\left(\int_M u^2\right) &\leq \mathrm{vol}(M)^{1-2/2^*}\left(\int_M u^{2^*}\right)^{2/{2^*}}. \label{eq:inequality1.5}
			\end{align}
		
		We now consider two separate cases depending if $\min \mathrm{scal}_{\ga}\leq 0$ or $\min \mathrm{scal}_{\ga}> 0$. Respectively we have 
			\begin{align*}
				J(u) &\geq 2b_n \int_M |\nabla u|_{\ga}^2 + \dfrac{1}{2}\min_M \mathrm{scal}_{\ga} \mathrm{vol}(M)^{1-2/2^*}\left(\int_M u^{2^*}\right)^{2/{2^*}} + \dfrac{c}{2^*}\int_Mu^{2^*};\\
		 J(u) &\geq 2b_n \int_M |\nabla u|_{\ga}^2 + \dfrac{c}{2^*}\int_Mu^{2^*}.
			\end{align*}
		\iffalse	According to the hypothesis
			\begin{equation}
				 \dfrac{1}{2}\min_M \mathrm{scal}_{\ga}\mathrm{vol}(M)^{1-2^*/2} + \dfrac{c}{2^*} > 0,
			\end{equation}
			from where it follows that $J$ is  coercive since if 
	\fi
	We pass to the submanifold
	\[\mathbf{M}_{\cal F} := \left\{ u \in W^{1,2}(\overline M)^{\cal F} : C\geq u \geq 0 ~\text{a.e}~, \frac{c}{2^*}\int_{\overline M}u^{2^*} = \epsilon\right\},\]
	where we have liberty on the choice of  $\epsilon$.
	We then check that $J\Big|_{\mathbf{M}_{\cal F} }$ is coercive and weakly lower semiconinutous. To do so, observe that considering the imposed restrictions one has
			\begin{align}
				\label{eq:caso1}J(u) &\geq 2b_n \int_M |\nabla u|_{\ga}^2 + \dfrac{1}{2}\min_M \mathrm{scal}_{\ga} \mathrm{vol}(M)^{1-2/2^*}\left(\frac{2^*\epsilon}{c}\right)^{2/{2^*}} + \epsilon;\\
			\label{eq:caso2}J(u) &\geq 2b_n \int_M |\nabla u|_{\ga}^2 + \epsilon.
			\end{align}
			from where it follows that $J\Big|_{\mathbf{M}_{\cal F} }$ is coercive.
			%if $\frac{2^*}{2}\min_M \mathrm{scal}_{\ga} \mathrm{vol}(M)^{1-2/2^*} + c>0$ or $c>0$, respectively. 
			
			It is also immediate to conclude that $\mathbf{M}_{\cal F}$ is weakly closed. Indeed, take $\mathbf{M}_{\cal F}\supset \{u_m\} \rightharpoonup u \in W^{1,2}(M).$ Once $\{u_m\} \subset W^{1,2}(M)^{\cal F}$ and this is a Banach space one has that $u\in W^{1,2}(M)^{\cal F}.$ According to Theorem \ref{thm:kondrakov} one has the compact embedding of $W^{1,2}(M)^{\cal F}$ in $L^{2^*}(M)$, from where it follows that $c\int_M u^{2^*} = \epsilon 2^*.$ Moreover, the sequence $\{u_m\}$ has a pointwise convergent subsequence, so that $C\geq u\geq 0$ almost everywhere. Therefore, $u\in \mathbf{M}_{\cal F}$ and $\mathbf{M}_{\cal F}$ is weakly closed.
			
			 As a last step we observe that $J\Big|_{\mathbf{M}_{\cal F} }$ is weakly lower semicontinuous since: due to Theorem \ref{thm:kondrakov}, any weakly converging sequence $\{u_m\}\subset \mathbf{M}_{\cal F}$ strongly converges in $L^p(M)\cap  \mathbf{M}_{\cal F}$ for every $p\in [1,2^*]$; moreover, once the weak convergence implies that
			\[\liminf_{m\to\infty}\|u_m\|_{1,2} \geq \|u\|_{1,2},\]
        and since $\int_M u_m^{2} \to \int_M u^{2}$ is a convergent sequence, it holds that
			\begin{equation*}
			\liminf_{m\to\infty}\|u_m\|^2_{1,2} =\int_Mu^2+ \liminf_{m\to\infty}\int_M|\nabla u_m|^2,
			\end{equation*}
			and hence
			\begin{equation*}
				\liminf_{m\to \infty} \int_M|\nabla u_m|^2 \geq \int_M|\nabla u|^2.
			\end{equation*}
		Finally, once $\mathrm{scal}_{\ga}$ is continuous and  $u\in \mathbf{M}_{\cal F}$ one concludes that
		\begin{gather*}
			\liminf_{m\to\infty}J(u_m) \geq 2b_n\int_{ M}|\nabla u|^2 +\frac{1}{2}\int_{ M}\mathrm{scal}_{\ga}u^2-\epsilon = J(u).
		\end{gather*}
		
			It then follows that the restriction $J\Big|_{\mathbf{M}_{\cal F}}$ has a minimum $u\in\mathbf{M}_{\cal F}$. Since we have obtained a critical pointed subjected to the artificial constraint $\mathbf{M}_{\cal F}$, we must proceed by looking to the corresponding Lagrange Multiplier associated to this problem. To do so, note now that if $v\in W^{1,2}(M)^{\cal F},$ the Lagrange Multiplier Theorem states that there is $\lambda \in \mathbb{R}$ such that
			\[J'(u)(v) = 4b_n\int_M \langle \nabla u,\nabla v\rangle + \int_{\overline M}\mathrm{scal}_{\ga}uv -c\int_{\overline M} u^{\gamma_n}v = \lambda c\int_{\overline M} u^{\gamma_n}v = H'(u)(v),\]
			where $H^{-1}(0) = \mathbf{M}_{\cal F}\setminus \partial \mathbf{M}_{\cal F}$. We reinforce that $u$ does not lie in $\partial \mathbf{M}_{\cal F}$ since both: we can assume that $u$ is not constant equal to $C$, otherwise it would imply that the original metric already has constant scalar curvature; moreover, the integral constraint avoid $u$ to be identically zero.
						
			We thus conclude that $u$ is a weak solution of \eqref{eq:PDE} with $c$ replaced by $c'=(1+\lambda)c$. On the other hand, by computing $J'(u)(u)$, we conclude that  $1+\lambda > 0$, thus $c'>0$. We then proceed with a rescaling of the resulting metric to obtain the right constant scalar curvature metric. Note that such a scaling is possible given the liberty in the choice of $\epsilon$ in the definiton of $\mathbf{M}_{\cal F}$.
	\end{proof}

	\subsection{Prescribing Riemannian submersion metrics on fiber bundles}
		\label{sec:bundles}

 In this section we discuss the Yamabe problem to the setting of fiber bundles whose fibers are homogeneous spaces. More precisely, we prove:

	\begin{theorem}\label{thm:bundles}
		Let $ M^n,~n\geq 3$, be a closed Riemannian manifold endowed with a Foliation $\cal F$ induced by a fiber bundle such that: 
		\begin{enumerate}[$(i)$]
		    \item The structure group $G$ is compact and has non-abelian Lie algebra;
		    \item The fiber $L$ is an homogeneous space.
		\end{enumerate}
		 Then $ M$ has a Riemannian metric of positive constant scalar curvature for which $\cal F$ is Riemannian.
	\end{theorem}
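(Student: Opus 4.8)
The plan is to run the same variational scheme used in the proof of Theorem~\ref{thm:orbitlike}, the only genuinely new input being the choice of a background metric adapted to the bundle structure. First observe that the foliation $\mathcal{F}$ by the fibres of the bundle becomes a \emph{regular} Riemannian foliation as soon as $M$ carries a bundle-like metric: all leaves have dimension $\dim L$ and the infinitesimal foliation at every point is the partition of $\nu_qL_q$ into points, i.e. the orbit decomposition of the trivial group. Hence $\mathcal{F}$ is orbit-like in the sense of Definition~\ref{MALEX-definition-orbit-like}, and the principle of symmetric criticality of Proposition~\ref{MALEX-proposition-simetria-J-M} applies. Moreover the $\mathcal{F}$-basic functions are exactly the functions constant along the fibres, so $W^{1,2}(M)^{\mathcal{F}}$ is, up to the constant fibre volume, the Sobolev space of the base; in particular Theorem~\ref{thm:kondrakov} yields a \emph{compact} embedding $W^{1,2}(M)^{\mathcal{F}}\hookrightarrow L^{2^*}(M)$ at the otherwise critical exponent $2^*=2n/(n-2)$. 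As in Section~\ref{sec:overview}, solving \eqref{eq:PDE} for a positive \emph{basic} $u$ then amounts to minimising
\[
  J(u)=2b_n\int_M|\nabla u|_{\ga}^2+\tfrac12\int_M\mathrm{scal}_{\ga}\,u^2-\tfrac{c}{2^*}\int_M u^{2^*}
\]
over $W^{1,2}(M)^{\mathcal{F}}$, provided the background metric $\ga$ is bundle-like and has \emph{basic} scalar curvature; since here we want positive constant scalar curvature, note the sign of the last term is opposite to the one used in Theorem~\ref{thm:orbitlike}.

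The core of the proof is therefore to construct a bundle-like metric $\ga$ on $M$ with basic scalar curvature which is, in addition, everywhere \emph{positive}; this is where hypotheses $(i)$ and $(ii)$ enter. Write $M=P\times_G L$ with $P\to B$ the associated principal $G$-bundle and $L=G/H$. Fix a principal connection on $P$, a metric on $B$, and a $G$-invariant metric $g_L$ on $L$, and form the associated connection metric, which is bundle-like and has totally geodesic fibres isometric to $(L,g_L)$. Since $G$ is compact with non-abelian Lie algebra (and acts almost effectively on $L$, as we may assume), its semisimple part is non-trivial and the normal homogeneous metric $g_L$ can be taken with $\mathrm{scal}_{g_L}>0$. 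Performing the canonical variation $\ga_t$ that rescales the vertical metric by $t^2$, O'Neill's formula for a totally geodesic Riemannian submersion gives
\[
  \mathrm{scal}_{\ga_t}=\mathrm{scal}_B\circ\pi+t^{-2}\mathrm{scal}_{g_L}-t^2\,|A|^2_{\ga_1},
\]
where $A$ is the integrability tensor of the horizontal distribution. The first two terms are basic (the second is a positive constant on each homogeneous fibre) and for $t$ small the dominant term $t^{-2}\mathrm{scal}_{g_L}$ forces $\mathrm{scal}_{\ga_t}>0$ on all of $M$. It then remains to render $|A|^2$ basic as well: using that every fibre is the homogeneous space $G/H$, one averages the fibrewise symmetric $2$-tensor built from the curvature form and the fundamental vector fields (equivalently, one chooses the connection and $g_L$ so that this tensor is $G$-invariant), making $|A|^2$ constant along each fibre. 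The resulting $\ga:=\ga_t$ is then bundle-like with basic, positive scalar curvature.

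With such a $\ga$ fixed, the argument is a near-verbatim repetition of the proof of Theorem~\ref{thm:orbitlike} with the signs adapted. Because $\mathrm{scal}_{\ga}>0$, on the constraint $\mathbf{M}_{\mathcal{F}}=\{u\in W^{1,2}(M)^{\mathcal{F}}:\ 0\le u\le C\text{ a.e.},\ \tfrac{c}{2^*}\int_M u^{2^*}=\epsilon\}$ one has $J(u)\ge 2b_n\int_M|\nabla u|^2-\epsilon$, so $J|_{\mathbf{M}_{\mathcal{F}}}$ is coercive; it is weakly lower semicontinuous and $\mathbf{M}_{\mathcal{F}}$ weakly closed by the compact embedding of Theorem~\ref{thm:kondrakov}, hence a minimiser $u\in\mathbf{M}_{\mathcal{F}}\setminus\partial\mathbf{M}_{\mathcal{F}}$ exists (it is neither identically $0$, by the integral constraint, nor identically $C$, else $\ga$ already had constant scalar curvature). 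The Lagrange multiplier equation, reduced via a rescaling of $\ga$ as in Section~\ref{sec:overview}, shows that $u$ weakly solves \eqref{eq:PDE} with $c$ replaced by $c'=(1+\lambda)c$; testing with $u$ gives $1+\lambda>0$. Since $\ga$ already has positive scalar curvature, its conformal class has positive Yamabe invariant, so the constant scalar curvature of $u^{4/(n-2)}\ga$ is necessarily positive. As $u$ is basic, $u^{4/(n-2)}\ga$ is again bundle-like, i.e. $\mathcal{F}$ is Riemannian for it; finally the elliptic regularity theorem and maximum principle of \cite{aubinbook} (Theorem~3.58 and Proposition~3.75) upgrade $u$ to a smooth positive function, completing the proof.

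The main obstacle is clearly the construction in the middle paragraph. O'Neill's integrability term $|A|^2$ is fibrewise non-constant for a generic connection, and forcing it to be basic is precisely the place where the homogeneity of the fibre has to be used in an essential way; dually, one must verify that the $G$-invariant fibre metric can indeed be chosen with strictly positive scalar curvature, which is where the non-abelian (and effectively acting) structure group is needed — a flat fibre would only give $\mathrm{scal}_{\ga_t}\ge 0$ and, as torus-bundle examples show, need not admit any positive scalar curvature metric at all. Everything downstream of producing this bundle-like metric of basic positive scalar curvature is the orbit-like Yamabe argument of Theorem~\ref{thm:orbitlike} run with the opposite sign.
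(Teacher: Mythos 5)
Your proposal follows essentially the same route as the paper: equip $M$ with a Riemannian submersion metric having totally geodesic homogeneous fibres of positive scalar curvature (available because $G$ is compact non-abelian, via a Cheeger deformation of a $G$-invariant fibre metric), shrink the fibres by the canonical variation so that $\mathrm{scal}_{\ga}>0$ and is basic, and then run the constrained minimisation plus symmetric-criticality argument of Theorem~\ref{thm:orbitlike} with the sign of the $u^{2^*}$ term reversed, concluding with the same regularity and bundle-like considerations. The one point where you go beyond the paper is in flagging that the O'Neill term $|A|^2$ must itself be made basic — the paper deduces basicness of $\mathrm{scal}_{\ga}$ from the constancy of $\mathrm{scal}_{\ga_L}$ alone — and that concern is legitimate, although your proposed averaging of the curvature-form tensor is only sketched and would need to be carried out in detail.
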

	
	To unify the discussion in terms of Riemannian foliations, let $\pi : L\hookrightarrow (M,\ga) \to (B,h)$ be a Riemannian fiber bundle with 
	compact structure group $G$ and total space $ M$ closed. In this scenario, the decomposition of $ M$ with respect to the Riemannian foliation induced by the fibers $\{L_x\} = \cal F$ is an example of SRF which leaves are diffeomorphic.
	
	Note that $\pi$ can always be obtained as an associated bundle construction for some principal bundle $P$: there is a $G$-manifold $P$ 
	for which the corresponding $G$-action is free and such that $M = P\times_GL$. We consider the particular case where the $G$-action on $L$ is transitive. 
	This way, $L$ can be seen as the homogeneous space $G/G_l,$ where $G_l$ is the isotropy subgroup at some $l \in L$. Hence, $L$ coincides with the orbit of $G$ through $l$.
	
%	It is straightforward to check that any smooth and basic function $f : \overline M \to \mathbb{R}$ can be uniquely related to a smooth function $f : P\times L\to \mathbb{R}$ that is invariant by the $G$-action in the second factor. 

	\begin{proof}[Proof of Theorem \ref{thm:bundles}]
	Given any $G$-invariant Riemannian metric $\ga_L$ in the fiber $L$, according to \cite[Theorem F]{cavenaghi2018positive} it follows that $\ga_L$ develops positive scalar curvature after a finite \textit{Cheeger deformation} (see \cite{ziller} for more on such deformations). This manner, for simplicity we assume that $\ga_L$ itself has positive scalar curvature. 
	
	Now consider on $ M$ the unique Riemannian submersion metric $\ga$ whose fibers are totally geodesic and isometric to $(L,\ga_L)$. Since we can shrink sufficiently the fibers by the means of a \textit{Canonical Variation} we shall assume that $\ga$ has positive scalar curvature.
	Since $\mathrm{scal}_{\ga_L}$ is $G$-invariant and $L$ is a homogeneous space it means that $\mathrm{scal}_{\ga_L}$ is constant and so $\ga$ is a metric with basic scalar curvature.

Once more we rely in the discussion presented in section \ref{sec:overview}. More precisely, by the means of a conformal change we shall prescribe $c > 0$ as the scalar curvature of a metric $\widetilde \ga = u^{4/n-2}\ga$. Therefore, without further preliminaries, let $\epsilon > 0$ to be chosen conveniently. We search for a critical point of $J$ in
		\[\mathbf{M}_{\cal F} := \left\{ u \in W^{1,2}(M)^{\cal F} : C\geq u \geq 0 ~\text{a.e}~, \frac{c}{2^*}\int_{ M}u^{2^*} = \epsilon\right\},\]
		where $C \geq \left(\dfrac{\epsilon 2^*}{c\mathrm{vol}_{\ga}( M)}\right)^{\frac{1}{2^*}}$.
		
		As we have already seen in the proof of Theorem \ref{thm:orbitlike} the manifold  $\mathbf{M}_{\cal F}$ is weakly closed and $J|_{{\mathbf{M}_{\cal F}}}$ is weakly lower semicotinuous. It only remains to prove that $J|_{\mathbf{M}_{\cal F}}$ is coercive:
	
			Note that if $u\in \mathbf{M}_{\cal F}$ then
			\begin{align}\label{eq:inequality1}
				J(u) &\geq 2b_n\int_{M}|\nabla u|_{\textsl{g}}^2 + \dfrac{\min_{M}\mathrm{scal}_{\ga}}{2}\int_{ M}u^2 - \dfrac{c}{2^*}\int_{ M}u^{2^*}\\
				&=2b_n\int_{ M}|\nabla u|_{\textsl{g}}^2 + \dfrac{\min_{ M}\mathrm{scal}_{\ga}}{2}\int_{ M}u^2 - \epsilon.
			\end{align}
			Therefore, according to  Poincar\'e inequality, since $\min_{ M} \mathrm{scal}_{\ga}\geq 0$,  $J(u) \to \infty$ if $\|u\|_{W^{1,2}(M)^{\cal F}} \to \infty$.
			\iffalse
			\item

			On the other hand, the weak convergence implies that
			\[\liminf_{m\to\infty}\|u_m\|_{1,2} \geq \|u\|_{1,2}.\]
				Moreover, since $\int_B u_m^2 \to \int_B u^2$ is a convergent sequence, one has
			\begin{equation*}
			\liminf_{m\to\infty}\|u_m\|^2_{1,2} =\int_{ M}u^2+ \liminf_{m\to\infty}\int_{ M}|\nabla u_m|^2.
			\end{equation*}
			Hence
			\begin{equation*}
				\liminf_{m\to \infty} \int_{ M}|\nabla u_m|^2 \geq \int_{ M}|\nabla u|^2.
			\end{equation*}
			Now since $\mathrm{scal}_{\ga}$ is continuous and  $u\in \mathbf{M}_{\cal F}$ one concludes that
			\begin{gather*}
				\liminf_{m\to\infty}J(u_m) \geq 2b_n\int_{ M}|\nabla u|^2 +\frac{1}{2}\int_{ M}\mathrm{scal}_{\ga}u^2-\epsilon = J(u).
			\end{gather*}
		\fi
		
		The remaining of the argument follows equaly to the ones in the proof of Theorem  \ref{thm:orbitlike}.
	
	As a lest step we argue that we can indeed obtain a Riemannian submersion metric. To do so, first note that since $u$ is a basic function, 
	then the obtained metric $u^{\frac{4}{n-2}}\ga$ makes the foliation $\cal F$ Riemannian. Moreover, since $\ga$ is a Riemannian submersion metric there is a Riemannian metric $\bar \ga$ in $ M/\cal F$ for such that $\pi_{\ast}\ga = \bar \ga$. Hence, since $u$ is basic one concludes that 
	$\pi_{\ast}(u^{\frac{4}{n-2}}\ga) = u^{\frac{4}{n-2}}\pi_{\ast}\ga = u^{\frac{4}{n-2}}\bar \ga := \textsl{h}$.
	
	Finally, the fiber bundle $L \hookrightarrow ( M,\widetilde \ga) \rightarrow ( M/\cal F,\textsl{h})$ with $\widetilde  \ga = u^{\frac{4}{n-2}}\ga$ satisfies the thesis.
		\end{proof}
		
		We finish this section with a simply application of our results to the prescription of constant scalar curvature metrics on exotic spheres.
		
		\subsubsection{Applications to bundles whose total space are exotic spheres}
				Eells and Kuiper in \cite{ek} computed the number of $7$ (respectively $15$)-exotic spheres that are realized as total spaces of sphere bundles. Therefore, by setting $G = O(n+1),~n = 7,~15, F = S^n$, considering this and the discussion in section \ref{sec:bundles}, a simply application of Theorem \ref{thm:bundles} gives:
\begin{theorem}\label{thm:grourigas}
 16 (resp. $4.096$) from the $28$ (resp. $16.256$) diffeomorphisms classes of the 
$7$-dimensional (resp. $15$)-exotic spheres admit metrics of positive constant scalar curvature. Moreover, these can be taken as Riemannian submersion metrics when such spaces are considered as the total space of sphere bundles.
\end{theorem}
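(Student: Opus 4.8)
The plan is to combine the classical realization of exotic spheres as total spaces of \emph{linear} sphere bundles --- due to Milnor and refined by Eells--Kuiper --- with Theorem~\ref{thm:bundles}, which supplies the analytic content. First I would recall from \cite{ek} that among the $28$ diffeomorphism classes of homotopy $7$-spheres exactly $16$ occur as total spaces of smooth $S^3$-bundles over $S^4$, and that among the $16.256$ diffeomorphism classes of homotopy $15$-spheres exactly $4.096$ occur as total spaces of smooth $S^7$-bundles over $S^8$. These counts are obtained via the Eells--Kuiper $\mu$-invariant, which is computable on such total spaces precisely because they bound the associated disk bundles; in both cases the bundles are the fiberwise unit-sphere bundles of rank-$(n+1)$ Euclidean vector bundles over $S^{n+1}$ classified by $\pi_{n}(SO(n+1))$, and the homotopy-sphere condition is the requirement that the Euler number be $\pm 1$.

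Second I would verify that each such bundle $S^{n}\hookrightarrow M\to S^{n+1}$ (with $n=3$ when $\dim M=7$ and $n=7$ when $\dim M=15$) is a fiber bundle meeting the two hypotheses of Theorem~\ref{thm:bundles}: the structure group may be taken to be the compact group $G=O(n+1)$ (it even reduces to $SO(n+1)$, or to still smaller groups in Milnor's construction), whose Lie algebra $\mathfrak{so}(n+1)$ is non-abelian since $n+1\geq 3$; and the fiber $S^{n}=O(n+1)/O(n)$ is a homogeneous space. Since $M$ is closed and $\dim M=n+1\in\{7,15\}\geq 3$, Theorem~\ref{thm:bundles} applies and yields on each such $M$ a Riemannian metric of positive constant scalar curvature for which the fiber foliation $\cal F$ is Riemannian. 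Inspecting the proof of Theorem~\ref{thm:bundles}, that metric has the form $\widetilde\ga=u^{4/(n-2)}\ga$ with $\ga$ a Riemannian submersion metric with totally geodesic fibers and $u$ a positive basic function, so $\widetilde\ga$ is again a Riemannian submersion metric, over the base metric $\textsl{h}=u^{4/(n-2)}\bar\ga$ on $M/\cal F$; this is exactly the ``moreover'' clause.

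The only point I expect to require some care --- and it is bookkeeping rather than analysis --- is the interface between Theorem~\ref{thm:bundles} and the diffeomorphism classification: one must know that each of the $16$ (resp.\ $4.096$) classes in question admits \emph{some} model that is a smooth linear sphere bundle with the structural data above, which is precisely what the Milnor and Eells--Kuiper constructions provide and which I would quote rather than reprove. Once one such model carries a positive constant scalar curvature submersion metric, every diffeomorphic manifold does as well by transport of structure, and the statement follows with no further input beyond Theorem~\ref{thm:bundles}.
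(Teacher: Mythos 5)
Your proof is correct and follows essentially the same route as the paper: quote the Eells--Kuiper counts of homotopy $7$- and $15$-spheres realizable as total spaces of linear sphere bundles, check that these bundles satisfy the hypotheses of Theorem~\ref{thm:bundles} (compact structure group $O(n+1)$ with non-abelian Lie algebra, homogeneous fiber $S^n$), and conclude. The only slip is the line ``$\dim M = n+1 \in \{7,15\}$,'' which should read $\dim M = 2n+1$ (fiber $S^3$ over base $S^4$, resp.\ $S^7$ over $S^8$), but this does not affect the argument.
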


%\section*{Acknowledgments}
%The authors are thankful to Jo\~ao Marcos do Ó and Llohann Speran\c{c}a for fruitful conversations.

%The first named author was supported by grant \#2016/23746-6, São Paulo Research Foundation (FAPESP)
%The second named author was supported by the SNSF-Project 200020E\_193062 and the DFG-Priority programme SPP 2026.

\bibliographystyle{alpha}
	\bibliography{main}
\end{document}